\newtheorem{theorem}{Theorem}[section]
\newtheorem{proposition}[theorem]{Proposition}
\newtheorem{corollary}[theorem]{Corollary}
\newtheorem{lemma}[theorem]{Lemma}
\theoremstyle{definition}
\newtheorem{remark}[theorem]{Remark}
\def\cB{{\mathcal B}}
\def\cD{{\mathcal D}}
\def\cS{{\mathcal S}}
\def\cC{{\mathcal C}}
\def\diag{{\rm diag}\,}
\def\tr{{\rm tr}\,}
\def\Eig(\rm Eig{\,}
\def\Eig{{\rm Eig}\,}
\def\qed{\hfill\vbox{\hrule width 6 pt\hbox{\vrule height 6 pt width 6
pt}}\medskip}
\def\cB{{\mathcal B}}
\def\IF{{\mathbb F}}
\def\IF{{\mathbb F}}
\def\diag{{\rm diag}}
\def\char{{\rm char}}
\begin{document}

\openup .84\jot

\title{Linear maps preserving product of involutions}
\author[Li]{Chi-Kwong Li}
\author[Lohan]{Tejbir Lohan}
\author[Singla]{Sushil Singla}

\makeatletter
\@namedef{subjclassname@2020}{\textup{2020} Mathematics Subject Classification}
\makeatother

\subjclass[2020]{Primary: 15A86; 20G15. Secondary: 15A23; 15A21. 
	}

\keywords{Linear preservers, matrix decompositions, product of involutions,   strongly reversible elements, bireflectional elements.}

\begin{abstract}
	An element of the algebra $M_n(\mathbb{F})$ of $n\times n$ matrices over a field $\mathbb{F}$ is called an involution if its square equals the identity matrix. Gustafson, Halmos, and Radjavi proved that any product of involutions in $M_n(\mathbb{F})$ can be expressed as a product of at most four involutions. In this article, we investigate the bijective linear preservers of the sets of products of two, three, or four involutions in $M_n(\mathbb{F})$. 
\end{abstract}

\maketitle

\section{Introduction}

Let $M_n(\IF)$ be the algebra of $n \times n$ matrices over a field $\IF$. Two matrices $A,B \in M_n(\IF)$ are said to be similar if $A = PBP^{-1}$ for some invertible matrix $P$ in $M_n(\IF)$. A matrix $A\in M_n(\IF)$ is called an involution if $A^2=I$, where $I$ denotes the identity matrix in $M_n(\IF)$. Decomposing a group element into a product of involutions has applications in various areas of mathematics, with particular emphasis on elements that can be written as products of two involutions, known as \textit{bireflectional}, \textit{strongly reversible}, or \textit{strongly real} elements; see \cite{Wo, Gu, OS}. We note that any product of involutions in $M_n(\IF)$ has determinant $\pm 1$. In \cite{GHR}, Gustafson, Halmos, and Radjavi proved that a matrix in $M_n(\IF)$ has determinant $\pm 1$ if and only if it is a product of at most four involutions. It is also known that a matrix in $M_n(\IF)$ is a product of two involutions if and only if it is similar to its inverse. Wonenburger \cite{Wo} first proved this result in the case where the characteristic of $\IF$ is not two, and the general case was later independently proved by Djokovi\'{c} \cite{Dj} and by Hoffman and Paige \cite{HP}. The characterization of products of three involutions is less well understood in the literature; see \cite{GHR, Ba, Liu} for a survey of known results.

Linear preserver problems constitute an active area of research in matrix theory and operator theory, focusing on the characterization of linear maps between algebras that preserve specific subsets, functions, or relations. These problems are of broad interest and have been extensively studied; see \cite{GLS, LP, LT} and the references therein for a detailed exposition. In \cite{LTWW}, the authors studied linear maps that preserve matrices annihilated by a fixed polynomial with distinct roots. From their results, it follows that a bijective linear preserver of the set of involutions in $M_n(\IF)$ has the form $X \mapsto \pm PXP^{-1}$ or $X \mapsto \pm PX^tP^{-1}$, where $X^t$ denotes the transpose of $X$, provided that the characteristic of the field $\IF$ is not two. For the case of characteristic two, we refer to \cite[Chapter~9]{ZTC}. In this paper, we investigate bijective linear preservers of the sets of products of involutions in $M_n(\IF)$ for $n \ge 2$.

A linear map $T: M_n(\IF) \rightarrow M_n(\IF)$ is called \textit{unital} if $T(I)=I$. Let $\cS_n := \cB_n$, $\cC_n$, or $\cD_n$, where $\cB_n$, $\cC_n$, and $\cD_n$ denote the sets of all matrices in $M_n(\IF)$ that are products of two, three, and four involutions, respectively. In the following result, we characterize the bijective linear maps from $M_n(\IF)$ to $M_n(\IF)$ that preserve the set $\cS_n$.

\begin{theorem}\label{thm-main}
	Let $\IF$ be a field with characteristic not equal to $2$. Let $T: M_n(\IF) \rightarrow M_n(\IF)$ be a bijective linear map. Then the following statements hold.
	\begin{enumerate}[label={\normalfont(\roman*)}]
		\item $T(\cB_n) \subseteq \cB_n$ if and only if there exist a scalar $\alpha \in \{-1,1\}$ and an invertible matrix $P\in M_n(\IF)$ such that $T$ has the form
		\begin{equation*}
			X \mapsto \alpha PXP^{-1} \quad \text{or} \quad X \mapsto \alpha PX^tP^{-1}.
		\end{equation*}
		
		\item $T$ is unital and $T(\cC_n) \subseteq \cC_n$ if and only if there exists an invertible matrix $P\in M_n(\IF)$ such that $T$ has the form
		\begin{equation*}
			X \mapsto PXP^{-1} \quad \text{or} \quad X \mapsto PX^tP^{-1}.
		\end{equation*}
		
		\item $T(\cD_n) \subseteq \cD_n$ if and only if there exist invertible matrices $P, Q\in M_n(\IF)$ with $\det(PQ)\in \{-1,1\}$ such that $T$ has the form
		\begin{equation*}
			X \mapsto PXQ \quad \text{or} \quad X \mapsto PX^tQ.
		\end{equation*}
	\end{enumerate}
\end{theorem}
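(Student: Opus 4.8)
The plan is to reduce, in each part, the substantive ``only if'' direction to a classical linear-preserver theorem, after first establishing workable descriptions of the three sets. Since the identity matrix is an involution, appending copies of $I$ shows $\cB_n\subseteq\cC_n\subseteq\cD_n$, that every matrix in $\cS_n$ is invertible, and that each of $\cB_n,\cC_n,\cD_n$ is closed under $X\mapsto PXP^{-1}$, under $X\mapsto X^t$ (because $(R_1\cdots R_j)^t=R_j^t\cdots R_1^t$ and the transpose of an involution is an involution) and under $X\mapsto -X$ (because $-R_1R_2\cdots R_j=(-R_1)R_2\cdots R_j$ and $(-R_1)^2=I$). By the Wonenburger--Djokovi\'c--Hoffman--Paige theorem recalled in the introduction, $\cB_n=\{A:A\text{ is similar to }A^{-1}\}$, and by Gustafson--Halmos--Radjavi \cite{GHR} together with the observation above, $\cD_n=\{A:\det A\in\{-1,1\}\}$. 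These facts yield the ``if'' directions immediately: conjugation and transposition preserve each $\cS_n$; the scalar $\pm1$ is allowed in (i) but forbidden in (ii) by unitality; and $\det(PXQ)=\det(PQ)\det X$, so the hypothesis $\det(PQ)\in\{-1,1\}$ is exactly what keeps $X\mapsto PXQ$ inside $\cD_n$.

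For the ``only if'' direction of (iii), set $q(X):=\det(T(X))$, a homogeneous polynomial of degree $n$ in the entries $x_{ij}$ of $X$, and note $q\not\equiv 0$ since $T(I)\in\cD_n$ is invertible. The hypothesis says $q(X)^2=1$ on the hypersurface $\{\det(X)^2=1\}$. Extending scalars to $\overline{\IF}$ and using that $\mathrm{SL}_n$ and its $\det=-1$ coset have Zariski-dense sets of $\IF$-rational points when $\IF$ is infinite, $q(X)^2-1$ vanishes on the reduced hypersurface $\det(X)^2-1=0$ — here $\mathrm{char}\,\IF\ne2$ ensures $\det-1$ and $\det+1$ are distinct irreducibles, so $\det(X)^2-1$ is squarefree — hence $\det(X)^2-1$ divides $q(X)^2-1$; comparing the top-degree and the constant terms forces $q(X)^2=\det(X)^2$, and since $\det$ is irreducible in the unique factorization domain $\IF[x_{ij}]$ we get $\det(T(X))=\pm\det(X)$. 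In particular $T$ maps the singular matrices into, hence (being bijective) onto, themselves, so Dieudonn\'e's theorem on linear preservers of singular matrices gives $T(X)=PXQ$ or $T(X)=PX^tQ$; evaluating $T(\cD_n)\subseteq\cD_n$ at $X=I$ then gives $\det(PQ)\in\{-1,1\}$. When $\IF$ is finite of odd characteristic the density step fails and must be replaced by a separate argument showing that the bijection $T$ already permutes the singular matrices.

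For the ``only if'' direction of (i), the plan is to show that $T$ carries every involution to an involution and then to invoke the classification of bijective linear preservers of the set of involutions deduced from \cite{LTWW} and recalled in the introduction, which gives precisely $X\mapsto\pm PXP^{-1}$ or $X\mapsto\pm PX^tP^{-1}$. To see that involutions are singled out inside $\cB_n$ by conditions a linear bijection must respect, I would push one-parameter families through $T$: for an involution $R$ with eigenvalue multiplicities $(k,n-k)$ the pencil $aI+bR$ is diagonalizable with eigenvalues $a\pm b$ of those multiplicities, and together with the families $P\,\diag(t,t^{-1},1,\dots,1)\,P^{-1}$ and $P(I+N)P^{-1}$ with $N$ a rank-one nilpotent — all lying in $\cB_n$ — these configurations determine $R$ up to sign and conjugacy; the linearity and bijectivity of $T$ force it to preserve the combinatorial type of each such configuration, which pins down its form. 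For (ii), the plan is analogous: since $\cC_n\subseteq\mathrm{GL}_n$, the map $T$ already sends the large set $\cC_n$ into the invertible matrices, and the goal is to upgrade this to ``$T$ preserves invertibility'' by pushing the $\cB_n$-families above (all contained in $\cC_n$) — together with the scalar families that are known to meet $\cC_n$, for instance $cI_n\in\cC_n$ when $n$ is even and $c^2=-1$ — through $T$ to determine its action on a spanning set and exclude that a singular matrix is hit; once $T$ preserves invertibility, Dieudonn\'e's theorem gives $T(X)=PXQ$ or $T(X)=PX^tQ$, and the unital hypothesis $T(I)=I$ forces $PQ=I$, i.e.\ $Q=P^{-1}$, which is the asserted form.

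The main obstacle is part (ii). Whereas $\cB_n$ (``similar to its inverse'') and $\cD_n$ (``$\det=\pm1$'') admit transparent invariant-theoretic descriptions, the set $\cC_n$ of products of three involutions has no such description in the literature, so (ii) cannot be approached head-on; one must extract from mere membership in $\cC_n$ just enough of the rigid ``$\cB_n$-skeleton'' — conjugates of unipotents, of $\diag(t,t^{-1})\oplus I$, and of sign patterns — to force the invertibility-preserving property and hence the standard form. Making this extraction precise, and separately settling the finite-field case of the determinant argument in (iii), is where the bulk of the technical work lies.
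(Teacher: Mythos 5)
Your proposal has genuine gaps in all three parts, and it misses the single idea the paper's proof is built on. The paper's engine is the observation that $I+tN\in\cB_n\subseteq\cS_n$ for every nilpotent $N$ and every $t\in\IF$; combined with the polynomial criterion $\det(I+tN)\in\{\pm1\}$ for all $t$ $\Rightarrow$ $N$ nilpotent, this shows $X\mapsto T(I)^{-1}T(X)$ preserves the nilpotent cone, and the Botta--Pierce--Watkins/Serežkin theorem then gives $T(X)=rT(I)PXP^{-1}$ or $rT(I)PX^tP^{-1}$ on $\mathfrak{sl}_n(\IF)$ over an \emph{arbitrary} field. All three parts are then finished by pinning down $r$ and $T(I)$ (and, when $\mathrm{char}(\IF)\mid n$, by a separate argument on the $E_{i,i}$). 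Your route never invokes a nilpotent- or singularity-preserver theorem in a way that actually closes the argument: for (iii) your Zariski-density derivation of $\det(T(X))=\pm\det(X)$ is fine over infinite fields, but you explicitly leave the finite-field case open, and the theorem is asserted for all fields of odd characteristic (including $\IF_3,\IF_5,\dots$, where $\cD_n$ is a finite set and the polynomial-identity step genuinely fails).

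The more serious problems are in (i) and (ii). For (i), the claim that a bijective linear preserver of $\cB_n$ must map involutions to involutions is essentially as hard as the theorem itself, and the mechanism you propose does not work as stated: the pencil $aI+bR$ for an involution $R$ with eigenvalue multiplicities $(k,n-k)$ is \emph{not} contained in $\cB_n$ for all $a,b$ (a diagonalizable matrix with eigenvalues $a\pm b$ lies in $\cB_n$ only when its spectrum is closed under inversion with matching multiplicities), and the family $P\diag(t,t^{-1},1,\dots,1)P^{-1}$ is not an affine line, so linearity of $T$ gives you no control over its image. The only affine families fully inside $\cB_n$ are of the form $\pm I+tN$ with $N$ nilpotent --- which is exactly what leads back to the nilpotent-preserver route. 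For (ii), "upgrade to invertibility-preserving and apply Dieudonné" is a plan, not a proof; note also that even after obtaining $T(X)=PXQ$ on all of $M_n(\IF)$ with $T(I)=I$, one still has to rule out non-unimodular $P$ (the paper does this through Lemmas 3.6--3.8, using explicit elements of $\cC_n$ such as $I_2+X_0$ and $I_3+Y_0$ and Ballantine's results to force $r=1$). As you yourself concede in the final paragraph, the extraction of the "$\cB_n$-skeleton" and the finite-field case are unresolved; those are precisely the places where the paper's proof does its real work, so the proposal cannot be considered a proof of the theorem.
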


As a direct consequence of the above theorem, all bijective unital linear preservers have the same form for $\cB_n$, $\cC_n$, and $\cD_n$. More precisely, we obtain the following result.

\begin{corollary}\label{cor-main}
	Let $\IF$ be a field with characteristic not equal to $2$. Let $T: M_n(\IF) \rightarrow M_n(\IF)$ be a unital bijective linear map. Then $T(\cS_n) \subseteq \cS_n$ if and only if there exists an invertible matrix $P\in M_n(\IF)$ such that $T$ has the form
	$$X \mapsto PXP^{-1} \quad \text{or} \quad X \mapsto PX^tP^{-1}.$$
\end{corollary}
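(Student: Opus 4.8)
The plan is to deduce the corollary directly from Theorem~\ref{thm-main}: the substantive content is already contained in that theorem, so all that remains is to normalize each of its three cases using the hypothesis $T(I)=I$ and to verify the (routine) converse.

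For the converse direction, I would first record that for any invertible $P\in M_n(\IF)$ the maps $X\mapsto PXP^{-1}$ and $X\mapsto PX^tP^{-1}$ are bijective, unital, and preserve each of the sets $\cB_n$, $\cC_n$, $\cD_n$. Indeed, if $A^2=I$ then $(PAP^{-1})^2=PA^2P^{-1}=I$ and $(A^t)^2=(A^2)^t=I$, so conjugation and transposition each preserve the set of involutions; since conjugation is multiplicative and transposition is anti-multiplicative, a product $A_1A_2\cdots A_k$ of $k$ involutions is sent to $(PA_1P^{-1})\cdots(PA_kP^{-1})$, respectively to $(PA_k^tP^{-1})\cdots(PA_1^tP^{-1})$, which is again a product of $k$ involutions. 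Taking $k=2,3,4$ gives the claim, and unitality and bijectivity are clear.

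For the forward direction, suppose $T$ is unital, bijective, and $T(\cS_n)\subseteq\cS_n$, and split into the three cases. If $\cS_n=\cB_n$, then Theorem~\ref{thm-main}(i) provides $\alpha\in\{-1,1\}$ and an invertible $P$ with $T(X)=\alpha PXP^{-1}$ for all $X$, or $T(X)=\alpha PX^tP^{-1}$ for all $X$; evaluating at $X=I$ gives $I=T(I)=\alpha I$, so $\alpha=1$, which is exactly the asserted form. If $\cS_n=\cC_n$, the statement coincides with Theorem~\ref{thm-main}(ii), whose hypothesis already includes unitality. If $\cS_n=\cD_n$, then Theorem~\ref{thm-main}(iii) provides invertible $P,Q$ with $\det(PQ)\in\{-1,1\}$ such that $T(X)=PXQ$ for all $X$, or $T(X)=PX^tQ$ for all $X$; since $I^t=I$, in either case $I=T(I)=PQ$, so $Q=P^{-1}$ (whence $\det(PQ)=1$, consistent with the constraint), giving $T(X)=PXP^{-1}$ or $T(X)=PX^tP^{-1}$.

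As for the difficulty: there is essentially no obstacle here, since Theorem~\ref{thm-main} does all the heavy lifting and the corollary is pure bookkeeping. The only point deserving a moment's care is the case $\cS_n=\cD_n$, where one uses $I^t=I$ to see that the unitality condition reads $PQ=I$ in the transpose branch just as in the non-transpose branch. I would therefore keep the write-up short, presenting it plainly as the unital specialization of the main theorem.
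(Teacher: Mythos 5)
Your proof is correct and matches the paper's intent exactly: the paper presents this corollary as a direct consequence of Theorem~\ref{thm-main}, and your case-by-case normalization using $T(I)=I$ (forcing $\alpha=1$ in case (i) and $Q=P^{-1}$ in case (iii)) is precisely the intended bookkeeping.
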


We will prove Theorem~\ref{thm-main} in Section~\ref{proofs}. We note that, unlike $\cB_n$ and $\cD_n$, Theorem~\ref{thm-main} addresses only the bijective unital preservers of $\cC_n$. However, when $\cC_n=\cD_n$, Theorem~\ref{thm-main}(iii) can be applied to characterize the bijective linear preservers of $\cC_n$. The cases in which $\cC_n=\cD_n$ are characterized in \cite[Fact~5]{Ba} (see also Remark~\ref{rem_C_n}). In Section~\ref{sec-related-results-part-1}, we study bijective linear maps that preserve $\cC_n$ and show in Theorem~\ref{thm-non-unital-C_n} that such maps have the form 
$X \mapsto PXQ$ or $X \mapsto PX^tQ$ for some invertible matrices $P, Q \in M_n(\IF)$ such that $PQA \in \cC_n$ whenever $A \in \cC_n$. We then use Theorem~\ref{thm-non-unital-C_n} to describe the bijective linear preservers of $\cC_3$ in the case $\cC_3 \subsetneq \cD_3$; see Theorem~\ref{thm-prod-3-not2}.

Note that in Theorem~\ref{thm-main} we assume that the characteristic of the field is not $2$. When the field has characteristic $2$, however, the characterization of linear preservers of products of involutions becomes more delicate. In Section~\ref{sec-related-results-part-2}, we classify the bijective linear preservers of $\cS_2$ and $\cS_3$ over fields of characteristic $2$; see Theorems~\ref{thm-main-2} and~\ref{thm-main-order-3-char-2}. Finally, we conclude the paper by discussing potential directions for future research in Section~\ref{sec-remarks}.

%The paper is organized as follows. In Section~\ref{section2}, we present preliminary results on products of involutions. In Section~\ref{proofs}, we prove the main theorem. Section~\ref{sec-related-results} investigates non-unital bijective linear preservers of $\cC_n$ and discusses the cases $n=2,3$ when the field has characteristic $2$. In Section~\ref{sec-remarks}, we outline several directions for further research.

\section{Preliminaries}\label{section2}

This section introduces the required notation and discusses several preliminary results. Let $\IF$ be a field, and let $\mathrm{char}(\IF)$ denote its characteristic. Recall that $M_n(\IF)$ denotes the algebra of $n \times n$ matrices over $\IF$. Let $E_{i,j}\in M_n(\IF)$ denote the matrix with $1$ in position $(i,j)$ and zeros elsewhere. For $A\in M_n(\IF)$, let $\det(A)$ denote the determinant of $A$. The direct sum $A \oplus B$ of two matrices $A \in M_m(\IF)$ and $B \in M_n(\IF)$ is the block diagonal matrix in $M_{m+n}(\IF)$ defined by
$
A \oplus B :=
\begin{psmallmatrix}
	A & 0 \\
	0 & B
\end{psmallmatrix}.
$

Let $f(x)= x^n + a_{n-1}x^{n-1} + \dots + a_1 x + a_0$ be a monic polynomial of degree $n$ over $\IF$, where $a_i \in \IF$ for all $0\le i \le n-1$. The companion matrix $C_f \in M_n(\IF)$ associated with the monic polynomial $f$ is defined by
\begin{equation*}
	C_{f} :=
	\begin{psmallmatrix}
		0      & 0          & \cdots & 0 & -a_0  \\
		1      & 0           & \cdots & 0 & -a_1 \\
		0      & 1           & \cdots & 0 & -a_2 \\
		\vdots & \vdots  & \smash{\ddots} & \vdots & \vdots \\
		0   & 0   &  \cdots  & 1 & -a_{n-1}
	\end{psmallmatrix}
	\quad \text{for } n \ge 2,
	\quad \text{and} \quad
	C_{f}:= -a_0 \quad \text{for } n=1.
\end{equation*}

The following result recalls the well-known rational canonical form for matrices in \(M_n(\mathbb{F})\).
%; see \cite[Theorem 9.32]{Zhan} for a reference.

\begin{proposition}
	For every \(A \in M_n(\mathbb{F})\), there exists an invertible matrix 
	\(P \in M_n(\mathbb{F})\) such that
	\begin{equation}\label{eq-rational-form}
		PAP^{-1} = C_{f_1} \oplus C_{f_2} \oplus \cdots \oplus C_{f_r},
	\end{equation}
	where each \(f_i(x)\) is a positive power of a monic irreducible polynomial over \(\mathbb{F}\), and \(C_{f_i}\) denotes the companion matrix associated with \(f_i(x)\). Moreover, the decomposition \eqref{eq-rational-form} is uniquely determined by \(A\) up to a permutation of the companion matrices.
\end{proposition}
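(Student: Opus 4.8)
The plan is to deduce the statement from the structure theorem for finitely generated modules over a principal ideal domain, applied to the polynomial ring $\mathbb{F}[x]$. First I would regard $V := \mathbb{F}^n$ as an $\mathbb{F}[x]$-module by letting $x$ act as $A$, that is, $p(x)\cdot v := p(A)v$ for $p\in\mathbb{F}[x]$ and $v\in\mathbb{F}^n$. This module is finitely generated over $\mathbb{F}[x]$ --- already the standard basis generates it over $\mathbb{F}\subseteq\mathbb{F}[x]$ --- and it is torsion: by the Cayley--Hamilton theorem the characteristic polynomial $\chi_A$ satisfies $\chi_A(A)=0$, so $\chi_A$ annihilates every element of $V$. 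Since $\mathbb{F}[x]$ is a PID, the primary-decomposition form of the structure theorem then yields an $\mathbb{F}[x]$-module isomorphism $V \cong \bigoplus_{i=1}^{r} \mathbb{F}[x]/(f_i)$, where each $f_i$ is a positive power of a monic irreducible polynomial (an elementary divisor of $V$) and the multiset $\{f_1,\dots,f_r\}$ is uniquely determined by $V$.

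Next I would translate this back into linear algebra. On a cyclic summand $\mathbb{F}[x]/(f)$ with $\deg f = d$ and $f = x^d + a_{d-1}x^{d-1}+\cdots+a_0$, the cosets of $1,x,\dots,x^{d-1}$ form an $\mathbb{F}$-basis, and multiplication by $x$ sends the coset of $x^{j}$ to that of $x^{j+1}$ for $j<d-1$ and the coset of $x^{d-1}$ to $-(a_0 + a_1 x + \cdots + a_{d-1}x^{d-1})$; in this ordered basis, multiplication by $x$ is represented exactly by the companion matrix $C_f$. Concatenating these bases over the $r$ summands and transporting them through the isomorphism above produces an ordered basis of $\mathbb{F}^n$ in which $A$ acts as $C_{f_1}\oplus\cdots\oplus C_{f_r}$; taking $P$ to be the (invertible) change-of-basis matrix then gives \eqref{eq-rational-form}.

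For uniqueness I would argue in the reverse direction: starting from any block-diagonal companion form $C_{g_1}\oplus\cdots\oplus C_{g_s}$ similar to $A$, the $\mathbb{F}[x]$-module it induces on $\mathbb{F}^n$ decomposes as $\bigoplus_{j} \mathbb{F}[x]/(g_j)$, since a companion block $C_g$ gives a cyclic module with annihilator $(g)$ and a direct sum of matrices gives a direct sum of modules; hence $\{g_1,\dots,g_s\}$ is precisely the multiset of elementary divisors of $V$. As similar matrices induce isomorphic $\mathbb{F}[x]$-modules, any two decompositions of $A$ of the form \eqref{eq-rational-form} must have the same multiset of blocks, i.e.\ agree up to permutation.

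The step I expect to require the most care is this last identification --- verifying that the module attached to a companion block is cyclic with the expected annihilator, and that $\oplus$ of matrices matches $\oplus$ of modules --- because once that is in hand, uniqueness is inherited verbatim from the uniqueness of elementary divisors in the structure theorem. The remaining steps are routine bookkeeping; alternatively, the same decomposition can be extracted by computing the Smith normal form of $xI - A$ over $\mathbb{F}[x]$ and applying the Chinese Remainder Theorem to split the invariant factors into prime powers. I would cite a standard graduate algebra reference for the structure theorem itself rather than reproving it.
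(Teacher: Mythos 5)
Your proof is correct: it is the standard derivation of the primary (elementary divisor) rational canonical form from the structure theorem for finitely generated torsion modules over the PID $\mathbb{F}[x]$, including the right translation between cyclic summands $\mathbb{F}[x]/(f)$ and companion blocks $C_f$, and the reverse identification needed for uniqueness. The paper states this proposition without proof, recalling it as a classical fact, so your argument is exactly the one a standard reference would supply and there is no divergence to report.
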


A matrix \(A \in M_n(\mathbb{F})\) is called \emph{indecomposable} if it is not similar to a nontrivial direct sum of two matrices. Equivalently, we note that \(A\) is indecomposable if and only if it is similar to the companion matrix \(C_{f^k}\) of \(f(x)^k\), where \(f(x) \in \mathbb{F}[x]\) is an irreducible polynomial and \(k \ge 1\).

Note that both the characteristic polynomial and the minimal polynomial of $C_f$ coincide and are given by $\det(\lambda I - C_f)=f(\lambda)$. Since $\det(C_f)=(-1)^n a_0$, the matrix $C_f$ is invertible if and only if $a_0\neq0$. Moreover, if $a_0\neq0$, then
\begin{equation*}
	C_{f}^{-1} =
	\begin{psmallmatrix}
		- a_0^{-1} a_1   & 1      & 0      & \cdots & 0 \\
		- a_0^{-1} a_2   & 0      & 1      & \cdots & 0 \\
		\vdots & \vdots & \vdots & \smash{\ddots} & \vdots \\
		- a_0^{-1} a_{n-1}  & 0      & 0      & \cdots & 1 \\
		- a_0^{-1}  & 0   & 0 &  \cdots  & 0
	\end{psmallmatrix}.
\end{equation*}

This motivates the definition of the \textit{reciprocal} $\widetilde{f}(x)$ of a monic polynomial $f(x)$ with nonzero constant term. Let $f(x)=x^n+a_{n-1}x^{n-1}+\dots+a_1x+a_0$ be a monic polynomial over $\IF$ with $a_0\neq0$. The reciprocal polynomial $\widetilde{f}(x)$ is defined by
\[
\widetilde{f}(x):=a_0^{-1}x^n f(x^{-1})
= x^n + a_0^{-1}a_1x^{n-1} + \dots + a_0^{-1}a_{n-1}x + a_0^{-1}.
\]

A monic polynomial $f(x)\in\IF[x]$ with nonzero constant term is called \textit{self-reciprocal} (or \textit{symmetric}) if $f(x)=\widetilde{f}(x)$; otherwise, it is called \textit{non-self-reciprocal}. Moreover, for any $k\ge1$, the polynomial $f(x)^k$ is self-reciprocal if and only if $f(x)$ is self-reciprocal.

Note that $\widetilde{f}(x)$ is monic, and $\widetilde{f}(\alpha^{-1})=0$ if and only if $f(\alpha)=0$ for any nonzero $\alpha\in\IF$. Furthermore, $f(x)=\widetilde{f}(x)$ if and only if $a_0=a_0^{-1}$ and $a_{n-k}=a_0^{-1}a_k$ for all $1\le k\le n-1$.

Let $\tau_n=[\tau_{i,j}]$ be the involution in $M_n(\IF)$ defined by $\tau_{i,j}=1$ if $i+j=n+1$ and $\tau_{i,j}=0$ otherwise. Then, for a monic polynomial $f(x)=x^n+a_{n-1}x^{n-1}+\dots+a_1x+a_0$ over $\IF$ with $a_0\neq0$, we have $\tau_n C_f^{-1}\tau_n^{-1}=C_{\widetilde{f}}$, where $C_{\widetilde{f}}$ is the companion matrix corresponding to the reciprocal polynomial $\widetilde{f}(x)$. Hence,
\begin{equation*}
	\begin{psmallmatrix}
		& \tau_n  \\
		\tau_n &
	\end{psmallmatrix}
	\begin{psmallmatrix}
		C_f & 0 \\
		0 & C_{\widetilde{f}}
	\end{psmallmatrix}^{-1}
	\begin{psmallmatrix}
		& \tau_n  \\
		\tau_n &
	\end{psmallmatrix}^{-1}
	=
	\begin{psmallmatrix}
		C_f & 0 \\
		0 & C_{\widetilde{f}}
	\end{psmallmatrix}.
\end{equation*}

This shows that the matrix $\begin{psmallmatrix} C_f & 0 \\ 0 & C_{\widetilde{f}} \end{psmallmatrix}$ belongs to $\cB_{2n}$, where $n$ is the degree of the polynomial $f(x)$. The following result, which follows from \cite[Lemma~1]{HP}, classifies the elements of $\cB_n$; see also \cite[p.~244]{Gu}.

\begin{proposition}\label{prop-rev-class}
	Let $A\in M_n(\IF)$ be an invertible matrix. Then $A \in \cB_n$ if and only if, up to similarity, the rational canonical form of $A$ has the form
	\begin{equation}\label{eq-rev-class}
		A = B \oplus C,
	\end{equation}
	where $B=\bigoplus_{i=1}^{s} C_{f_i}$ and $C=\bigoplus_{j=1}^{t}\big(C_{g_j}\oplus C_{\widetilde{g_j}}\big)$ such that each $f_i$ is a power of a self-reciprocal irreducible monic polynomial and each $g_j$ is a power of a non-self-reciprocal irreducible monic polynomial.
\end{proposition}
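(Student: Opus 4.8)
The plan is to reduce the whole statement to the characterization recalled in the introduction — that an invertible $A\in M_n(\IF)$ lies in $\cB_n$ if and only if $A$ is similar to $A^{-1}$ (Wonenburger, and in general Djokovi\'c and Hoffman--Paige; this is essentially \cite[Lemma~1]{HP}) — and then to translate this similarity condition into the language of companion blocks by invoking the uniqueness of the rational canonical form.

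First I would record the elementary bookkeeping for the reciprocal operation. From the formula $\widetilde f(x)=a_0^{-1}x^nf(x^{-1})$ one checks that $\widetilde{\cdot}$ is multiplicative on monic polynomials with nonzero constant term, that $\widetilde{\widetilde f}=f$, and hence that $p\mapsto\widetilde p$ is an involution on the set of monic irreducible polynomials different from $x$; in particular $\widetilde{p^{\,e}}=(\widetilde p)^{\,e}$ is again a positive power of a monic irreducible polynomial. Combining this with the identity $\tau_nC_f^{-1}\tau_n^{-1}=C_{\widetilde f}$ from the preliminaries shows the following: if the rational canonical form of the invertible matrix $A$ is $\bigoplus_k C_{p_k^{e_k}}$ (so each $p_k$ is monic irreducible with $p_k\neq x$, since $A$ is invertible), then $\bigoplus_k C_{(\widetilde{p_k})^{e_k}}$ is a rational canonical form of $A^{-1}$. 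By uniqueness, $A$ is similar to $A^{-1}$ if and only if the multiset $\{p_k^{e_k}\}_k$ coincides with the multiset $\{(\widetilde{p_k})^{e_k}\}_k=\{\widetilde{p_k^{e_k}}\}_k$.

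Next comes the combinatorial step. Split the blocks according to whether $p_k$ is self-reciprocal; by the fact (recorded in the preliminaries) that $f^k$ is self-reciprocal precisely when $f$ is, this is the same as splitting according to whether $p_k^{e_k}$ is self-reciprocal. The operation $h\mapsto\widetilde h$ fixes every self-reciprocal block and carries the sub-multiset of non-self-reciprocal blocks bijectively onto itself with no fixed points, so once the total multiset is $\widetilde{\cdot}$-invariant, this sub-multiset decomposes (counted with multiplicity) into two-element orbits $\{g,\widetilde g\}$. Taking the $f_i$ to be the self-reciprocal blocks and grouping the remaining blocks into such pairs gives exactly the asserted form $A=B\oplus C$. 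For the converse, if $A$ already has this form then $C_{f_i}^{-1}\sim C_{\widetilde{f_i}}=C_{f_i}$ because $f_i$ is self-reciprocal, and $(C_{g_j}\oplus C_{\widetilde{g_j}})^{-1}\sim C_{\widetilde{g_j}}\oplus C_{g_j}\sim C_{g_j}\oplus C_{\widetilde{g_j}}$; hence $A\sim A^{-1}$ and $A\in\cB_n$. (One may instead argue the converse concretely: the pair blocks are products of two involutions by the explicit identity displayed just before the proposition, and for a self-reciprocal block the relation $\tau_{\deg f_i}C_{f_i}\tau_{\deg f_i}^{-1}=C_{f_i}^{-1}$ shows that $C_{f_i}\tau_{\deg f_i}$ and $\tau_{\deg f_i}$ are involutions whose product is $C_{f_i}$.)

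I do not expect a serious obstacle here: the substantive input is the theorem "$A\in\cB_n\iff A\sim A^{-1}$'', and everything else is translating a similarity invariant through the rational canonical form, plus the easy observation that $\widetilde{\cdot}$ preserves irreducibility and self-reciprocality. The one point requiring care is the multiset argument in the forward direction — one must use that $\widetilde{\cdot}$ sends the sub-multiset of non-self-reciprocal powers to itself and respects multiplicities — but this is routine. A genuinely self-contained treatment would instead need to reprove the hard implication (an invertible matrix similar to its inverse is a product of two involutions), which is why the statement is phrased as a consequence of \cite[Lemma~1]{HP}; I would simply quote that result.
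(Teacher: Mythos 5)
Your argument is correct and follows exactly the route the paper intends: the paper gives no written proof of this proposition but derives it from \cite[Lemma~1]{HP} (the criterion that an invertible matrix is a product of two involutions if and only if it is similar to its inverse), together with the identity $\tau_n C_f^{-1}\tau_n^{-1}=C_{\widetilde f}$ recorded in the preliminaries. Your multiset/uniqueness argument for translating $A\sim A^{-1}$ into the pairing of companion blocks is the standard bookkeeping the paper leaves implicit, and it is carried out correctly.
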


A matrix $A \in M_n(\IF)$ is called \textit{nilpotent} if there exists an integer $k\ge1$ such that $A^k=O$, where $O$ denotes the $n\times n$ zero matrix. Let $\mathcal N$ denote the set of all nilpotent matrices in $M_n(\IF)$. By Proposition~\ref{prop-rev-class}, we have $\pm I+tN\in\cB_n$ for all $t\in\IF$ and $N\in\mathcal N$.

We now establish structural results concerning linear preservers of $\cB_n$. In the following results, for certain matrices $A\in\cB_n$, we construct matrices $X\in\cB_n$ such that $AX\notin\cB_n$. These constructions are used in Lemma~\ref{lem-key-nonunital-B} to show that, under additional assumptions, any bijective linear preserver of $\cB_n$ must satisfy $T(I)=\pm I$. This constitutes a key step in the proof of Theorem~\ref{thm-main}(i).

\begin{lemma}\label{lem-special-nil-comp-type2}
	Let $n \ge 2$ and let $g(x)$ be a monic polynomial of degree $n$ with nonzero constant term that is not a power of a self-reciprocal irreducible polynomial. Suppose that $A= C_g \oplus C_{\widetilde g} \in \cB_{2n}$, where $C_g$ and $C_{\widetilde g}$ are the companion matrices of $g(x)$ and $\widetilde g(x)$, respectively. Then there exists a strictly upper triangular matrix $N$ such that $A(I+N)\notin \cB_{2n}$.
\end{lemma}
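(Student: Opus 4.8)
The plan is to exploit the fact that $A = C_g \oplus C_{\widetilde g}$ has two "paired" indecomposable blocks whose characteristic polynomials are reciprocals of one another but are genuinely different (since $g$ is not a power of a self-reciprocal irreducible). By Proposition~\ref{prop-rev-class}, an invertible matrix lies in $\cB_{2n}$ only if its rational canonical form splits into self-reciprocal blocks together with reciprocal pairs; equivalently, the list of elementary divisors is invariant under the reciprocal operation $f(x) \mapsto \widetilde f(x)$ counted with multiplicity. The strategy is therefore to choose a strictly upper triangular $N$ so that $A(I+N)$ is similar to an \emph{indecomposable} matrix of size $2n$ whose characteristic polynomial is $h(x)$ with $h \ne \widetilde h$; such a matrix cannot be a product of two involutions, because its single elementary divisor $h(x)$ is not paired with $\widetilde h(x)$ in the (trivial) decomposition, and it is too large to be a single self-reciprocal block unless $h = \widetilde h$.

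The key steps I would carry out are as follows. First, write $I + N$ in block form $\begin{psmallmatrix} I + N_1 & M \\ 0 & I + N_2 \end{psmallmatrix}$ with $N_1, N_2$ strictly upper triangular of size $n$ and $M$ an arbitrary $n\times n$ matrix (the block below the diagonal is forced to be zero by strict upper triangularity, using that $C_g$ occupies the top-left $n\times n$ block; one must be slightly careful about how the blocks interleave, but choosing $N$ supported on the off-diagonal block and nowhere else gives $I+N = \begin{psmallmatrix} I & M \\ 0 & I\end{psmallmatrix}$ with $M$ strictly upper triangular as a whole, which is the cleanest choice). Then $A(I+N) = \begin{psmallmatrix} C_g & C_g M \\ 0 & C_{\widetilde g}\end{psmallmatrix}$. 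Second, I would pick $M$ to be a single well-placed entry, e.g. $M = c\,E_{1,n}$ for a suitable scalar $c$ (this keeps $N$ strictly upper triangular in the full $2n\times 2n$ matrix provided the indexing is arranged so that this entry sits strictly above the diagonal), and compute the characteristic polynomial of $A(I+N)$: since the matrix is block upper triangular, $\det(\lambda I - A(I+N)) = g(\lambda)\widetilde g(\lambda)$ regardless of $M$, so the characteristic polynomial is unchanged. The real work is the third step: showing that for an appropriate $M$ the matrix $A(I+N)$ fails the \emph{elementary divisor} condition of Proposition~\ref{prop-rev-class}, not merely the characteristic polynomial condition. Because $C_g$ and $C_{\widetilde g}$ are nonderogatory with coprime-or-equal... here actually $g$ and $\widetilde g$ may share irreducible factors, so I would first reduce to the case where $g$ is a power $p(x)^k$ of a single irreducible $p$ with $p \ne \widetilde p$ (the general non-self-reciprocal case follows by isolating one such non-self-reciprocal block of $g$, or by a direct-sum reduction). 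In that reduced case $\gcd(g, \widetilde g) = 1$, and I would show that by choosing $M$ to create a nonzero "coupling" between the two blocks, $A(I+N)$ becomes similar to the companion matrix $C_{g\widetilde g}$ — i.e., it becomes nonderogatory. Concretely: the vector $e_1$ (first standard basis vector, cycled by $C_g$) together with the bottom block generates the whole space under $A(I+N)$ once $M$ links them, so the minimal polynomial jumps up to $g(\lambda)\widetilde g(\lambda)$, which equals the characteristic polynomial; hence $A(I+N)$ is indecomposable with elementary divisor $g\widetilde g \ne \widetilde{(g\widetilde g)}$ — wait, $g\widetilde g$ \emph{is} self-reciprocal, so indecomposability alone is not the obstruction.

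Let me correct the third step, as this is the main obstacle and the point requiring care. The quantity that must be violated is not self-reciprocity of the characteristic polynomial (which $g\widetilde g$ satisfies) but the finer invariant-factor/elementary-divisor pairing. The right target is: choose $M$ so that $A(I+N)$ has a \emph{single} invariant factor equal to $g(\lambda)\widetilde g(\lambda) = p(\lambda)^k \widetilde p(\lambda)^k$, whose elementary divisors are $p(\lambda)^k$ and $\widetilde p(\lambda)^k$ — this \emph{is} a valid reciprocal pair, so that still lands in $\cB_{2n}$. So instead I would arrange for the coupling to be "asymmetric": pick $g = p^2$ (degree $n = 2\deg p$, the smallest interesting case) and choose $M$ so that $A(I+N)$ has elementary divisors $p(\lambda)^3$ and $\widetilde p(\lambda)$, or $p(\lambda), p(\lambda)^2, \widetilde p(\lambda)^2$ — some partition of the characteristic polynomial into $p$-powers and $\widetilde p$-powers that is \emph{not} invariant under swapping $p \leftrightarrow \widetilde p$. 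That this is achievable by a strictly-upper-triangular perturbation $N$ is the heart of the lemma. I would prove it by an explicit construction: in the basis adapted to $C_g \oplus C_{\widetilde g}$, a strictly upper triangular $M$ supported on entries that feed the "top" of the $C_{\widetilde g}$ cycle into the "interior" of the $C_g$ cycle changes the module structure of $\IF[x]^{2n}$ over $\IF[x]$ (acting via $A(I+N)$); I would exhibit one such $M$ making the $\widetilde p$-primary component strictly smaller (e.g. of dimension $\deg p$ instead of $2\deg p$) while the $p$-primary component correspondingly grows, so the elementary divisor list becomes $\{p^3, \widetilde p\}$ (when $k=2$), which is not reciprocal-invariant. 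The bookkeeping — verifying the new Smith/rational form of $\lambda I - A(I+N)$ — is the routine-but-delicate part I would present in full in the actual proof; the conceptual point is simply that off-diagonal nilpotent perturbations can transfer Jordan mass from the $\widetilde p$-block to the $p$-block without changing the characteristic polynomial, and any net transfer destroys membership in $\cB_{2n}$.
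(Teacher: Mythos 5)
Your proposal correctly identifies that under your chosen perturbation the characteristic polynomial of $A(I+N)$ stays equal to the self-reciprocal polynomial $g\widetilde g$, and that the obstruction would therefore have to come from the elementary divisors; but the perturbation you commit to cannot produce that obstruction. Having reduced to $g=p^k$ with $p$ irreducible and $p\neq\widetilde p$, the polynomials $g$ and $\widetilde g$ are coprime, and then for \emph{every} matrix $M$ the block upper triangular matrix $\begin{psmallmatrix} C_g & C_gM\\ 0 & C_{\widetilde g}\end{psmallmatrix}$ is similar to $C_g\oplus C_{\widetilde g}$: the Sylvester map $W\mapsto C_gW-WC_{\widetilde g}$ is invertible precisely because the characteristic polynomials $g$ and $\widetilde g$ have no common factor, so conjugating by $\begin{psmallmatrix} I & W\\ 0 & I\end{psmallmatrix}$ for a suitable $W$ removes the coupling entirely. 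Hence $A(I+N)\in\cB_{2n}$ for every $N$ supported only on the off-diagonal block, and no choice of $M$ ``transfers Jordan mass'' between the $p$- and $\widetilde p$-primary components. Such a transfer is in fact impossible on general grounds: the elementary divisors multiply to the characteristic polynomial, so a matrix with characteristic polynomial $p^2\widetilde p^{\,2}$ can never have elementary divisors $\{p^3,\widetilde p\}$, which is exactly the configuration your third step aims for.

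The idea you are missing is to place the nonzero entry of $N$ \emph{inside} one of the diagonal blocks, so that the characteristic polynomial itself changes. The paper takes $N=O\oplus E_{1,n}$ (the single nonzero entry sits at position $(n+1,2n)$, still strictly above the diagonal), which gives $A(I+N)=C_g\oplus C_{\widetilde g}(I+E_{1,n})=C_g\oplus C_h$, where $h$ is obtained from $\widetilde g$ by subtracting $1$ from the coefficient of $x$; in particular $h\neq\widetilde g$. Since $g$ is a power of a non-self-reciprocal irreducible, Proposition~\ref{prop-rev-class} forces the elementary divisor $g$ of $C_g\oplus C_h$ to be accompanied by the elementary divisor $\widetilde g$, and for degree reasons $C_h$ can supply $\widetilde g$ only if $h=\widetilde g$. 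This one-line computation replaces your entire third step.
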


\begin{proof}
	Let 
	$g(x)=x^n+a_{n-1}x^{n-1}+\cdots+a_1x+a_0$, and let 
	$\widetilde g(x)=x^n+a_0^{-1}a_1x^{n-1}+\cdots+a_0^{-1}a_{n-1}x+a_0^{-1}$ denote its reciprocal polynomial.
	
	Consider the strictly upper triangular matrix $N=O\oplus E_{1,n}\in M_{2n}(\IF)$, whose only nonzero entry is $1$ in the $(n+1,2n)$ position. Then
	\[
	A(I + N)= (C_{g} \oplus C_{\widetilde{g}}) \big((I + O) \oplus (I + E_{1,n})\big)
	=  C_{g} (I + O)  \oplus C_{\widetilde{g}} (I + E_{1,n})
	= C_{g} \oplus C_{h},
	\]
	where $C_h$ is the companion matrix of the monic polynomial
	\[
	h(x)=x^n+(a_0^{-1}a_1)x^{n-1}+\cdots+(a_0^{-1}a_{n-2})x^2+(a_0^{-1}a_{n-1}-1)x+a_0^{-1}.
	\]
	
	For $n\ge2$, the polynomials $\widetilde g(x)$ and $h(x)$ are distinct, since
	\[
	\widetilde g(x)=h(x) \iff a_0^{-1}a_{n-1}=a_0^{-1}a_{n-1}-1,
	\]
	which holds only if $1=0$, a contradiction. Hence $h(x)\neq\widetilde g(x)$. Furthermore, since $g(x)$ is not a power of a self-reciprocal polynomial, it is not a self-reciprocal polynomial. By Proposition~\ref{prop-rev-class}, it follows that
	$
	A(I+N)=C_g\oplus C_h\notin\cB_{2n}.
	$
\end{proof}

The following lemma is proved by an argument similar to that of Lemma~\ref{lem-special-nil-comp-type2} and concerns a
companion matrix in $\cB_n$ associated with a power of a self-reciprocal polynomial.

\begin{lemma}\label{lem-special-nil-comp-type1}
	Let $n \ge 3$ and let $f(x)$ be a monic polynomial of degree $n$ with nonzero constant term that is a power of a self-reciprocal irreducible polynomial. Suppose that $A=C_f\in \cB_n$, where $C_f$ is the companion matrix of $f(x)$. Then there exists a strictly upper triangular matrix $N$ such that $A(I+N)\notin \cB_n$.
\end{lemma}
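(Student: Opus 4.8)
The plan is to mimic the perturbation argument used in Lemma~\ref{lem-special-nil-comp-type2}, but since here $A=C_f$ is a single companion block, perturbing it by $I+N$ with $N$ strictly upper triangular will again produce a companion matrix $C_h$, and I must arrange that $h(x)$ is \emph{not} self-reciprocal (nor a direct sum coming from the reciprocal-pairing mechanism of Proposition~\ref{prop-rev-class}). Write $f(x)=x^n+a_{n-1}x^{n-1}+\cdots+a_1x+a_0$ with $a_0\neq 0$; self-reciprocality gives $a_{n-k}=a_0^{-1}a_k$ for all $1\le k\le n-1$ and $a_0^2=1$, so $a_0=\pm1$. Taking $N=E_{1,n}$ (the strictly upper triangular matrix with a single $1$ in the $(1,n)$ entry), one checks $C_f(I+E_{1,n})=C_h$, where $h$ differs from $f$ only in a single coefficient: $h(x)=x^n+a_{n-1}x^{n-1}+\cdots+a_2x^2+(a_1-1)x+a_0$ (the perturbation in the last column of $C_f$ shifts the constant-term column). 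The key point is to verify that $C_h$ remains invertible (immediate, since its constant term is still $a_0\ne 0$) and that $h$ is no longer self-reciprocal.

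The heart of the argument is the self-reciprocality check for $h$. Since $h$ has the same constant term $a_0=\pm1$ and the same top coefficients $a_2,\dots,a_{n-1}$, self-reciprocality of $h$ would force $a_0^{-1}a_{n-1}=a_2$ (which already holds for $f$, hence poses no obstruction) but crucially it would also force the condition coming from the $(a_1-1)$-coefficient, namely $a_0^{-1}(a_1-1)=a_{n-1}$, i.e. $a_0^{-1}a_1-a_0^{-1}=a_{n-1}$. Comparing with the self-reciprocal identity for $f$, which gives $a_0^{-1}a_1=a_{n-1}$, we would need $a_0^{-1}=0$, a contradiction. (One must handle the index bookkeeping carefully when $n=3$: there the middle coefficient $a_2$ and $a_1-1$ are the only ones, and the pairing condition still collapses to $a_0^{-1}=0$; the hypothesis $n\ge 3$ is what guarantees the perturbed coefficient $a_1-1$ is genuinely a "non-constant, non-leading" coefficient whose reciprocal partner is a \emph{different} coefficient of $h$, so that no accidental self-reciprocality can occur. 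For $n=2$ the perturbed coefficient would be $a_1$ itself, which is its own reciprocal partner, and the argument breaks — consistent with the stated hypothesis.)

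It remains to rule out the alternative in Proposition~\ref{prop-rev-class}: that $C_h$, while not the companion of a power of a self-reciprocal irreducible, might still lie in $\cB_n$ by being similar to a direct sum $\bigoplus(C_{g_j}\oplus C_{\widetilde{g_j}})$. But $C_h$ is a single companion block of an $n\times n$ matrix, so it is indecomposable precisely when $h$ is a power of an irreducible polynomial, and in general its rational canonical form is a single block $C_h$; it can decompose as a reciprocal pair only if $h=g\widetilde{g}$ with $\deg g=n/2$ and $g$ non-self-reciprocal, in which case by Proposition~\ref{prop-rev-class} we would again need $h$ self-reciprocal (a product $g\widetilde g$ is always self-reciprocal), which we have just excluded. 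Thus in every case $C_h\notin\cB_n$, completing the proof with $N=E_{1,n}$.

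The main obstacle I anticipate is the index/coefficient bookkeeping in translating the matrix perturbation $C_f(I+N)$ into the coefficient change of the associated polynomial, and ensuring the single case $n=3$ is covered (where the "interior" coefficients are sparse); I would double-check the formula $C_f(I+E_{1,n})=C_h$ by direct computation on the last column, since that is where the perturbation $E_{1,n}$ — whose only nonzero column is the $n$-th — interacts with $C_f$, and this is exactly parallel to the computation already carried out in Lemma~\ref{lem-special-nil-comp-type2}.
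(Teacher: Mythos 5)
Your proposal is correct and follows essentially the same route as the paper: the same perturbation $N=E_{1,n}$, the same resulting polynomial $h(x)=x^n+a_{n-1}x^{n-1}+\cdots+a_2x^2+(a_1-1)x+a_0$, and the same contradiction with the self-reciprocal identity $a_1=a_0^{-1}a_{n-1}$. Your extra paragraph ruling out membership in $\cB_n$ via reciprocal pairs is a reasonable elaboration of the paper's one-line appeal to Proposition~\ref{prop-rev-class} (just note that the rational canonical form of $C_h$ is a direct sum of blocks over the prime-power factors of $h$, not always a single block; every configuration allowed by Proposition~\ref{prop-rev-class} still forces $h$ to be self-reciprocal, so your conclusion is unaffected).
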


\begin{proof}
	Let $f(x)=x^n+a_{n-1}x^{n-1}+\cdots+a_1x+a_0$, where $a_0\neq0$. 
	Since $f(x)$ is a power of a self-reciprocal polynomial, it is itself self-reciprocal. Hence $a_0^2=1$ and $a_{n-k}=a_0^{-1}a_k$ for $1\le k\le n-1$.
	
	Consider the strictly upper triangular matrix $N=E_{1,n}\in M_n(\IF)$, whose only nonzero entry is $1$ in the $(1,n)$ position. Then
	$A(I+N)=C_f(I+N)=C_h$, where $C_h$ is the companion matrix of the monic polynomial
	\[
	h(x)=x^n+a_{n-1}x^{n-1}+\cdots+a_2x^2+(a_1-1)x+a_0.
	\]
	
	Since $n \ge 3$ and $f(x)$ is self-reciprocal, we have $a_1=a_0^{-1}a_{n-1}$. Furthermore, since $1\neq0$, the equality $a_1-1=a_0^{-1}a_{n-1}$ cannot hold. Thus $h(x)$ is not self-reciprocal. By Proposition~\ref{prop-rev-class}, we conclude that $A(I+N)\notin\cB_n$.
\end{proof}

We note that when \(n=2\), the situation differs from that described in Lemma~\ref{lem-special-nil-comp-type1}. Every polynomial of the form \(f(x)=x^2+a_1x+1\) is self-reciprocal for all \(a_1\in\mathbb{F}\), and thus the above argument does not apply. For instance, if \(\mathrm{char}(\mathbb{F})\neq 2\) and \(f(x)=x^2+1\) be an irreducible self-reciprocal polynomial, then for any nilpotent matrix \(N\in M_2(\mathbb{F})\), the matrix \(C_f(I+N)\) has determinant \(1\), and hence \(C_f(I+N)\in\mathcal{B}_2\). Thus Lemma~\ref{lem-special-nil-comp-type1} fails for \(n=2\), and the case \(n=2\) must be treated separately. In particular, for \(A=C_f\in\mathcal{B}_2\) with \(f(0)=1\), we show that there exists a trace-zero matrix \(X\in\mathcal{B}_2\) such that \(AX\notin\mathcal{B}_2\); see also Remark~\ref{rem-decomposable-two}.

\begin{lemma}\label{lem-special-two}
	Let $\mathrm{char}(\IF)\neq 2$ and 
	$A=\begin{psmallmatrix}
		0 & -1\\
		1 & -a_1
	\end{psmallmatrix}\in\cB_{2}$, where $a_1\in\IF$. Then, for every nonzero $r\in\IF$, there exists a trace-zero matrix $X\in\cB_{2}$ such that $rAX\notin\cB_{2}$.
\end{lemma}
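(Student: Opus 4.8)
The plan is to produce an explicit trace-zero matrix $X \in \cB_2$ for which $rAX$ fails to be a product of two involutions. Recall that an invertible $2\times 2$ matrix lies in $\cB_2$ precisely when it is similar to its inverse, which in dimension $2$ is equivalent to the condition $\tr(M) = \tr(M^{-1})$, i.e. $\tr(M)\det(M) = \tr(M)$ (using $\tr(M^{-1}) = \tr(M)/\det(M)$), so $M \in \cB_2$ iff either $\det(M) = 1$ or $\tr(M) = 0$. Thus to arrange $rAX \notin \cB_2$ I need $\det(rAX) \neq 1$ and $\tr(rAX) \neq 0$; since $\det(A) = 1$, we have $\det(rAX) = r^2\det(X)$, so it suffices to pick $X$ with $\tr(X) = 0$, $\det(X) \in \{-1,1\}$ (so that $X \in \cB_2$), and then check that $r^2\det(X) \neq 1$ and $\tr(rAX) = r\,\tr(AX) \neq 0$ can be simultaneously achieved.

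First I would take $X = \begin{psmallmatrix} x & y \\ z & -x\end{psmallmatrix}$ with $\det(X) = -x^2 - yz = \pm 1$, which guarantees $X \in \cB_2$ (it has trace zero). Then $\tr(AX)$ is a linear functional in $x,y,z$: writing $A = \begin{psmallmatrix} 0 & -1 \\ 1 & -a_1\end{psmallmatrix}$, one computes $\tr(AX) = -z + (y - a_1(-x)) = a_1 x + y - z$, up to a routine sign check. The goal is then to choose $(x,y,z)$ on the quadric $x^2 + yz = \mp 1$ with $a_1 x + y - z \neq 0$ and with $r^2\det(X) \neq 1$. Since $r \neq 0$ is given, I can first decide the sign of $\det(X)$: if $r^2 \neq 1$, either sign works for the determinant condition; if $r^2 = 1$, then I must take $\det(X) = -1$, i.e. $x^2 + yz = 1$. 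In all cases there is at least one admissible value $\delta \in \{1,-1\}$ for $\det(X)$, and I fix it.

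Next I would exhibit a concrete solution on the corresponding quadric. For instance, with $\det(X) = \delta$ one may try $z = 0$, $x = 0$, $y = -\delta$, giving $X = \begin{psmallmatrix} 0 & -\delta \\ 0 & 0\end{psmallmatrix}$, a nilpotent trace-zero matrix, hence in $\cB_2$; then $\tr(AX) = y - z = -\delta \neq 0$. This already does the job whenever the determinant condition is satisfied with this choice — but note $\det(X) = 0$ here, not $\pm 1$, which is still fine since nilpotent matrices lie in $\cB_2$ by the remark following Proposition~\ref{prop-rev-class} ($\pm I + tN \in \cB_2$, and more directly any nilpotent $2\times 2$ matrix is a product of two involutions as it is similar to $E_{1,2}$). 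With $X = \begin{psmallmatrix} 0 & -1 \\ 0 & 0\end{psmallmatrix}$ we get $\det(rAX) = 0 \neq 1$ and $\tr(rAX) = -r \neq 0$, so $rAX \notin \cB_2$, and $X$ has trace zero. This single choice works for every $a_1$ and every nonzero $r$, so I do not even need the case analysis above.

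The main obstacle — really the only subtlety — is getting the $\cB_2$ membership criterion exactly right in dimension two, including the borderline cases: an invertible $2\times 2$ matrix $M$ is in $\cB_2$ iff $\det M = \pm 1$ and ($\det M = 1$ or $\tr M = 0$), while singular matrices in $\cB_2$ are exactly the nilpotent ones (similar to $E_{1,2}$ or $0$); I should state and justify this characterization cleanly at the start, citing Proposition~\ref{prop-rev-class} for the rational-canonical-form description, and then the verification that $X = \begin{psmallmatrix}0 & -1\\0 & 0\end{psmallmatrix}$ works is a two-line computation of the trace and determinant of $rAX$. I would also remark that the trace-zero hypothesis on $X$ is what makes this lemma the right tool for the $n=2$ case of the argument in Lemma~\ref{lem-key-nonunital-B}, contrasting with the strictly-upper-triangular constructions of Lemmas~\ref{lem-special-nil-comp-type2} and~\ref{lem-special-nil-comp-type1}.
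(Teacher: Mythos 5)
There is a genuine gap, and it is fatal to the construction you ultimately rely on: a nonzero nilpotent matrix is \emph{not} an element of $\cB_2$. Every involution $J$ satisfies $J^2=I$, hence is invertible, so every product of two involutions is invertible; indeed Proposition~\ref{prop-rev-class} is stated only for invertible matrices. The remark you cite says $\pm I+tN\in\cB_n$ for nilpotent $N$ (a \emph{unipotent} perturbation of $\pm I$), not $tN\in\cB_n$, and $E_{1,2}$ is certainly not a product of two involutions since it is singular. Consequently your chosen witness $X=\begin{psmallmatrix}0&-1\\0&0\end{psmallmatrix}$ does not satisfy the hypothesis $X\in\cB_2$ required by the lemma, and the statement ``this single choice works for every $a_1$ and every nonzero $r$'' is false. (That $rAX$ is singular does place it outside $\cB_2$, but the lemma demands the factor $X$ itself lie in $\cB_2$.)

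Your fallback framework (trace-zero $X$ with $\det X=\delta\in\{1,-1\}$, which does lie in $\cB_2$ since its characteristic polynomial is $\lambda^2\pm1$) is salvageable but incomplete: when $r^4=1$ one is forced into the case $\det(rAX)=-1$, where membership in $\cB_2$ is decided by the trace, so you must actually exhibit, for \emph{every} $a_1$, a point on the quadric $-x^2-yz=\delta$ with $a_1x+y-z\neq0$; you sketch this but never carry it out, and some care is needed (e.g.\ the natural choices $x=0$, $yz=1$ can give $y-z=0$). The paper avoids this by taking the involution $Y=\begin{psmallmatrix}1&y\\0&-1\end{psmallmatrix}$ with $y\neq-a_1$, computing the characteristic polynomial of $rAY$ directly, and in the exceptional case $r^2=-1$ replacing the witness by $rY$ (which is again trace-zero with determinant $1$, hence in $\cB_2$). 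To repair your proof you should either complete the quadric argument in the cases $r^2=\pm1$ or adopt a witness of the paper's type; the nilpotent shortcut cannot be fixed.
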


\begin{proof}
	Let $Y=\begin{psmallmatrix}1&y\\0&-1\end{psmallmatrix}\in\cB_{2}$, where $y\in\IF$ and $y\neq -a_1$. Then
	\[
	rAY=\begin{psmallmatrix}0&r\\ r&r(y+a_1)\end{psmallmatrix}.
	\]
	By Proposition~\ref{prop-rev-class}, $rAY$ belongs to $\cB_{2}$ if and only if its characteristic polynomial $\lambda^2-r(y+a_1)\lambda-r^2$ is self-reciprocal, which is equivalent to the conditions $r^4=1$ and $(r^2+1)(y+a_1)=0$.
	
	If $r^2\neq -1$, then $r^2+1\neq0$, and hence $y+a_1=0$, contradicting the choice of $y$. Thus $rAY\notin\cB_{2}$ in this case. Now suppose that $r^2=-1$ and set $Z=rY\in\cB_{2}$. Since the characteristic polynomial of $Z$ is $\lambda^2-1$, we have $Z\in\cB_{2}$. Moreover,
	\[
	rAZ=rA(rY)=-AY,
	\]
	whose characteristic polynomial is $\lambda^2+(y+a_1)\lambda-1$. Since $y+a_1\neq0$, this polynomial is not self-reciprocal, and therefore $rAZ\notin\cB_{2}$. Consequently, for $r^2\neq-1$ we may take $X=Y$, while for $r^2=-1$ we take $X=rY$. In either case, $rAX\notin\cB_{2}$, which completes the proof.
\end{proof}

\begin{remark}\label{rem-special-two-decomp}
	Observe that 
	\(\begin{psmallmatrix} a & 0 \\ 0 & a^{-1} \end{psmallmatrix}\) 
	is similar in \(M_2(\mathbb{F})\) to 
	\(\begin{psmallmatrix} 0 & -1 \\ 1 & a+a^{-1} \end{psmallmatrix}\),
	where \(a \neq 0,\pm 1\). Hence, if the rational canonical form of 
	\(A \in \mathcal{B}_n\) given in \eqref{eq-rev-class} contains the matrix  \(\begin{psmallmatrix} a & 0 \\ 0 & a^{-1} \end{psmallmatrix}\),
	arising as the direct sum of two \(1\times1\) blocks, then, up to similarity, it may be replaced by a companion matrix of the type considered in Lemma~\ref{lem-special-two}.
	\qed
\end{remark}

The following result considers a $3\times 3$ matrix $A\in\cB_3$ whose rational canonical form contains as diagonal blocks the matrices described in Lemma~\ref{lem-special-two} and Remark~\ref{rem-special-two-decomp} as diagonal blocks.

\begin{lemma}\label{lem-special-three}
	Let $\mathrm{char}(\IF) \neq 2$ and 
	$A=\begin{psmallmatrix}
		0& -1\\
		1& -a_1
	\end{psmallmatrix}\oplus \alpha \in \cB_{3}$,
	where $a_1\in\IF$ and $\alpha=\pm 1$. Then, for every nonzero $r\in\IF$, there exists a trace-zero matrix $X\in\cB_3$ such that $rAX\notin\cB_{3}$.
\end{lemma}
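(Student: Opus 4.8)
The statement extends Lemma~\ref{lem-special-two} to a $3\times 3$ matrix $A = A_2 \oplus \alpha$, where $A_2 = \begin{psmallmatrix} 0 & -1 \\ 1 & -a_1\end{psmallmatrix} \in \cB_2$ and $\alpha \in \{-1,1\}$. The natural approach is to reduce to the $2\times 2$ situation already handled in Lemma~\ref{lem-special-two}: I would choose $X \in \cB_3$ of the block form $X = X_2 \oplus \beta$ with $\beta \in \{-1,1\}$ and $X_2 \in \cB_2$ trace zero of the type appearing in the proof of Lemma~\ref{lem-special-two}, namely $X_2 = \begin{psmallmatrix} 1 & y \\ 0 & -1 \end{psmallmatrix}$ or a scalar multiple thereof. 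Then $rAX = (rA_2 X_2) \oplus (r\alpha\beta)$. Since $\cB_3$ is a similarity-invariant set characterized by Proposition~\ref{prop-rev-class} via the rational canonical form, and since a direct sum lies in $\cB_3$ only if its "pieces" fit the pattern of self-reciprocal blocks plus reciprocal pairs, the membership $rAX \in \cB_3$ forces strong constraints on both $r A_2 X_2$ and the scalar $r\alpha\beta$.

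The key steps, in order, are: (1) For the scalar block $r\alpha\beta$ to be part of a $\cB_3$ element, it must be a root of a self-reciprocal polynomial or pair up reciprocally; since it is a single $1\times 1$ block, either $r\alpha\beta = \pm 1$ (a root of a self-reciprocal linear factor $x\mp 1$), or $r\alpha\beta$ is paired with $(r\alpha\beta)^{-1}$ inside the $2\times 2$ part — but then $rA_2X_2$ would have eigenvalues $r\alpha\beta$ and $(r\alpha\beta)^{-1}$, which is a rigid condition. (2) Split into cases according to $r$: if $r \neq \pm 1$, then $r\alpha\beta \neq \pm 1$, so the scalar block cannot be a self-reciprocal $1\times 1$ block, forcing the characteristic polynomial of $rA_2X_2$ to be $(x - r\alpha\beta)(x - (r\alpha\beta)^{-1})$; computing $\det(rA_2X_2) = r^2 \det(A_2X_2)$ and $\tr(rA_2X_2)$ and comparing shows this can be arranged to fail by a suitable choice of $y$ (and the free sign $\beta$), unless one is in the exceptional situation $r^4 = 1$, i.e. $r^2 = \pm 1$, which since $r \neq \pm 1$ means $r^2 = -1$. (3) Handle the remaining cases $r = \pm 1$ and $r^2 = -1$ directly, mimicking the endgame of Lemma~\ref{lem-special-two}: when $r = \pm 1$, take $X = Y \oplus \beta$ with $Y = \begin{psmallmatrix} 1 & y \\ 0 & -1\end{psmallmatrix}$, $y \neq -a_1$, and check that $rAX = \begin{psmallmatrix} 0 & \pm 1 \\ \pm 1 & \pm(y+a_1)\end{psmallmatrix} \oplus (\pm\beta)$ has a non-self-reciprocal $2\times2$ characteristic polynomial $\lambda^2 \mp (y+a_1)\lambda - 1$, so by Proposition~\ref{prop-rev-class} it escapes $\cB_3$; when $r^2 = -1$, replace $Y$ by $rY$ (whose characteristic polynomial is $\lambda^2-1$, so $rY \in \cB_2$ and $rY \oplus \beta \in \cB_3$), giving $rA(rY\oplus\beta) = (-A_2 Y) \oplus (-\beta)$ with $2\times 2$ characteristic polynomial $\lambda^2 + (y+a_1)\lambda - 1$, again not self-reciprocal.

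The main obstacle is the bookkeeping in step~(2): I must verify that, outside the genuinely exceptional value $r^2 = -1$, one cannot "rescue" $rA_2X_2 \oplus (r\alpha\beta)$ by forcing the scalar to pair reciprocally with an eigenvalue of the $2\times 2$ block. The point is that $X_2$ carries the free parameter $y$ and $X$ carries the free sign $\beta$, while $\det(A_2 Y) = 1$ is fixed; so the characteristic polynomial of $rA_2Y$ is $\lambda^2 - r(y+a_1)\lambda + r^2$ (up to signs coming from the explicit computation $rA_2Y = \begin{psmallmatrix}0 & r \\ r & r(y+a_1)\end{psmallmatrix}$, whose characteristic polynomial is $\lambda^2 - r(y+a_1)\lambda - r^2$), and self-reciprocity plus the reciprocal-pairing constraint on the scalar together pin down $r$ to satisfy $r^4 = 1$ and $(r^2+1)(y+a_1) = 0$, exactly as in Lemma~\ref{lem-special-two}. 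For $r^2 \neq -1$ this forces $y = -a_1$, contradicting the choice of $y$; and the scalar block $r\alpha\beta$ being forced to equal $\pm 1$ when $r = \pm 1$ is compatible, so those cases need the explicit non-self-reciprocal polynomial exhibited in step~(3). Once the case analysis is organized this way, each verification is a short determinant-and-trace computation, and the proof closes by invoking Proposition~\ref{prop-rev-class} in every branch.
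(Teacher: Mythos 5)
There is a genuine gap: the matrix $X$ you construct is not trace-zero, so it does not satisfy the conclusion of the lemma. You take $X = X_2 \oplus \beta$ with $X_2 = \begin{psmallmatrix} 1 & y \\ 0 & -1\end{psmallmatrix}$ (or $rY$), which has trace $0$, and $\beta \in \{-1,1\}$; hence $\tr(X) = \beta = \pm 1 \neq 0$ since $\mathrm{char}(\IF) \neq 2$. The trace-zero requirement is not cosmetic: the lemma is applied in Case~(e) of Lemma~\ref{lem-key-nonunital-B}, where the form $T(X) = rAX$ is known only on $\mathfrak{sl}_3(\IF)$, so a witness with nonzero trace is useless there. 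Repairing the block-diagonal strategy is not straightforward either: to have $X_2 \oplus \beta \in \cB_3$ with total trace zero you would need $X_2 \in \cB_2$ with $\tr(X_2) = -\beta \in \{\pm 1\}$, which essentially pins down the similarity class of $X_2$ (e.g.\ the companion matrix of $x^2 + \beta x + 1$) and destroys the free parameter $y$ that your case analysis in steps (2)--(3) relies on. This is precisely why the paper abandons the block-diagonal reduction and instead uses the non-block-diagonal family $X_a = \begin{psmallmatrix} 0&1&a\\ 0&0&-1\\ 1&a&0\end{psmallmatrix}$, which has characteristic polynomial $\lambda^3+1$ (hence is trace-zero and lies in $\cB_3$ for every $a$) while still carrying a free parameter $a$ with which to make the characteristic polynomial of $rAX_a$ non-self-reciprocal.

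A secondary, more minor point: even where your $2\times 2$ block analysis goes through, concluding $rAX \notin \cB_3$ from ``the $2\times 2$ block has non-self-reciprocal characteristic polynomial'' requires checking the elementary-divisor structure of the full $3\times 3$ matrix (the scalar block could in principle pair reciprocally with an eigenvalue of the $2\times 2$ block). You flag this in step (1), and the check does work out, but it must be carried through all branches; Proposition~\ref{prop-rev-class} applies to the rational canonical form of the whole matrix, not blockwise.
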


\begin{proof}
	Let
	$
	X_a =
	\begin{psmallmatrix}
		0&1&a\\
		0&0&-1\\
		1&a&0
	\end{psmallmatrix}\in M_3(\IF),
	$
	where $a\in\IF$. Since the characteristic polynomial of $X_a$ is $\lambda^3+1$, we have $X_a\in\cB_3$ for all $a\in\IF$. Note that
	$
	rAX_a =
	r\begin{psmallmatrix}
		0&0&1\\
		0&1&a+a_1\\
		\alpha&\alpha a&0
	\end{psmallmatrix},
	$
	where $r\neq 0$ and $\alpha=\pm 1$. The characteristic polynomial of $rAX_a$ is
	\[
	\det(\lambda I_3-rAX_a)
	=\lambda^{3}-r\lambda^{2}-(a^2+aa_1+1)(\alpha r^2)\lambda+\alpha r^3.
	\]
	This polynomial is self-reciprocal if and only if
	\[
	(\alpha r^3)^{-1}=\alpha r^3
	\quad\text{and}\quad
	(a^2+aa_1+1)(\alpha r^2)=\alpha r^4.
	\]
	Note that $(a^2+aa_1+1)(\alpha r^2)=\alpha r^4$ if and only if $a^2+aa_1+1-r^2=0$. Since $\mathrm{char}(\IF)\neq 2$, we have $|\IF|>2$. Hence, for any fixed $r\in\IF$, we can choose $a\in\IF$ such that $a^2+aa_1+1-r^2\neq 0$. For this choice of $a$, let $X:=X_a$. Then the characteristic polynomial of $rAX$ is non-self-reciprocal. In view of Proposition~\ref{prop-rev-class}, we conclude that $rAX\notin\cB_3$ for all nonzero $r\in\IF$.
\end{proof}

We note the final observation needed to prove Lemma~\ref{lem-key-nonunital-B}. 
The rational canonical form of \(A \in \mathcal{B}_n\) may contain 
\((1)\oplus(-1)\) as a diagonal block; for example, this occurs if 
\(A\) is similar to \(I_k \oplus (-I_{n-k})\), where \(k \ge 1\), \(n-k \ge 1\), 
and \(I_\ell\) denotes the identity matrix in \(M_\ell(\mathbb{F})\). 
Below we describe a technique for handling such situations.

\begin{remark}\label{rem-decomposable-two}
	Let \(\mathrm{char}(\mathbb{F})\neq 2\). The matrix 
	\(\begin{psmallmatrix}1&0\\0&-1\end{psmallmatrix}\) is similar to the companion matrix 
	\(C_f=\begin{psmallmatrix}0&1\\1&0\end{psmallmatrix}\in\mathcal{B}_2\), where \(f(x)=x^2-1\). 
	Thus, if the rational canonical form of \(A\in\mathcal{B}_n\) given in \eqref{eq-rev-class} 
	contains \(\begin{psmallmatrix}1&0\\0&-1\end{psmallmatrix}\) as a diagonal block, then, up to similarity, 
	this block may be replaced by \(C_f\). Moreover, for 
	\(N=E_{1,2}=\begin{psmallmatrix}0&1\\0&0\end{psmallmatrix}\in M_2(\mathbb{F})\), 
	a direct computation gives \(C_f(I+N)=\begin{psmallmatrix}0&1\\1&1\end{psmallmatrix}\notin\mathcal{B}_2\).
\end{remark}

We observe that $\cB_n\subseteq \cC_n\subseteq \cD_n$. These inclusions are generally proper; see \cite[Fact~2, Corollary~4]{Ba}, \cite[p.~161]{GHR}, \cite[Theorem~2]{HK}, and Remark~\ref{rem-order-3-char-2} for examples. Although a complete characterization of $\cC_n$ is not known, the cases in which $\cC_n=\cD_n$ were completely classified by Ballantine in \cite[Fact~5]{Ba}.
We recall this result below.

\begin{remark}[\cite{Ba}]\label{rem_C_n}
	We have $\mathcal{C} _ {n} =\mathcal{D} _ {n} $ if and only if at least one of the following holds:
	\begin{enumerate}[label={\normalfont(\roman*)}]
		\item $n\le 2$;
		\item $|\IF|=2,3,$ or $5$;
		\item $n=3$ and either $\operatorname{char}(\IF)=3$ or $t^2+t+1$ is irreducible in $\IF[t]$;
		\item $n=4$ and $\operatorname{char}(\IF)=2$.
	\end{enumerate}
\end{remark}

Let $\mathfrak{sl}_n(\IF)$ denote the subspace of $M_n(\IF)$ consisting of trace-zero matrices. Note that $\mathfrak{sl}_n(\IF)$ is the subspace spanned by $\mathcal{N}$. We conclude this section by recalling the linear preservers of the set $\mathcal{N}$, as characterized by Botta et al.\ in \cite{BPW}. Their main result builds on Gerstenhaber’s work in \cite{Ge}, which assumes that the underlying field contains at least $n$ elements. Serežkin later removed this cardinality restriction in \cite{Se}, showing that Gerstenhaber’s result holds over arbitrary fields. For more recent and simpler proofs of both Gerstenhaber’s theorem and Serežkin’s generalization, see \cite{MOR}. The following characterization follows from the results in \cite{BPW} and \cite{Se}.

\begin{proposition}[\cite{BPW,Se}]\label{prop-nil-preserver}
	Let $\IF$ be a field, and let $T : M_n(\IF)\rightarrow M_n(\IF)$ be a bijective linear map with $T(\mathcal{N}) \subseteq \mathcal{N}$. Then there exist a nonzero scalar $r \in \IF$ and an invertible matrix $P \in M_n(\IF)$ such that the restriction of $T$ to $\mathfrak{sl}_n(\IF)$ is one of the following:
	$$X\mapsto rPXP^{-1} \quad\text{or}\quad X\mapsto rPX^{t}P^{-1}.$$
\end{proposition}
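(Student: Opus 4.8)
\textbf{Reduction to $\mathfrak{sl}_n(\IF)$.} Since every nilpotent matrix has zero trace, $\mathcal{N}\subseteq\mathfrak{sl}_n(\IF)$, and as noted above $\mathfrak{sl}_n(\IF)=\Span\,\mathcal{N}$. Hence $T(\mathfrak{sl}_n(\IF))=\Span\,T(\mathcal{N})\subseteq\Span\,\mathcal{N}=\mathfrak{sl}_n(\IF)$, and because $T$ is injective and $\mathfrak{sl}_n(\IF)$ is finite-dimensional, the restriction $T_0:=T|_{\mathfrak{sl}_n(\IF)}$ is a bijective $\IF$-linear self-map of $\mathfrak{sl}_n(\IF)$ with $T_0(\mathcal{N})\subseteq\mathcal{N}$. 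As the conclusion concerns only $T_0$, it suffices to classify all such maps, which is exactly the content of \cite{BPW} when $|\IF|\ge n$ and of \cite{Se} in general; the plan below is the roadmap those references follow.

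\textbf{Passing to maximal nilpotent subspaces.} The key external input is the Gerstenhaber--Serežkin theorem (see \cite{Ge,Se,MOR}): every linear subspace of $M_n(\IF)$ contained in $\mathcal{N}\cup\{O\}$ has dimension at most $\binom{n}{2}$, and a subspace attaining this bound is similar to the space $\mathfrak{n}$ of strictly upper triangular matrices. Since $T_0$ is a linear bijection preserving $\mathcal{N}$, it carries each maximal nilpotent subspace to another one. Identifying maximal nilpotent subspaces with complete flags $\mathcal{F}:0=F_0\subsetneq F_1\subsetneq\cdots\subsetneq F_n=\IF^n$ via $\mathcal{F}\mapsto V_{\mathcal{F}}:=\{A\in M_n(\IF): A F_i\subseteq F_{i-1}\text{ for all }i\}$ (so $V_{\mathcal{F}}=\mathfrak{n}$ for the standard flag), one thus obtains an induced bijection $\mathcal{F}\mapsto\mathcal{F}'$ on the set of complete flags.

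\textbf{From flags to the explicit form.} Next I would extract the shape of $T_0$ from its action on flags. Intersection dimensions $\dim(V_{\mathcal{F}_1}\cap V_{\mathcal{F}_2})$ (and similar linear invariants) are preserved by $T_0$ and encode the relative position of the flags; exploiting these incidences shows that $\mathcal{F}\mapsto\mathcal{F}'$ is compatible with the incidence geometry of $\IF^n$, hence induces a collineation of the projective space of lines (or, dually, of hyperplanes) for $n\ge 3$. By the fundamental theorem of projective geometry, $T_0$ is then, up to a nonzero scalar $r$, conjugation by an invertible $P$ possibly twisted by a field automorphism $\sigma$ and possibly composed with transposition, i.e.\ $X\mapsto rPX^{\sigma}P^{-1}$ or $X\mapsto rP(X^{\sigma})^{t}P^{-1}$; the transpose branch appears because transposition sends $V_{\mathcal{F}}$ to $V_{\mathcal{F}^{\perp}}$, so $T_0$ may realize the flag duality. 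Finally, $\IF$-linearity of $T_0$ forces $\sigma$ to be trivial, leaving precisely the two stated forms (and both indeed preserve $\mathcal{N}$, being scalar multiples of similarity and transposition, which preserve nilpotency).

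\textbf{Main obstacle.} The substantial part is the middle step: the Gerstenhaber--Serežkin theorem, in particular the uniqueness up to similarity of the $\binom{n}{2}$-dimensional nilpotent subspaces and its validity over arbitrary fields (the field-size-free version being Serežkin's contribution), together with the care needed to turn ``$T_0$ permutes maximal nilpotent subspaces'' into a genuine collineation and to isolate the transpose alternative correctly. The degenerate cases $n=1$ (where $\mathfrak{sl}_1(\IF)=0$) and $n=2$, as well as very small fields, fall outside the range of the projective-geometry argument and are handled directly by hand.
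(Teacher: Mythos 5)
The paper does not prove this proposition at all: it is imported verbatim from the literature, with the text stating only that ``the following characterization follows from the results in \cite{BPW} and \cite{Se}.'' So there is no internal proof to compare against. Your outline is a faithful reconstruction of the strategy actually used in those references: reduce to $\mathfrak{sl}_n(\IF)=\Span\mathcal{N}$, invoke the Gerstenhaber--Sere\v{z}kin bound $\binom{n}{2}$ together with the conjugacy of maximal nilpotent subspaces to the strictly upper triangular matrices, observe that the bijection $T_0$ permutes these maximal subspaces and hence the complete flags parametrizing them, and then use the incidence geometry plus the fundamental theorem of projective geometry to force the two stated forms, with $\IF$-linearity killing the field automorphism and the transpose branch accounting for flag duality. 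That said, as written this is a proof plan rather than a proof: the two load-bearing steps --- the uniqueness up to similarity of $\binom{n}{2}$-dimensional nilpotent subspaces over an arbitrary field (Sere\v{z}kin's contribution), and the verification that the induced permutation of flags really is a collineation --- are named but not carried out, and the cases $n=2$ and very small fields are deferred ``by hand'' without detail. Since the paper itself treats the proposition as a black box with citations, your level of detail is appropriate for the role the statement plays here; just be aware that nothing in your sketch could stand alone without \cite{BPW}, \cite{Se}, or \cite{MOR} doing the real work.
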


\section{Proof of the main theorem}\label{proofs}

In this section, we prove Theorem~\ref{thm-main}. We begin by applying the results of Section~\ref{section2} to show that a bijective linear map $T: M_n(\IF)\rightarrow M_n(\IF)$ preserving $\cB_n$, under suitable additional assumptions, must satisfy $T(I)=\pm I$; see also Lemma~\ref{lem-main-all-form}. This property plays a crucial role in the proof of Theorem~\ref{thm-main}(i).

\begin{lemma}\label{lem-key-nonunital-B}
	Let $n\ge 2$, $\mathrm{char}(\IF)\neq 2$, and let $T: M_n(\IF)\rightarrow M_n(\IF)$ be a bijective linear map such that $T(\cB_n)\subseteq \cB_n$. Suppose that there exist a nonzero scalar $r\in\IF$ and an invertible matrix $P\in M_n(\IF)$ such that the restriction of $T$ to $\mathfrak{sl}_n(\IF)$ has one of the following forms:
	\begin{equation*}
		X \mapsto r\,T(I)PXP^{-1} \quad \text{or} \quad X \mapsto r\,T(I)PX^tP^{-1}.
	\end{equation*}
	Then $T(I)=\pm I$.
\end{lemma}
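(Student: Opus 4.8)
Put $S:=T(I)$. Since $I=I^{2}\in\cB_n$ and $T(\cB_n)\subseteq\cB_n$, we have $S\in\cB_n$, so $S$ is invertible. The plan is to turn the hypothesis into two multiplicative constraints on $S$, and then force $S=\pm I$ by running through the rational canonical form of $S$ (Proposition~\ref{prop-rev-class}) and feeding the resulting blocks into Lemmas~\ref{lem-special-nil-comp-type2}, \ref{lem-special-nil-comp-type1}, \ref{lem-special-two}, \ref{lem-special-three} and Remarks~\ref{rem-special-two-decomp}, \ref{rem-decomposable-two}.

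\textbf{Step 1: two constraints.} Since $\mathcal N\subseteq\mathfrak{sl}_n(\IF)$ and $I+tN\in\cB_n$ for all $t\in\IF$, $N\in\mathcal N$, linearity gives $T(I+tN)=S+t\,T(N)=S\bigl(I+tr\,PNP^{-1}\bigr)$ (resp. $S(I+tr\,PN^{t}P^{-1})$ in the transpose form). As $t$ and $N$ vary, and because $r\ne0$ and $X\mapsto PXP^{-1}$, $X\mapsto PX^{t}P^{-1}$ map $\mathcal N$ onto $\mathcal N$, the matrices $tr\,PNP^{-1}$ (resp. $tr\,PN^{t}P^{-1}$) sweep out all of $\mathcal N$; hence
\textup{(A)}: $S(I+N)\in\cB_n$ for every $N\in\mathcal N$. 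Similarly, applying $T$ to a trace-zero $Y\in\cB_n$ and using that the same two maps are bijections of $\{Y\in\cB_n:\tr Y=0\}$ (they preserve $\cB_n$ and the trace) yields
\textup{(B)}: $rSY\in\cB_n$ for every trace-zero $Y\in\cB_n$. Both \textup{(A)} and \textup{(B)} are unchanged upon replacing $S$ by any conjugate, so I may assume $S$ is in rational canonical form; further conjugations inside individual blocks (as in Remarks~\ref{rem-special-two-decomp}, \ref{rem-decomposable-two}) are equally harmless.

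\textbf{Step 2: reduction to a minimal bad summand, and the cases handled by \textup{(A)}.} Assume for contradiction $S\ne\pm I$. Scanning the blocks of $S$, one sees that $S$ must have, after reordering, a direct summand $S_{1}$ of one of the kinds: (i) $C_{f}$ with $f$ a power of a self-reciprocal irreducible and $\deg f\ge2$; (ii) $C_{g}\oplus C_{\widetilde g}$ with $g$ a power of a non-self-reciprocal irreducible; (iii) $(1)\oplus(-1)$ — otherwise every block is $(\pm1)$ and all equal, so $S=\pm I$. Write $S=S_{1}\oplus S_{2}$ with $S_{2}\in\cB_{n-\dim S_{1}}$ (a direct sum of blocks of the type in Proposition~\ref{prop-rev-class}, possibly empty). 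I will use the cancellation principle (immediate from uniqueness of the rational canonical form): if $A_{2}\in\cB_{k_{2}}$ and $A_{1}\oplus A_{2}\in\cB_{k_{1}+k_{2}}$, then from $A_{1}\oplus A_{2}\sim A_{1}^{-1}\oplus A_{2}^{-1}\sim A_{1}^{-1}\oplus A_{2}$ and cancelling $A_{2}$ one gets $A_{1}\sim A_{1}^{-1}$, i.e. $A_{1}\in\cB_{k_{1}}$. Now if $S_{1}$ is of kind (i) with $\deg f\ge3$, of kind (ii) with $\deg g\ge2$, or of kind (iii), then Lemma~\ref{lem-special-nil-comp-type1}, Lemma~\ref{lem-special-nil-comp-type2}, or Remark~\ref{rem-decomposable-two} produces a strictly upper triangular (hence nilpotent) $N_{1}$ with $S_{1}(I+N_{1})\notin\cB_{\dim S_{1}}$; taking $N:=N_{1}\oplus O\in\mathcal N$, the cancellation principle gives $S(I+N)=\bigl(S_{1}(I+N_{1})\bigr)\oplus S_{2}\notin\cB_n$, contradicting \textup{(A)}.

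\textbf{Step 3: the remaining case — a $2\times2$ companion block.} Otherwise $S_{1}$ is of kind (i) with $\deg f=2$ (so $f=x^{2}+a_{1}x+1$, $S_{1}=\begin{psmallmatrix}0&-1\\1&-a_{1}\end{psmallmatrix}$) or of kind (ii) with $\deg g=1$ (so $S_{1}=\diag(a,a^{-1})$, $a\ne0,\pm1$, which by Remark~\ref{rem-special-two-decomp} is similar to $\begin{psmallmatrix}0&-1\\1&a+a^{-1}\end{psmallmatrix}$); in either case $S\sim\begin{psmallmatrix}0&-1\\1&-a_{1}\end{psmallmatrix}\oplus S_{2}$ with $a_{1}\in\IF$, $S_{2}\in\cB_{n-2}$. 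For $n=2$, Lemma~\ref{lem-special-two} gives a trace-zero $X\in\cB_2$ with $rSX\notin\cB_2$, contradicting \textup{(B)}; for $n=3$, where $S_{2}\in\{(1),(-1)\}$, Lemma~\ref{lem-special-three} does the same. For $n\ge4$ I would take $Y=X_{1}\oplus W$ with $X_{1}\in\cB_2$ chosen via Lemma~\ref{lem-special-two} so that $\chi_{rS_{1}X_{1}}$ is non-self-reciprocal, and $W\in\cB_{n-2}$ chosen with $\tr W=-\tr X_{1}$ and $\chi_{rS_{2}W}$ self-reciprocal — for instance a suitable invertible multiple of the companion matrix of $x^{\,n-2}\pm1$, in the spirit of the matrices used in Lemmas~\ref{lem-special-two} and~\ref{lem-special-three} — so that $\chi_{rSY}=\chi_{rS_{1}X_{1}}\cdot\chi_{rS_{2}W}$ is non-self-reciprocal (the product of a self-reciprocal and a non-self-reciprocal polynomial is never self-reciprocal, since $\widetilde{pq}=\widetilde p\,\widetilde q$ and $\IF[x]$ is a domain), again contradicting \textup{(B)}. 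Hence $S=\pm I$.

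I expect the last sub-case ($n\ge4$ in the $2\times2$-block situation of Step 3) to be the main obstacle: the scalar $r$ is unknown, and nilpotent perturbations of $2\times2$ companion blocks (and of scalar blocks) never escape $\cB$, so \textup{(A)} is useless there; one genuinely has to exhibit a trace-zero element of $\cB_n$ whose product with $rS$ has a non-self-reciprocal characteristic polynomial for \emph{every} admissible $r$, extending the explicit constructions of Lemmas~\ref{lem-special-two} and~\ref{lem-special-three}.
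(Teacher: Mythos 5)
Your Steps 1--2 and the $n\le 3$ part of Step 3 are sound and follow essentially the same route as the paper: the paper's proof likewise reduces to your constraints \textup{(A)} and \textup{(B)} (it uses $X=I+r^{-1}N$ for \textup{(A)} and trace-zero elements of $\cB_n$ for \textup{(B)}), scans the rational canonical form, and kills the blocks of size $\ge 3$, the $C_g\oplus C_{\widetilde g}$ blocks with $\deg g\ge 2$, and the $(1)\oplus(-1)$ configuration via Lemmas~\ref{lem-special-nil-comp-type1}, \ref{lem-special-nil-comp-type2} and Remark~\ref{rem-decomposable-two}, using exactly the cancellation of direct summands that you make explicit. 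The genuine gap is the sub-case you yourself flag as the main obstacle: $n\ge 4$ with $S\sim S_1\oplus S_2$, where $S_1$ is a $2\times 2$ self-reciprocal companion block and $S_2$ is a sum of such blocks and scalar blocks $\alpha=\pm1$. Your plan there is to pick a trace-zero $W\in\cB_{n-2}$ with $\chi_{rS_2W}$ self-reciprocal. This is impossible for an arbitrary nonzero $r$ (and $r$ is arbitrary in the hypothesis of the lemma): for instance, if $S_2=\alpha I_{n-2}$, then $\det(rS_2W)=\pm r^{\,n-2}$ since $\det W=\pm1$, so the constant term of $\chi_{rS_2W}$ is $\pm r^{\,n-2}$, whereas a self-reciprocal polynomial must have constant term $a_0$ with $a_0^2=1$; hence such a $W$ exists only when $r^{2(n-2)}=1$. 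Your concrete candidate (a scalar multiple of the companion matrix of $x^{\,n-2}\pm1$) also does not mesh with the block-diagonal $S_2$: the product $rS_2W$ is then a full matrix whose characteristic polynomial you cannot factor or control.

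The paper resolves this differently (its Cases (d) and (e)). It chooses $W$ block-by-block conformally with $S_2$ --- $\begin{psmallmatrix}0&1\\1&0\end{psmallmatrix}$ against each $2\times2$ companion block and $\diag(-\alpha,\alpha)$ against each \emph{pair} of scalar blocks --- so that every product block is similar to $\diag(-r,r)$, and then excludes membership in $\cB_n$ by the full elementary-divisor characterization of Proposition~\ref{prop-rev-class}, not by the characteristic-polynomial test: note that $(x^2-r^2)$ is itself non-self-reciprocal when $r^4\ne1$, so no factorization of $\chi_{rSY}$ into a self-reciprocal and a non-self-reciprocal piece is available even to the paper. Moreover, when the number of scalar blocks is odd they cannot all be paired into $\diag(-\alpha,\alpha)$ pieces, and the paper must absorb one scalar block into the ``bad'' part and invoke Lemma~\ref{lem-special-three} (its Case (e)); your proposal uses Lemma~\ref{lem-special-three} only for $n=3$ and does not address this parity issue for $n\ge4$. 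To close the gap you would need either the paper's elementary-divisor argument or a genuinely new construction that controls more than the characteristic polynomial.
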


\begin{proof}
	We consider only the first case, as the second follows by a similar argument. Let $A=T(I)\in\cB_n$. Suppose that $A\neq \pm I$. Without loss of generality, we may assume that $A\in\cB_n$ is in the rational canonical form described in Proposition~\ref{prop-rev-class}. Note that for every invertible matrix $P\in M_n(\IF)$ we have $P^{-1}\cB_nP\subseteq \cB_n$ and $P^{-1}\mathfrak{sl}_n(\IF)P\subseteq \mathfrak{sl}_n(\IF)$. Hence, without loss of generality, we may assume that $P=I$; that is, $T(X)=rAX$ for all $X\in\mathfrak{sl}_n(\IF)$, where $A\in\cB_n$ is in rational canonical form. Let $m$ be the maximum size of the indecomposable companion matrix appearing in the rational canonical form of $A$ given in~\eqref{eq-rev-class}. In the following cases, we construct a matrix $X\in\cB_n$ such that $T(X)\notin\cB_n$, which yields a contradiction.
	
	\textbf{Case (a).} Let $m\ge 3$. Then, up to similarity,
	\begin{equation*}
		A=A_1\oplus A_2,
	\end{equation*}
	where $A_2\in M_{n-m}(\IF)$ and $A_1=C_f$ or $A_1=C_g\oplus C_{\widetilde g}$, where $C_f$ and $C_g$ are companion matrices as considered in Lemma~\ref{lem-special-nil-comp-type1} and Lemma~\ref{lem-special-nil-comp-type2}, respectively. Then there exists a strictly upper triangular matrix $N_1$ such that $A_1(I+N_1)\notin\cB_m$. Consider the strictly upper triangular matrix $N=N_1\oplus O$, where $O\in M_{n-m}(\IF)$ is the zero matrix of order $n-m$. Then for every nonzero $r\in\IF$, the matrix $X=I+r^{-1}N$ is a unipotent matrix in $\cB_n$ such that
	$$T(X)=T(I+r^{-1}N)=A(I+N)=A_1(I+N_1)\oplus(A_2(I+O))=A_1(I+N_1)\oplus A_2.$$
	Since $A_1(I+N_1)\notin\cB_m$ and $A_2\in\cB_{n-m}$, it follows that $T(X)\notin\cB_n$.
	
	\textbf{Case (b).} Let $m=2$. Up to similarity, we may write
	\begin{equation*}
		A = A_1 \oplus A_2,
	\end{equation*}
	where $A_1 = C_g \oplus C_{\widetilde g}$ and $A_2 \in M_{n-4}(\IF)$, with $C_g \in M_2(\IF)$ the companion matrix considered in Lemma~\ref{lem-special-nil-comp-type2}. Using an argument similar to that in Case~(a), we can construct a unipotent matrix $X \in \cB_n$ such that $T(X) \notin \cB_n$.
	
	\textbf{Case (c).}	Let \(m\le 2\), and suppose that the rational canonical form of \(A\) given in~\eqref{eq-rev-class} contains 
	\(\begin{psmallmatrix}1&0\\0&-1\end{psmallmatrix}\) as a diagonal block,
	arising as the direct sum of two \(1\times1\) blocks. Since \(\mathbb{F}\neq \mathbb{Z}_2\), by Remark~\ref{rem-decomposable-two}, up to similarity we may write
	\[
	A=A_1\oplus A_2,
	\]
	where \(A_1=\begin{psmallmatrix}0&1\\1&0\end{psmallmatrix}\in M_2(\mathbb{F})\) and \(A_2\in M_{n-2}(\mathbb{F})\). By Remark~\ref{rem-decomposable-two}, there exists a strictly upper triangular matrix \(N_1\) such that \(A_1(I+N_1)\notin\mathcal{B}_2\). Using an argument similar to that in Case~(a), we obtain a unipotent matrix \(X\in\mathcal{B}_n\) such that \(T(X)\notin\mathcal{B}_n\).
	
	\textbf{Case (d).} Let $m=2$, and suppose that $A$ is a direct sum of matrices of the form
	\begin{equation*}
		\begin{psmallmatrix}a&0\\0&a^{-1}\end{psmallmatrix},\quad
		\begin{psmallmatrix}0&-1\\1&-b\end{psmallmatrix},\quad
		I_2,
	\end{equation*}
	or of the form
	\begin{equation*}
		\begin{psmallmatrix}a&0\\0&a^{-1}\end{psmallmatrix},\quad
		\begin{psmallmatrix}0&-1\\1&-b\end{psmallmatrix},\quad
		-I_2,
	\end{equation*}
	where $a,b\in\IF$ with $a\neq0,\pm1$, and $I_2$ is the identity matrix in $M_2(\IF)$. By Remark~\ref{rem-special-two-decomp}, up to similarity we may write $A=A_1\oplus A_2\oplus A_3$, where
	$$A_1=\begin{psmallmatrix}0&-1\\1&-a_1\end{psmallmatrix},\qquad
	A_2=\bigoplus_{i=1}^s\begin{psmallmatrix}0&-1\\1&-b_i\end{psmallmatrix},\qquad
	A_3=\alpha I=\bigoplus_{i=1}^k\begin{psmallmatrix}\alpha&\\&\alpha\end{psmallmatrix},$$
	with $\alpha=\pm1$, $s=(n-2-2k)/2$, $a_1\in\IF$, and each $b_i\in\IF$. By Lemma~\ref{lem-special-two}, there exists a trace-zero matrix $X_1\in\cB_2$ such that $rA_1X_1\notin\cB_2$ for every nonzero $r$. Let
	$X_2=\bigoplus_{i=1}^s\begin{psmallmatrix}0&1\\1&0\end{psmallmatrix}\in\cB_{2s}$ and $X_3=\bigoplus_{i=1}^k\begin{psmallmatrix}-\alpha&0\\0&\alpha\end{psmallmatrix}\in\cB_{2k}$. Then for every nonzero $r$ we have
	$rA_2X_2=\bigoplus_{i=1}^s\begin{psmallmatrix}-r&0\\rb_i&r\end{psmallmatrix}$,
	which is similar to $\bigoplus_{i=1}^s\begin{psmallmatrix}-r&\\&r\end{psmallmatrix}$, and
	$rA_3X_3=\bigoplus_{i=1}^k\begin{psmallmatrix}-r&\\&r\end{psmallmatrix}$.
	Consider the trace-zero matrix $X=X_1\oplus X_2\oplus X_3\in\cB_n$. Then $T(X)=rAX=(rA_1X_1)\oplus(rA_2X_2)\oplus(rA_3X_3)$, and by Proposition~\ref{prop-rev-class} we conclude that $T(X)\notin\cB_n$.
	
	\textbf{Case (e).} Let $m=2$, and suppose that, up to similarity, $A$ has the form
	$$A=A_1\oplus A_2\oplus A_3,$$
	where $A_1=\begin{psmallmatrix}0&-1\\1&-a_1\end{psmallmatrix}\oplus\alpha$, $A_2=\bigoplus_{i=1}^s\begin{psmallmatrix}0&-1\\1&-b_i\end{psmallmatrix}\in M_{n-3-2k}(\IF)$, and $A_3=\bigoplus_{i=1}^k\begin{psmallmatrix}\alpha&\\&\alpha\end{psmallmatrix}$, with $s=(n-3-2k)/2$, $\alpha=\pm1$, $a_1\in\IF$, and each $b_i\in\IF$. By Lemma~\ref{lem-special-three}, there exists a trace-zero matrix $X_1\in\cB_3$ such that $rA_1X_1\notin\cB_3$ for every nonzero $r$. Let $X_2=\bigoplus_{i=1}^s\begin{psmallmatrix}0&1\\1&0\end{psmallmatrix}\in\cB_{2s}$ and $X_3=\bigoplus_{i=1}^k\begin{psmallmatrix}-1&0\\0&1\end{psmallmatrix}\in\cB_{2k}$. Consider the trace-zero matrix $X=X_1\oplus X_2\oplus X_3\in\cB_n$. Using an argument similar to that in Case~(d), we obtain $T(X)\notin\cB_n$.
	
	By Proposition~\ref{prop-rev-class}, if $A\neq\pm I$, then $A\in\cB_n$ is similar to one of the five cases described above. Therefore, if $A\neq\pm I$, we can construct a unipotent or trace-zero matrix $X\in\cB_n$ such that $T(X)\notin\cB_n$, contradicting the assumption that $T(\cB_n)\subseteq\cB_n$. Hence $T(I)=A=\pm I$, which completes the proof.
\end{proof}

Recall that $\cS_n=\cB_n$, $\cC_n$, or $\cD_n$. Moreover, $I+tN\in\cB_n\subseteq\cS_n$ for all $t\in\IF$ and $N\in\mathcal N$. We now observe the following important fact: if $T$ is a bijective linear map from $M_n(\IF)$ to $M_n(\IF)$ satisfying $T(\cS_n)\subseteq\cS_n$, then
$$T(I)^{-1}T(\mathcal N)\subseteq\mathcal N.$$

To prove this, we use the following fundamental result, which characterizes a nilpotent matrix $N$ in terms of the determinant of the matrix pencil defined by $I$ and $N$.

\begin{proposition}\label{lem-nil-det-criteria}
	Let $\IF$ be a field. Then $N\in M_n(\IF)$ is nilpotent if and only if $\det(I+tN)=1$ for all $t\in\IF$.
\end{proposition}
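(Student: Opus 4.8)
The plan is to prove both directions by connecting nilpotency to the factorization of the characteristic polynomial over an algebraic closure. The forward direction is immediate: if $N$ is nilpotent, all its eigenvalues are $0$, so for any $t \in \IF$ the matrix $tN$ is nilpotent as well, hence $I + tN$ is unipotent and $\det(I + tN) = 1$. (Alternatively, one can expand $\det(I+tN)$ as a polynomial in $t$ whose coefficients are, up to sign, the elementary symmetric functions of the eigenvalues of $N$, all of which vanish.)

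For the converse, suppose $\det(I + tN) = 1$ for all $t \in \IF$. First I would pass to an algebraic closure $\overline{\IF}$, which does not change whether $N$ is nilpotent. Write the eigenvalues of $N$ (with multiplicity) in $\overline{\IF}$ as $\l_1, \dots, \l_n$. Then $\det(I + tN) = \prod_{i=1}^n (1 + t\l_i)$. Expanding, this equals $\sum_{k=0}^n e_k(\l_1, \dots, \l_n)\, t^k$, where $e_k$ is the $k$-th elementary symmetric polynomial. The hypothesis says this polynomial identity in $t$ holds for all $t \in \IF$. If $\IF$ is infinite, a polynomial of degree $\le n$ vanishing on infinitely many points is identically zero, so $e_k(\l_1,\dots,\l_n) = 0$ for all $k \ge 1$; since the characteristic polynomial of $N$ is $\lambda^n - e_1 \lambda^{n-1} + \dots \pm e_n = \lambda^n$, we conclude $N^n = O$ by Cayley--Hamilton.

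The main obstacle is the case where $\IF$ is finite: a nonzero polynomial of degree up to $n$ can certainly vanish on all of $\IF$ when $|\IF| \le n$. To handle this I would argue more directly about the structure of $N$ rather than about a single polynomial identity. Here is one route. Since $\det(I + tN) = 1$ for all $t \in \IF$, in particular (taking $t=1$) $\det(I+N) \ne 0$, and more generally $I + tN$ is invertible for every $t \in \IF$; equivalently $N$ has no nonzero eigenvalue lying in $\IF$, i.e. $0$ is the only root of the characteristic polynomial $p(\lambda) = \det(\lambda I - N)$ that belongs to $\IF$. Now I would use the rational canonical form (Proposition~\ref{eq-rational-form}): $N$ is similar to a direct sum of companion matrices $C_{f_i}$ with each $f_i$ a power of a monic irreducible $q_i$. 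It suffices to show each $q_i = \lambda$. Suppose some $q_i(\lambda) \ne \lambda$; I want to derive a contradiction with $\det(I + tN) = 1$ holding for \emph{all} $t \in \IF$.

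For that final step, I would pass to the field extension $\IK = \IF[\lambda]/(q_i)$ of degree $d = \deg q_i \ge 1$, in which $q_i$ acquires a root $\mu \ne 0$ (nonzero because $q_i \ne \lambda$ forces $q_i(0) \ne 0$ by irreducibility). Then $\mu$ is an eigenvalue of $N$ over $\IK$, and more precisely $\det(tI + N)$, viewed over $\IK$, is divisible by $(t + \mu)$ raised to a positive power (coming from the block $C_{f_i}$). Choosing $t = -\mu^{-1}\cdot(\text{something})$ won't land in $\IF$, so instead I would compare the polynomial $g(t) := \det(I + tN) - 1 \in \IF[t]$: it has degree $\le n$, is nonzero over $\overline{\IF}$ (since $\det(I+tN)$ has a root at $t = -\mu^{-1} \in \IK \setminus\{0\}$, whereas the constant polynomial $1$ does not), yet by hypothesis $g$ vanishes on all of $\IF$. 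This is only a contradiction when $|\IF| > n$. So for small finite fields I would instead enlarge the field first: replace $\IF$ by $\IF_{q^m}$ with $q^m > n$, observe that the hypothesis $\det(I + tN) = 1$ for all $t \in \IF$ together with the fact that $\det(I+tN)-1 \in \IF[t]$ has degree $\le n < q^m$ does \emph{not} automatically extend, so this does not work directly either. The clean fix: note $\det(I + tN) - 1$ is a polynomial in $t$ of degree at most $n$ over $\IF$; if it vanishes for all $t \in \IF$ but is not the zero polynomial, then $|\IF| \le n$, i.e. $|\IF| < \infty$ is small. In that remaining finitely many small cases one can check the claim by a separate argument: the polynomial $\det(I+tN) - 1 \in \IF[t]$ must be divisible by $\prod_{a \in \IF}(t - a) = t^{|\IF|} - t$; writing $\det(I+tN) = \sum_{k} e_k\, t^k$ with $e_0 = 1$, divisibility by $t^{|\IF|} - t$ forces in particular $e_1 = 0$ (the coefficient of $t$ in $\det(I+tN)-1$ must match a multiple of $t^{|\IF|}-t$, which has no $t^1$ term unless $|\IF| = 2$; when $|\IF|=2$ one checks $e_1 = \tr N = 0$ separately from $\det(I+N)=1$ over $\IF_2$ meaning $\tr N + \det N = 0$, etc.). This case analysis is fiddly; I expect it to be the hard part, and I would streamline it by instead invoking: $N$ is nilpotent over $\IF$ iff it is nilpotent over every extension $\IK$; over a sufficiently large $\IK \supseteq \IF$ with $|\IK| > n$, the hypothesis gives $\det_{\IK}(I + tN) = 1$ for $t \in \IF$ — but we need it for $t \in \IK$. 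So the genuinely efficient proof uses the eigenvalue/symmetric-function argument for infinite fields and, for finite fields, the divisibility $\bigl(t^{|\IF|}-t\bigr) \mid \bigl(\det(I+tN)-1\bigr)$ in $\IF[t]$ combined with degree bounds to pin down $e_1 = \dots = e_n = 0$; I would present exactly that.
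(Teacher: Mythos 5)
Your forward direction and your infinite-field argument for the converse are exactly the paper's proof: expand $\det(I+tN)=\sum_{k}e_k t^k$ with $e_k$ the elementary symmetric functions of the eigenvalues of $N$, conclude $e_k=0$ for all $k\ge 1$, so the characteristic polynomial is $\lambda^n$, and finish with Cayley--Hamilton. (The paper runs this single argument over an arbitrary field, silently passing from ``$\det(I+tN)-1$ vanishes at every $t\in\IF$'' to ``it is the zero polynomial''; your instinct that this step needs $|\IF|>n$, or a reading of the hypothesis as a formal polynomial identity in an indeterminate $t$, is correct.)

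The genuine problem is the patch you ultimately commit to for small finite fields. Divisibility of $\det(I+tN)-1$ by $t^{|\IF|}-t$ does \emph{not} force $e_1=\dots=e_n=0$ when $n\ge|\IF|$: the quotient can be any polynomial of degree up to $n-|\IF|$, so nonzero coefficients survive. Worse, no patch can exist, because the statement as literally written fails over small fields. Over $\IF=\IF_2$ take $N=\begin{psmallmatrix}0&1\\1&1\end{psmallmatrix}$; then $\det(I+tN)=1+t+t^2$, which equals $1$ at both $t=0$ and $t=1$, yet $N$ is invertible and hence not nilpotent. (Similarly, over $\IF_3$ with $n=3$ one can arrange $\det(I+tN)=1+(t^3-t)$ using a suitable companion matrix.) So the finite-field branch of your proof cannot be completed; the proposition is only correct if either $|\IF|>n$ or the hypothesis is read as a polynomial identity in an indeterminate $t$. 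Under either reading, the symmetric-function argument you (and the paper) give for infinite fields is already complete, and the entire finite-field case analysis is unnecessary.
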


\begin{proof}
	Since every nilpotent matrix $N$ is similar to a strictly upper triangular matrix in $M_n(\IF)$, we have $\det(I+tN)=1$ for all $t\in\IF$.
	
	Conversely, suppose that $\det(I+tN)=1$ for all $t\in\IF$. Write
	\[
	\det(I+tN)=c_nt^n+c_{n-1}t^{n-1}+\dots+c_1t+c_0,
	\]
	where $c_k=E_k(\lambda_1,\dots,\lambda_n)$ denotes the $k$th elementary symmetric function of the eigenvalues $\lambda_1,\dots,\lambda_n$ of $N$. Since $\det(I+tN)-1$ is the zero polynomial for all $t\in\IF$, we obtain $c_0=1$ and $c_i=0$ for all $1\le i\le n$. Hence
	\[
	\det(\lambda I-N)
	=\lambda^{n}\det(I-\lambda^{-1}N)
	=c_0\lambda^n-c_1\lambda^{n-1}+c_2\lambda^{n-2}+\dots+(-1)^nc_n
	=\lambda^n.
	\]
	By the Cayley--Hamilton theorem, it follows that $N^n=0$. Hence $N$ is nilpotent.
\end{proof}

Along the same lines as the proof of Proposition~\ref{lem-nil-det-criteria}, we note the following observation.

\begin{remark}\label{rem-nil-neg-det-criteria}
	Let $N\in M_n(\IF)$ such that $\det(I+tN)\in\{-1,1\}$ for all $t\in\IF$. Since $(\det(I+tN))^2-1$ is the zero polynomial for all $t\in\IF$, the same argument as in Proposition~\ref{lem-nil-det-criteria} shows that $N$ is nilpotent.
\end{remark}

The following result is an immediate consequence of Remark~\ref{rem-nil-neg-det-criteria}.

\begin{corollary}\label{new_cor}
	Let $\IF$ be a field, and let $T:M_n(\IF)\rightarrow M_n(\IF)$ be a bijective linear map such that $T(\cS_n)\subseteq\cS_n$. Then $T(I)^{-1}T(N)$ is nilpotent for every nilpotent matrix $N$.
\end{corollary}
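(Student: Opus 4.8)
The plan is to reduce the statement to the nilpotency criterion recorded in Remark~\ref{rem-nil-neg-det-criteria}, namely that a matrix $M\in M_n(\IF)$ is nilpotent as soon as $\det(I+tM)\in\{-1,1\}$ for all $t\in\IF$. So the whole argument consists of producing, for a given nilpotent $N$, a one-parameter family inside $\cS_n$ whose determinants are $\pm1$.

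First I would record the two elementary facts that drive the argument. On the one hand, by Proposition~\ref{prop-rev-class} one has $I+tN\in\cB_n\subseteq\cS_n$ for every $t\in\IF$ and every nilpotent $N$. On the other hand, every matrix in $\cS_n$ is a product of involutions, and an involution has determinant $\pm1$ (in a field, $x^2=1$ forces $x=\pm1$), so every matrix in $\cS_n$ has determinant $\pm1$; in particular $T(I)\in\cS_n$ is invertible, so $T(I)^{-1}T(N)$ is well defined.

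Next, fix a nilpotent matrix $N$ and set $M:=T(I)^{-1}T(N)$. By linearity of $T$,
\[
T(I+tN)=T(I)+tT(N)=T(I)\bigl(I+tM\bigr)\qquad\text{for all }t\in\IF,
\]
and since $T(\cS_n)\subseteq\cS_n$ the left-hand side lies in $\cS_n$, hence has determinant in $\{-1,1\}$. As $\det(T(I))\in\{-1,1\}$ as well, multiplicativity of the determinant gives $\det(I+tM)=\det(T(I))^{-1}\det\bigl(T(I)(I+tM)\bigr)\in\{-1,1\}$ for every $t\in\IF$. Remark~\ref{rem-nil-neg-det-criteria} then forces $M=T(I)^{-1}T(N)$ to be nilpotent, which is the claim.

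I do not expect a genuine obstacle here: the only point requiring a moment's care is the invertibility of $T(I)$, and this is immediate once one observes that each of the candidate sets $\cB_n$, $\cC_n$, $\cD_n$ consists of products of involutions, hence of matrices of determinant $\pm1$. Everything else is linearity of $T$ together with the determinant characterization of nilpotency already established.
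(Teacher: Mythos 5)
Your argument is correct and coincides with the paper's own proof: both write $T(I+tN)=T(I)(I+tT(I)^{-1}T(N))$, use that elements of $\cS_n$ have determinant $\pm1$ to get $\det(I+tT(I)^{-1}T(N))\in\{-1,1\}$ for all $t$, and conclude via Remark~\ref{rem-nil-neg-det-criteria}. No gaps.
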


\begin{proof}
	Let $A=T(I)$. Since $A\in\cS_n$, we have $\det(A)\in\{-1,1\}$. If $N\in\mathcal N$, then $I+tN\in\cS_n$ for all $t\in\IF$, and hence
	\[
	T(I+tN)=T(I)+tT(N)=A+tT(N)\in\cS_n.
	\]
	Thus $\det(A+tT(N))\in\{-1,1\}$ for all $t\in\IF$. Since
	$
	\det(A+tT(N))=\det(A)\det(I+tA^{-1}T(N)),
	$
	and $\det(A)\in\{-1,1\}$, we obtain $\det(I+tA^{-1}T(N))\in\{-1,1\}$ for all $t\in\IF$. By Remark~\ref{rem-nil-neg-det-criteria}, it follows that $A^{-1}T(N)$ is nilpotent.
\end{proof}

Recall that the subspace $\mathfrak{sl}_n(\IF)$ of $M_n(\IF)$ is spanned by the set $\mathcal{N}$ of nilpotent matrices. The following result describes the restriction of bijective linear preservers of $\cS_n$ to $\mathfrak{sl}_n(\IF)$.

\begin{lemma}\label{lem-main-all-form}
	Let $n\ge 2$ and let $T: M_n(\IF) \rightarrow M_n(\IF)$ be a bijective linear map such that $T(\cS_{n}) \subseteq \cS_{n}$. Then there exist a scalar $r \in \IF$ with $r^{2n}=1$ and an invertible matrix $P\in M_n(\IF)$ such that the restriction of $T$ to $\mathfrak{sl}_n(\IF)$ has one of the following forms:
	\begin{equation*}
		X \mapsto r T(I)PXP^{-1} \quad \text{or} \quad X \mapsto r T(I)PX^tP^{-1}.
	\end{equation*}
	Furthermore, if $n$ is a multiple of the characteristic of $\IF$, then $r=1$.
\end{lemma}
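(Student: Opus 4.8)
The plan is to combine Corollary~\ref{new_cor} with the known classification of bijective linear preservers of the nilpotent cone (Proposition~\ref{prop-nil-preserver}). Write $A = T(I)$; since $A \in \cS_n$ we have $\det A \in \{-1,1\}$, so $A$ is invertible. By Corollary~\ref{new_cor}, the map $S := A^{-1}T(\cdot)$ satisfies $S(\cN) \subseteq \cN$. Because $A^{-1}$ is invertible and $T$ is bijective, $S$ is a bijective linear map $M_n(\IF) \to M_n(\IF)$ with $S(\cN) \subseteq \cN$. Proposition~\ref{prop-nil-preserver} then gives a nonzero scalar $r \in \IF$ and an invertible $P \in M_n(\IF)$ such that the restriction of $S$ to $\mathfrak{sl}_n(\IF)$ is $X \mapsto rPXP^{-1}$ or $X \mapsto rPX^tP^{-1}$; equivalently, the restriction of $T$ to $\mathfrak{sl}_n(\IF)$ has the stated form $X \mapsto r\,T(I)PXP^{-1}$ or $X \mapsto r\,T(I)PX^tP^{-1}$.

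It remains to pin down the constraint $r^{2n}=1$, and the refinement $r=1$ when $\mathrm{char}(\IF) \mid n$. The key observation is that although $I \notin \mathfrak{sl}_n(\IF)$ (unless $\mathrm{char}(\IF) \mid n$), many specific trace-zero matrices that are similar to $\pm I$-type pieces or to diagonalizable matrices with prescribed spectrum do lie in $\mathfrak{sl}_n(\IF)$, and applying $T$ to them forces multiplicative conditions on $r$. Concretely, I would take a trace-zero \emph{involution} $J \in \mathfrak{sl}_n(\IF)$ — this requires $n$ even, so $J$ similar to $I_{n/2} \oplus (-I_{n/2})$ — and note $J \in \cB_n \subseteq \cS_n$ and also $-J, J, \ldots$; then $T(J) = r\,A\,PJP^{-1}$ (or the transpose version). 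Evaluating determinants: $\det T(J) = r^n \det(A)\det(J) \in \{-1,1\}$ forces $r^n \in \{-1,1\}$, hence $r^{2n}=1$. For odd $n$ one argues instead with a trace-zero matrix similar to $\mathrm{diag}(\zeta, \zeta^{-1}, 1, \dots, 1)$ scaled appropriately, or more simply observes that $\mathfrak{sl}_n(\IF)$ always contains the trace-zero matrix $E_{1,1}-E_{2,2}$ plus suitable companion-type elements lying in $\cB_n$ whose images under the scaled map must again have determinant $\pm1$; in every case the determinant identity $\det(rAPXP^{-1}) = r^n\det(A)\det(X)$ with $\det X = \pm 1$ and $\det(AX$-image$)=\pm1$ yields $r^{n}=\pm1$ and hence $r^{2n}=1$.

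For the last sentence, suppose $\mathrm{char}(\IF)=p$ and $p \mid n$. Then $I \in \mathfrak{sl}_n(\IF)$, so the formula for the restriction applies to $X = I$: we get $T(I) = r\,T(I)\,P I P^{-1} = r\,T(I)$, and since $T(I)=A$ is invertible this forces $r = 1$. This is the cleanest part and should be stated first once the general form is in hand.

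The main obstacle is the odd-$n$ (and more generally $p \nmid n$) case of the $r^{2n}=1$ claim, where $I$ is not available inside $\mathfrak{sl}_n(\IF)$ and one must instead exhibit enough explicit trace-zero members of $\cS_n$ whose images, under the scaling by $r$, are constrained. The safe route is to use a single well-chosen trace-zero matrix $X_0 \in \cB_n$ that is a unit (i.e.\ $\det X_0 = \pm 1$) — for instance a block direct sum of $2\times 2$ blocks $\begin{psmallmatrix}0&1\\1&0\end{psmallmatrix}$ together with one block handling the parity of $n$ — so that both $X_0$ and its image $T(X_0)=rAPX_0P^{-1}$ lie in $\cS_n$, hence both have determinant in $\{-1,1\}$; comparing gives $|r^n \det A| = 1$ in the sense that $r^n\det(A) = \pm 1$, and as $\det A = \pm 1$ we conclude $r^{n} = \pm 1$, i.e.\ $r^{2n}=1$. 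One must check that such an $X_0$ with the required determinant and membership in $\cB_n$ exists for every $n \ge 2$, which follows from Proposition~\ref{prop-rev-class} since $\begin{psmallmatrix}0&1\\1&0\end{psmallmatrix}$ is the companion matrix of the self-reciprocal polynomial $x^2-1$ and one odd leftover coordinate can be absorbed into a $1\times 1$ block $(\pm 1)$.
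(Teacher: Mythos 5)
Your overall strategy coincides with the paper's: compose $T$ with $T(I)^{-1}$, invoke Corollary~\ref{new_cor} to see that the resulting bijective map preserves $\mathcal N$, apply Proposition~\ref{prop-nil-preserver} to obtain the stated form on $\mathfrak{sl}_n(\IF)$, derive $r^{2n}=1$ by a determinant computation on a trace-zero unit in $\cB_n$, and get $r=1$ when $\mathrm{char}(\IF)$ divides $n$ by substituting $X=I$ into the formula. The first and last steps are correct and essentially identical to the paper's proof.

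There is, however, a concrete gap in your argument for $r^{2n}=1$ when $n$ is odd. The witness you settle on, $X_0=\bigoplus_{i=1}^{(n-1)/2}\begin{psmallmatrix}0&1\\1&0\end{psmallmatrix}\oplus(\pm1)$, has trace $\pm1\neq 0$, so it does not lie in $\mathfrak{sl}_n(\IF)$, and the formula $T(X_0)=r\,A\,PX_0P^{-1}$ is simply not available for it; the determinant comparison cannot be run on this matrix. Your fallback suggestions do not repair this: $\mathrm{diag}(\zeta,\zeta^{-1},1,\dots,1)$ is generally not trace-zero (and rescaling it destroys membership in $\cB_n$, since the eigenvalues no longer pair into reciprocals), while ``$E_{1,1}-E_{2,2}$ plus suitable companion-type elements'' is too vague to check. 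The paper avoids the parity split entirely by taking the single matrix $X=E_{1,2}+\cdots+E_{n-1,n}+E_{n,1}$, the $n$-cycle permutation matrix: it is trace-zero for every $n\ge 2$, has determinant $(-1)^{n-1}$, and lies in $\cB_n$ because a permutation matrix is similar to its inverse. Substituting this $X$ for your $X_0$ makes your determinant computation go through uniformly and closes the gap; alternatively, for odd $n\ge 3$ you could replace your last two blocks by the $3$-cycle $E_{1,2}+E_{2,3}+E_{3,1}$, which is trace-zero with determinant $1$.
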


\begin{proof}
	Let $T: M_n(\IF) \rightarrow M_n(\IF)$ be a bijective linear map satisfying $T(\cS_{n}) \subseteq \cS_{n}$, where $n\ge 2$. Let $A:=T(I)\in\cS_n$. Using Corollary~\ref{new_cor}, we obtain $A^{-1}T(N)\in\mathcal{N}$ for all $N\in\mathcal{N}$. Define the map $\phi : M_n(\IF) \rightarrow M_n(\IF)$ by
	\begin{equation*}
		\phi(X)=A^{-1}T(X)\quad \text{for all } X\in M_n(\IF).
	\end{equation*}
	Then $\phi$ is a bijective linear map such that $\phi(I)=I$ and $\phi(\mathcal{N})\subseteq\mathcal{N}$. Since the subspace $\mathfrak{sl}_n(\IF)$ of trace-zero matrices in $M_n(\IF)$ is spanned by the set $\mathcal{N}$ of nilpotent matrices, it follows that $\phi(\mathfrak{sl}_n(\IF))\subseteq\mathfrak{sl}_n(\IF)$.
	
	In view of Proposition~\ref{prop-nil-preserver}, there exist a nonzero scalar $r\in\IF$ and an invertible matrix $P\in M_n(\IF)$ such that the restriction of $\phi$ to $\mathfrak{sl}_n(\IF)$ has one of the following forms:
	$$X\mapsto rPXP^{-1} \quad \text{or} \quad X\mapsto rPX^{t}P^{-1}.$$
	Therefore, the restriction of $T$ to $\mathfrak{sl}_n(\IF)$ has one of the following forms:
	\begin{equation*}
		X \mapsto rAPXP^{-1} \quad \text{or} \quad X \mapsto rAPX^tP^{-1}.
	\end{equation*}
	
	Now consider $X = E_{1,2} + \dots + E_{n-1,n} + E_{n,1} \in \mathfrak{sl}_n(\IF)$. Then $X \in \cB_{n} \subseteq \cS_n$ and $\det(X)=(-1)^{n-1}$. Since $T(X)\in\cS_{n}\subseteq\cD_{n}$, we have
	$$\det(T(X))=r^n\det(A)\det(X)=\pm1.$$
	Using $\det(A)=\pm1$, we obtain $r^n=\pm1$, that is, $r^{2n}=1$.
	
	If $n$ is a multiple of $\mathrm{char}(\IF)$, then the identity matrix $I\in\cS_n$ has trace zero. Hence $T(I)=rT(I)$, which implies $r=1$. This completes the proof.
\end{proof}

We now determine the scalar $r$ for the bijective linear preservers of $\cS_n$ described in Lemma~\ref{lem-main-all-form}.

\begin{lemma}\label{lem-main-all-unital-scalar}
	Let $n \ge 2$, $\mathrm{char}(\IF) \neq 2$, and let $T: M_n(\IF) \rightarrow M_n(\IF)$ be a bijective linear map such that $T(\cS_{n}) \subseteq \cS_{n}$. Suppose that there exist a nonzero scalar $r \in \IF$ and an invertible matrix $P \in M_n(\IF)$ such that the restriction of $T$ to $\mathfrak{sl}_n(\IF)$ has one of the following forms:
	\begin{equation*}
		X \mapsto r T(I)PXP^{-1} \quad \text{or} \quad X \mapsto r T(I)PX^tP^{-1}.
	\end{equation*}
	Then
	$$
	r=
	\begin{cases}
		\pm 1 & \text{if } n=2, \\
		1 & \text{if } n \ge 3 .
	\end{cases}
	$$
\end{lemma}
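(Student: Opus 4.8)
The plan is to collapse the hypothesis into a single scalar condition on determinants and then feed in explicit test matrices lying in $\cB_n\subseteq\cS_n$ to pin down $r$. By Lemma~\ref{lem-main-all-form} I already have $r^{2n}=1$, and if $\mathrm{char}(\IF)\mid n$ then $r=1$; so I may assume $\mathrm{char}(\IF)\nmid n$, so that $n$ is invertible in $\IF$ and $M_n(\IF)=\IF I\oplus\mathfrak{sl}_n(\IF)$. Writing $A=T(I)\in\cS_n$ (so $\det A\in\{-1,1\}$) and decomposing $Y=\tfrac{\tr Y}{n}I+\big(Y-\tfrac{\tr Y}{n}I\big)$ into its scalar and trace-zero parts, the assumed form of $T$ on $\mathfrak{sl}_n(\IF)$ yields, in both the standard and the transpose case,
\begin{equation*}
\det\big(T(Y)\big)=\det(A)\,\det\big(\Psi(Y)\big),\qquad \Psi(Y):=rY+(1-r)\tfrac{\tr Y}{n}\,I,
\end{equation*}
since conjugation by $P$ and transposition both leave the determinant unchanged. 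As $\det(A)=\pm1$, for every $Y\in\cB_n\subseteq\cS_n$ we get $T(Y)\in\cS_n\subseteq\cD_n$, hence $\det(\Psi(Y))\in\{-1,1\}$. Note $\Psi$ fixes $I$ and scales $\mathfrak{sl}_n(\IF)$ by $r$, so if $Y$ has eigenvalues $\lambda_1,\dots,\lambda_n$ and $\beta=\tfrac{\tr Y}{n}$, then $\Psi(Y)$ has eigenvalues $r\lambda_i+(1-r)\beta$.

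For $n\ge3$ the idea is to engineer a zero eigenvalue. Assuming $r\neq1$, set $\beta_0=\tfrac{r}{r-1}$ and $c_0=n\beta_0-(n-2)\in\IF$, and consider
\begin{equation*}
Y_0=C_{x^2-c_0x+1}\oplus I_{n-2}.
\end{equation*}
The quadratic $x^2-c_0x+1$ is self-reciprocal and $x-1$ is self-reciprocal, so by Proposition~\ref{prop-rev-class} we have $Y_0\in\cB_n\subseteq\cS_n$; moreover the choice of $c_0$ gives $\tfrac{\tr Y_0}{n}=\beta_0$. On the $I_{n-2}$ summand $\Psi$ acts by the scalar $r+(1-r)\beta_0=0$, so $\Psi(Y_0)$ is singular and $\det(\Psi(Y_0))=0\notin\{-1,1\}$, contradicting the first paragraph. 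Hence $r=1$ when $n\ge3$, as claimed.

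For $n=2$ there is no identity summand to annihilate, so a direct computation is needed. Taking $Y_c=C_{x^2-cx+1}\in\cB_2$ for $c\in\IF$, one computes $\det(\Psi(Y_c))=r^2+\tfrac{1-r^2}{4}\,c^2$. If $r^2\neq1$ this is an injective affine function of the square $c^2$, so as $c$ ranges over $\IF$ it assumes more than two values whenever $|\IF|>3$, which is impossible since all these values must lie in $\{-1,1\}$; and when $|\IF|=3$ the group $\IF^*$ has order $2$, forcing $r^2=1$ automatically. Thus $r=\pm1$ when $n=2$, which is exactly the stated conclusion.

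The main obstacle is twofold. First, the test matrices must be chosen so that they lie in $\cB_n$, and hence in every $\cS_n$ uniformly across the three cases $\cS_n=\cB_n,\cC_n,\cD_n$; this is precisely what the self-reciprocal companion block $C_{x^2-c_0x+1}$ guarantees through Proposition~\ref{prop-rev-class}, and it is the reason I do not get to assume any further structure on $A=T(I)$. Second, the argument has to be uniform over all fields of characteristic $\neq2$, including small finite ones; the advantage of arranging $\det(\Psi(Y_0))=0$ for $n\ge3$ — rather than extracting a polynomial identity in the eigenvalues, which would require $|\IF|$ large to make a ``finitely many values forces constant'' step rigorous — is that a vanishing determinant contradicts membership in $\cD_n$ over any field whatsoever. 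The genuine delicacy therefore concentrates in the $n=2$ case, where the sharper conclusion $r=\pm1$ (rather than $r=1$) is forced and the small-field count must be checked explicitly, as above.
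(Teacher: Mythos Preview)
Your proof is correct and takes a genuinely different, more streamlined route than the paper's. The paper's argument splits on the nature of $\cS_n$: for $\cS_n\in\{\cC_n,\cD_n\}$ with $n\ge3$ it plugs in two specific test matrices $I+X$, $I+Y$ and solves the resulting determinant constraints, while for $\cS_n=\cB_n$ with $n\ge3$ it first invokes Lemma~\ref{lem-key-nonunital-B} to force $T(I)=\pm I$ and then exploits the tighter eigenvalue restrictions coming from membership in $\cB_n$. Your reduction to $\det(\Psi(Y))\in\{-1,1\}$ for all $Y\in\cB_n$ uses only the determinant information common to all three choices of $\cS_n$, so no case split on $\cS_n$ (and no appeal to Lemma~\ref{lem-key-nonunital-B}) is needed. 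The singularity trick---choosing $c_0$ so that the $I_{n-2}$ block of $\Psi(Y_0)$ vanishes identically---is more conceptual and avoids the paper's explicit determinant computations for $n\ge3$. For $n=2$ your one-parameter family $Y_c$ plus the square-counting argument is likewise cleaner than the paper's ad~hoc choices $X,Y,Z$ and the separate treatment of $\mathrm{char}(\IF)=3$. One small remark: your appeal to Proposition~\ref{prop-rev-class} for $Y_0\in\cB_n$ is slightly imprecise, since a self-reciprocal quadratic need not be a power of a self-reciprocal irreducible (e.g.\ $(x-\alpha)(x-\alpha^{-1})$ with $\alpha\neq\pm1$); the cleanest justification is simply that $C_{x^2-c_0x+1}$ is similar to its inverse because the polynomial equals its own reciprocal, so $Y_0$ is similar to $Y_0^{-1}$ and hence lies in $\cB_n$.
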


\begin{proof}
	Let $A:=T(I) \in \cS_n \subseteq \cD_n$. We consider the case $T(X)=rAPXP^{-1}$ for all $X\in \mathfrak{sl}_n(\IF)$, since the second case can be handled using a similar argument. Furthermore, by replacing $T$ with the map $X \mapsto T(P^{-1}XP)$ on $M_n(\IF)$ if necessary, we may assume that $T(I)=A$ and $T(X)=rAX$ for all $X\in\mathfrak{sl}_n(\IF)$.
	
	If $n$ is a multiple of the characteristic $\mathrm{char}(\IF)$, the result follows from Lemma~\ref{lem-main-all-form}. Now suppose that $n$ is not a multiple of $\mathrm{char}(\IF)$. To complete the proof, we consider the following cases depending on $n$.
	
	\textbf{Case (a).} Let $n=2$. Consider the trace-zero matrix
	$X = \begin{psmallmatrix} 1 & \\ & -1 \end{psmallmatrix} \in \cB_{2} \subseteq \cS_{2}$. Then
	$T(X)=A \begin{psmallmatrix} r & \\ & -r \end{psmallmatrix} \in \cS_{2} \subseteq \cD_{2}$.
	Since $A \in \cD_{2}$, it follows that
	$\begin{psmallmatrix} r & \\ & -r \end{psmallmatrix} \in \cD_{2}$, that is, $r^2 = \pm1$.
	
	Suppose that $r^2 = -1$. Consider
	$Y =  \begin{psmallmatrix} 0 & 1 \\ -1 & 1 \end{psmallmatrix} \in \cB_{2} \subseteq \cS_{2}$.
	Since $\mathrm{char}(\IF) \neq 2$, we can write
	$$
	Y= \frac{1}{2}I + \begin{psmallmatrix} -\frac{1}{2} & 1 \\ -1 & \frac{1}{2} \end{psmallmatrix}.
	$$
	This implies that
	$$
	T(Y)= \frac{1}{2} A + rA\begin{psmallmatrix} -\frac{1}{2} & 1 \\ -1 & \frac{1}{2} \end{psmallmatrix}
	= A \begin{psmallmatrix} \frac{1-r}{2} & r \\ -r & \frac{1+r}{2} \end{psmallmatrix}.
	$$
	Since $ r^2=-1$, we obtain
	$\det(T(Y)) = \frac{3r^2+1}{4} \det(A)= -\frac{1}{2}\det(A)$.
	Since $A \in  \cD_{2}$, we have $\det(T(Y)) = \pm \frac{1}{2}$.
	If $\mathrm{char}(\IF) \neq 3$, then $\det(T(Y)) \neq \pm1$, contradicting the fact that $T(Y) \in \cS_{2} \subseteq \cD_{2}$.
	
	Now assume that $\mathrm{char}(\IF) =3$. Consider
	$Z =  \begin{psmallmatrix} 0 & 1 \\ -1 & r \end{psmallmatrix} \in \cB_{2} \subseteq \cS_{2}$,
	where $r\in \IF$ satisfies $r^2=-1$. Then
	$$
	Z= \frac{r}{2}I + \begin{psmallmatrix} -\frac{r}{2} & 1 \\ -1 & \frac{r}{2} \end{psmallmatrix}.
	$$
	This implies that
	$$
	T(Z)
	=  A\begin{psmallmatrix} \frac{r-r^2}{2} & r \\ -r & \frac{r+r^2}{2} \end{psmallmatrix}
	= A\begin{psmallmatrix} \frac{r+1}{2} & r \\ -r & \frac{r-1}{2} \end{psmallmatrix}
	\in  \cS_{2} \subseteq \cD_{2}.
	$$
	Since $ r^2=-1$ and $\mathrm{char}(\IF) =3$, we obtain
	$\det(T(Z)) =  \frac{5r^2-1}{4} \det(A) =  \frac{-6}{4}\det(A) =0$, a contradiction.
	
	Therefore, $r^2 \neq -1$, which implies that $r=\pm 1$.
	
	\textbf{Case (b).} Let $n\ge3$. To prove that $r=1$, we consider two subcases depending on whether $\cS_{n}=\cC_n$ or $\cD_n$, or $\cS_{n}=\cB_n$.
	
	\textbf{Subcase (i).} Let $T(\cC_n)\subseteq\cC_n$ or $T(\cD_n)\subseteq\cD_n$. Suppose that $r \neq 1$.
	Let $X = X_0 \oplus O_{n-2}$ and $Y = Y_0 \oplus O_{n-3}$ be matrices in $\mathfrak{sl}_n(\IF)$, where
	$$
	X_0 =
	\begin{psmallmatrix} 0&2\\ 1&0 \end{psmallmatrix}
	\in M_{2}(\IF)
	\quad\text{and}\quad
	Y_0 =
	\begin{psmallmatrix}-1& \frac{1}{2}& 0\\
		-1& 0& 0\\
		0& 0& 1
	\end{psmallmatrix}
	\in M_{3}(\IF).
	$$
	Then $\det(I+X) = -1$ and $\det(I+Y) = 1$. Moreover,
	$T(I)+T(X) = A + rAX$ and $T(I)+T(Y) = A + rAY$.
	Thus
	$$
	\alpha:= \det(T(I + X)) = (1-2r^2) \det(A) 	\quad\text{and}\quad
	\beta:= \det (T(I + Y))
	=\left(1+\frac{r^2(r-1)}{2}\right)\det(A).
	$$
	
	Recall that \cite[Corollary 2]{Ba} implies $\cC_2=\cD_2$, so $I_2+X_0 \in \cC_2$. Moreover, by \cite[Corollary 3(b)]{Ba}, every non-scalar matrix in $M_{3}(\IF)$ belongs to $\cC_3$ if and only if it belongs to $\cD_3$. Since $I_3+Y_0$ is a non-scalar matrix in $\cD_3$, it follows that $I_3+Y_0$ belongs to $\cC_3$. Hence $I+X=(I_2+X_0)\oplus I_{n-2}$ and $I+Y=(I_3+Y_0)\oplus I_{n-3}$ belong to $\cC_n$, and therefore also to $\cD_n$.
	
	Since $I+X,I+Y\in\cC_{n}\subseteq\cD_n$, we have $T(I+X),T(I+Y)\in\cD_n$. This implies that $\alpha,\beta\in\{-1,1\}$. Since $A\in\cD_n$, we have $\det(A)\in\{-1,1\}$. Using $\alpha=(1-2r^2)\det(A)\in\{-1,1\}$ and $r\neq0,1$, we obtain $r=-1$. For $r=-1$, we have $\beta=0$, which contradicts $\beta\in\{\pm1\}$. Hence $r=1$.
	
	\textbf{Subcase (ii).} Let $T(\cB_n)\subseteq\cB_n$. By Lemma~\ref{lem-key-nonunital-B}, we have $A=T(I)=\pm I$. Hence, replacing $T$ with $-T$ if necessary, we may assume that $T(I)=I$ and $T(X)=rX$ for all $X\in\mathfrak{sl}_n(\IF)$.
	
	Now consider
	$X =  \begin{psmallmatrix} I_{n-1} & \\ & -1 \end{psmallmatrix} \in \cB_{n}$.
	Then the trace of $X$ equals $n-2$.
	
	Suppose that $n-2$ is not a multiple of $\mathrm{char}(\IF)$. Then we can write
	$$
	X=\frac{n-2}{n} I_{n} + \begin{psmallmatrix}
		\frac{2}{n}I_{n-1} & \\
		& \frac{2(-n+1)}{n}
	\end{psmallmatrix},
	$$
	where the second matrix belongs to $\mathfrak{sl}_n(\IF)$.
	This implies
	\begin{equation*}
		T(X)=\frac{n-2}{n} I_n + r
		\begin{psmallmatrix}
			\frac{2}{n}I_{n-1} & \\
			& \frac{2(-n+1)}{n}
		\end{psmallmatrix}
		=
		\begin{psmallmatrix}
			\frac{n-2+2r}{n}I_{n-1} & \\
			& \frac{n-2+2r(-n+1)}{n}
		\end{psmallmatrix}.
	\end{equation*}
	
	Let $\alpha=\frac{n-2+2r}{n}$. Since $T(X)\in\cB_{n}$ and $n\ge3$, we have
	$\alpha,\alpha-2r\in\{-1,1\}$.
	This implies $\alpha-2r=\pm\alpha$, hence $r=0$ or $r=\alpha$. Since $r\neq0$, we must have $r=\alpha$. Thus
	$$
	r=\alpha=\frac{n-2+2r}{n}
	\implies n(r-1)=2(r-1).
	$$
	Since $n\ge3$ and $n-2$ is not a multiple of $\mathrm{char}(\IF)$, we conclude that $r=1$.
	
	Now suppose that $n-2$ is a multiple of $\mathrm{char}(\IF)$. Then $X$ is trace-zero and $T(X)=rX\in\cB_n$. Since $n\ge3$, we have $r=\pm1$. Suppose $r=-1$. Consider
	$Y =  \begin{psmallmatrix} I_{n-3} & \\ & -I_{3} \end{psmallmatrix} \in \cB_{n}$.
	Since $\mathrm{char}(\IF)\neq2$ and $n-2$ is a multiple of $\mathrm{char}(\IF)$, the trace of $Y$ equals $n-6\neq0$. Hence
	$$
	Y=\frac{n-6}{n} I_n +
	\begin{psmallmatrix}
		\frac{6}{n}I_{n-3} & \\
		& \frac{2(-n+3)}{n}I_3
	\end{psmallmatrix}.
	$$
	This implies
	$$
	T(Y)=\frac{n-6}{n}I_n-
	\begin{psmallmatrix}
		\frac{6}{n}I_{n-3} & \\
		& \frac{2(-n+3)}{n}I_3
	\end{psmallmatrix}
	=
	\begin{psmallmatrix}
		\frac{n-12}{n}I_{n-3} & \\
		& \frac{3n-12}{n}I_3
	\end{psmallmatrix}.
	$$
	
	If $\mathrm{char}(\IF)=3$, then $\det(T(Y))=0$, a contradiction.
	Suppose $\mathrm{char}(\IF)\neq 3$. Since $n-2$ is a multiple of $\mathrm{char}(\IF)$, the eigenvalues of $T(Y)$ are $-5$ and $-3$. But $T(Y)\in\cB_n$, so its eigenvalues must lie in $\{-1,1\}$, which is impossible when $\mathrm{char}(\IF)\neq 2,3$. Hence $r=1$. This completes the proof.
\end{proof}

If $n$ is not a multiple of $\mathrm{char}(\IF)$, then every matrix $Z \in M_n(\IF)$ admits a decomposition of the form $Z = aI + X$, where $a \in \IF$ and $X \in \mathfrak{sl}_n(\IF)$. In this case, the proof of Theorem~\ref{thm-main} follows directly from Lemmas~\ref{lem-key-nonunital-B}, \ref{lem-main-all-form}, and \ref{lem-main-all-unital-scalar}. However, if $n$ is a multiple of $\mathrm{char}(\IF)$, then the identity matrix $I$ also belongs to $\mathfrak{sl}_n(\IF)$, and such a decomposition is no longer possible. To fully determine the linear map $T$ in this case, it is necessary to know its action on a matrix with nonzero trace, in addition to its behavior on $\mathfrak{sl}_n(\IF)$. The following result will be useful for this purpose. Since the case $n = 2$ does not occur under our assumptions that $\mathrm{char}(\IF) \neq 2$ and that $n$ is a multiple of $\mathrm{char}(\IF)$, we consider $n \ge 3$ here; see also Lemma~\ref{lem-char-two}.

\begin{lemma}\label{lem-for-n-divides-char}
	Let $n \geq 3$, $\mathrm{char}(\IF) \neq 2$, and suppose that $n$ is a multiple of $\mathrm{char}(\IF)$. Let $T: M_n(\IF) \rightarrow M_n(\IF)$ be a bijective linear map such that $T(\cS_n) \subseteq \cS_n$. Suppose that there exists an invertible matrix $P \in M_n(\IF)$ such that the restriction of $T$ to $\mathfrak{sl}_n(\IF)$ has one of the following forms:
	\begin{equation*}
		X \mapsto T(I)PXP^{-1} \quad \text{or} \quad X \mapsto T(I)PX^tP^{-1}.
	\end{equation*}
	Then $T(E_{i,i}) = T(I)PE_{i,i}P^{-1}$ for all $1 \leq i \leq n$.
\end{lemma}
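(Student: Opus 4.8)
The plan is to reduce to the case $P=I$, peel off the ``non-multiplicative'' part of $T$ as a trace correction, and then force that correction to vanish by exploiting that every product of involutions is invertible. First I would replace $T$ by the bijective linear map $S\colon X\mapsto P^{-1}T(X)P$, which again preserves $\cS_n$ (a similarity-invariant set) and satisfies $S(X)=AX$ for all $X\in\mathfrak{sl}_n(\IF)$, where $A:=S(I)=P^{-1}T(I)P\in\cS_n$; proving $S(E_{i,i})=AE_{i,i}$ is then equivalent to the assertion for $T$, and the transpose form of the hypothesis is reduced to this one by precomposing $T$ with the transpose map (which preserves each of $\cB_n,\cC_n,\cD_n$). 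Since $A\in\cS_n\subseteq\cD_n$ is invertible, set $\psi:=A^{-1}S$, so that $\psi(I)=I$ and $\psi|_{\mathfrak{sl}_n(\IF)}=\mathrm{id}$. As $\psi-\mathrm{id}$ is linear and kills the hyperplane $\mathfrak{sl}_n(\IF)=\ker(\tr)$, there is a fixed $C\in M_n(\IF)$, namely $C=\psi(E_{1,1})-E_{1,1}$, with $\psi(X)=X+(\tr X)\,C$ for all $X$; the identity $\psi(I)=I$ is consistent with this because $\mathrm{char}(\IF)\mid n$ forces $nC=0$. Everything now reduces to showing $C=0$.

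To see that $C$ is diagonal, put $D_1:=I-2E_{1,1}$ and note that for every $t\in\IF$ and every $k\ne l$ the matrix $D_1+tE_{k,l}$ has characteristic polynomial $(x+1)(x-1)^{n-1}$ (a matrix that is diagonal off a single off-diagonal entry has characteristic polynomial equal to the product of the corresponding diagonal factors), so all its eigenvalues are $\pm1$; hence it is similar to its inverse and lies in $\cB_n\subseteq\cS_n$. Applying $S$ and using $E_{k,l}\in\mathfrak{sl}_n(\IF)$ gives $S(D_1)+tAE_{k,l}=S(D_1+tE_{k,l})\in\cS_n\subseteq\cD_n$ for all $t$, so $\det(S(D_1)+tAE_{k,l})=\pm1$ for all $t$. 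Since $S(D_1)\in\cD_n$ is invertible with $\det(S(D_1))=\pm1$, Remark~\ref{rem-nil-neg-det-criteria} yields that $S(D_1)^{-1}AE_{k,l}=\psi(D_1)^{-1}E_{k,l}$ is nilpotent; this matrix has rank $\le1$, so its nilpotency is equivalent to $(\psi(D_1)^{-1})_{l,k}=0$. Letting $k\ne l$ vary shows $\psi(D_1)^{-1}$, hence $\psi(D_1)$, is diagonal, so $(n-2)C=\psi(D_1)-D_1$ is diagonal; since $\mathrm{char}(\IF)\ne2$ and $\mathrm{char}(\IF)\mid n$ give $n-2\ne0$ in $\IF$, we conclude $C=\mathrm{diag}(\gamma_1,\dots,\gamma_n)$.

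To kill $C$, fix $m$ and (using $n\ge3$) choose distinct $i,j$ with $m\notin\{i,j\}$. For $a\in\IF$, let $M_a$ act as the companion matrix $C_{x^2-ax+1}$ on $\Span\{e_i,e_j\}$ and as the identity on the complementary coordinates. Since $x^2-ax+1$ is self-reciprocal, $C_{x^2-ax+1}$ is similar to its inverse, so $M_a\in\cB_n\subseteq\cS_n$, and $\tr(M_a)=a+(n-2)=a-2$ using $n\equiv0$. Then $S(M_a)=A\psi(M_a)=A\bigl(M_a+(a-2)C\bigr)\in\cS_n\subseteq\cD_n$, so $\det\bigl(M_a+(a-2)C\bigr)=\pm1$ for every $a$. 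But $M_a+(a-2)C$ is block diagonal with respect to $\{i,j\}$ and its complement (because $M_a$ is and $C$ is diagonal), with $m$-th diagonal entry $1+(a-2)\gamma_m$; if $\gamma_m\ne0$, the choice $a=2-\gamma_m^{-1}$ makes this entry $0$ and hence the determinant $0$, contradicting $\cD_n\subseteq GL_n(\IF)$. Thus $\gamma_m=0$ for all $m$, so $C=0$, $\psi=\mathrm{id}$, $S(X)=AX$ for all $X$, and in particular $S(E_{i,i})=AE_{i,i}$; conjugating back gives $T(E_{i,i})=T(I)PE_{i,i}P^{-1}$.

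I expect the diagonality step to be the delicate one: it requires exhibiting a rich enough one-parameter family of products of involutions near $D_1$ (here $D_1+tE_{k,l}$ for all off-diagonal positions, whose membership in $\cB_n$ is what the characteristic-polynomial observation secures), together with the correct determinant/nilpotency reading-off, the point being that $n-2\ne0$ lets one divide out to recover $C$ from $\psi(D_1)-D_1$. Once $C$ is known to be diagonal the rest is short, the crux being the choice of the family $M_a\in\cB_n$ whose traces sweep out all of $\IF$ and for which a single well-chosen value of $a$ produces a singular image.
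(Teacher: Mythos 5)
Your proof is correct, and while it shares the paper's overall skeleton (normalize to $P=I$, observe that the defect of $T$ on the diagonal units is a single fixed matrix, show that matrix is diagonal, then show it is zero), the two substantive steps are carried out by genuinely different means. The paper writes $T(E_{i,i})=E_{i,i}+Y$ and kills $Y$ by hand-crafted block-triangular families in $\cB_n$ depending on a free parameter $\alpha$, arguing that $\det(T(X))$ is an affine function of $\alpha$ constrained to $\{\pm1\}$, with separate constructions for $n$ odd and $n$ even, and with an intermediate step showing the diagonal entries of $Y$ are all equal. You instead package the defect as $\psi(X)=X+(\tr X)C$ (consistent with $\psi(I)=I$ exactly because $\operatorname{char}(\IF)\mid n$), obtain diagonality of $C$ in one stroke from the rank-one perturbations $D_1+tE_{k,l}$ via the nilpotency criterion of Remark~\ref{rem-nil-neg-det-criteria} (with the clean reading-off that a rank-one matrix is nilpotent iff its trace vanishes), and then annihilate each diagonal entry $\gamma_m$ separately using the family $M_a$ built from $C_{x^2-ax+1}$, whose trace $a-2$ sweeps out $\IF$; this bypasses both the parity case analysis and the ``all diagonal entries equal'' step. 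One point worth making explicit if you write this up: your appeal to Remark~\ref{rem-nil-neg-det-criteria} is applied to a rank-one matrix, for which $\det(I+tN)=1+t\,\tr(N)$ is affine in $t$, so the conclusion follows directly from $|\IF|\ge 3$ without invoking the general polynomial-identity argument of Proposition~\ref{lem-nil-det-criteria} (whose ``zero polynomial'' step is only automatic when $|\IF|$ exceeds the degree); stating the rank-one case separately makes your argument self-contained and robust over small fields. Minor justifications you compress — that $D_1+tE_{k,l}$ has characteristic polynomial $(x+1)(x-1)^{n-1}$ and hence lies in $\cB_n$ by Proposition~\ref{prop-rev-class}, and that $C_{x^2-ax+1}\in\cB_2$ whether or not $x^2-ax+1$ is irreducible — are all correct and consistent with facts the paper already records.
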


\begin{proof}
	We may assume that $T(I)=I$ and $T(X)=X$ for all $X \in \mathfrak{sl}_n(\IF)$, since the remaining cases can be handled using a similar argument. Note that to prove the result, it suffices to show that $T(E_{i,i})=E_{i,i}$ for all $1 \leq i \leq n$.
	
	Now suppose that $T(E_{i,i}) = E_{i,i} + Y_i$ for all $1\leq i \leq n$. Since $T(E_{j,j}-E_{k,k}) = E_{j,j}-E_{k,k}$, we have $Y_j = Y_k$ for all $1 \leq j,k \leq n$. Therefore,
	\begin{equation*}
		T(E_{i,i}) = E_{i,i} + Y \quad \text{for all } 1\leq i \leq n,
	\end{equation*}
	where $Y = (y_{s,t}) \in M_n(\IF)$. We now prove that $Y=0$ in the following three steps.
	
	\textbf{Step 1.}
	First, we show that the off-diagonal entries of $Y$ are zero, i.e., $y_{i,j}=0$ for all $i \neq j$. Suppose that $y_{s,t} \neq 0$ for some $s \neq t$. Without loss of generality, we may replace $Y$ with $PYP^{-1}$ for a suitable permutation matrix $P \in M_n(\IF)$ and assume that $(s,t) = (1,2)$. Thus, it suffices to prove that $y_{1,2}=0$.
	
	Let $n = 2k+1\geq 3$ be odd. Consider $X = E_{1,1} + X_1$, where
	$$
	X_1 =  \alpha E_{2,1}  - \sum_{i> j} y_{i,j} E_{i,j} + \sum_{i=2}^n (-1)^i E_{i,i}, \qquad \alpha \in \IF.
	$$
	Then $X \in \cB_n\subseteq \cS_n$ is a lower triangular matrix with diagonal entries $(1,1,-1,  \dots, 1,-1)$. Since $T(X) = T(E_{1,1})+X_1 = X +Y$, it follows that
	$$
	T(X) =\begin{psmallmatrix} Z_{11} & Z_{12} \cr 0 & Z_{22}\cr\end{psmallmatrix},
	$$
	where $Z_{11} = \begin{psmallmatrix} 1+y_{1,1} & y_{1,2} \cr \alpha & 1 + y_{2,2}\cr\end{psmallmatrix}$
	and $Z_{22}$ is an upper triangular matrix with constant diagonal entries. Thus, $\det(T(X))$ is a linear function of $\alpha\in\IF$, and
	$$
	\det(T(X)) =   \det(Z_{11})\det(Z_{22}) 
	= (- \alpha y_{1,2} + (1+y_{1,1})(1+y_{2,2}))\det(Z_{22}),
	$$
	where $\det(Z_{22}) = \prod_{i=3}^n (y_{i,i} + (-1)^i)$ is independent of $\alpha$. Moreover, since $X \in \cB_n \subseteq \cS_n$, we have $T(X) \in \cS_n\subseteq\cD_n$, and hence $(\det(T(X)))^2=1$. Therefore $(\det(T(X)))^2=1$ holds for all values of $\alpha$. Since $|\IF|>2$, this equality holds for more than two distinct values of $\alpha$, which implies that $\det(T(X))$ must be independent of $\alpha$. Consequently, we conclude that $y_{1,2}=0$.
	
	Now consider the case when $n = 2k \ge 4$ is even. Consider
	$
	X = E_{1,1} + X_1,
	$
	where
	$$
	X_1 =   \alpha E_{2,1}  - \sum_{i> j} y_{i,j} E_{i,j} +  \sum_{j=2}^{n-1} (-1)^j E_{j,j} 
	+ \beta E_{n,n-1} - (\beta - y_{n,n-1})^{-1} E_{n-1,n},
	$$
	with $\alpha, \beta \in \IF$ such that $\beta \neq y_{n,n-1}$.
	Then
	$$
	X =  \begin{psmallmatrix} \tilde X_{11} & \tilde X_{12}\cr
		0 & \tilde X_{22}\cr\end{psmallmatrix} \in \cB_{n},
	$$
	where $\tilde X_{11} \in \cB_{n-2}$ is a lower triangular matrix with diagonal entries $(1,1,-1, \dots, -1,1)$ and
	$$
	\tilde X_{22} =
	\begin{psmallmatrix}
		-1 & -(\beta - y_{n,n-1})^{-1} \cr
		(\beta-y_{n,n-1}) & 0
	\end{psmallmatrix}
	\in \cB_{2}.
	$$
	Since $X \in \cB_{n} \subseteq \cS_n$, we have
	$$
	T(X) = \begin{psmallmatrix} \tilde Z_{11} & \tilde Z_{12} \cr 0 & \tilde Z_{22}\cr\end{psmallmatrix}
	\in \cS_{n}\subseteq \cD_n,
	$$
	where
	$\tilde Z_{11} =
	\begin{psmallmatrix}
		1+ y_{1,1} & y_{1,2} \cr
		\alpha & 1+y_{2,2}
	\end{psmallmatrix}$
	and $\tilde Z_{22}$ is an upper triangular matrix with constant diagonal entries. Thus, we again obtain that $\det(T(X))$ is a linear function of $\alpha$ such that $(\det(T(X)))^2=1$ holds for all $\alpha \in \IF$. Using the same argument as above, we obtain $y_{1,2}=0$.
	
	Therefore $y_{s,t}=0$ for all $1\leq s,t \leq n$ with $s\neq t$.
	
	\textbf{Step 2.}
	Next, we show that $y_{1,1} = \dots = y_{n,n}$. Let $y_{i,i} = y_i$ for all $1\leq i \leq n$.
	
	For $n = 2k+1 \geq 3$, consider $X = X_1  \oplus X_2$, where
	$$
	X_1 =
	\begin{psmallmatrix}
		\alpha & \alpha^2+1\cr
		-1 & -\alpha
	\end{psmallmatrix}
	$$
	and $X_2\in M_{2k-2}(\IF)$ is a diagonal matrix with diagonal entries $(1,-1, 1, \dots, -1,1)$. Since the characteristic polynomial of $X_1$ is $x^2+1$, which is self-reciprocal, we have $X_1 \in \cB_2$. Therefore Proposition~\ref{prop-rev-class} implies that $X \in \cB_n\subseteq \cS_n$, and thus $T(X) \in \cS_n \subseteq \cD_n$. Now write $X = X_0+ E_{n,n}$, where $\tr (X_0) = 0$. Thus
	$$
	T(X) = X_0 + T(E_{n,n}) = X_0 + E_{n,n} +Y=  Z_1 \oplus Z_2,
	$$
	where
	$Z_1 =
	\begin{psmallmatrix}
		\alpha+y_1 & \alpha^2+1 \cr
		-1 & -\alpha+y_2
	\end{psmallmatrix}$
	and
	$Z_2 = \diag(1 + y_3, -1+y_4, \dots, -1+y_{n-1}, 1+y_n)$. Using the same argument as in Step~1, we obtain $y_1 = y_2$.
	
	For $n = 2k \geq 4$, consider $X = X_1\oplus \diag(1,-1, \dots, 1,-1)$, where
	$$
	X_1 =
	\begin{psmallmatrix}
		\alpha+1 & \alpha(\alpha+1) + 1 \cr
		-1 & -\alpha
	\end{psmallmatrix}.
	$$
	Since the characteristic polynomial of $X_1$ is $x^2-x+1$, which is self-reciprocal, we have $X_1 \in \cB_2$. Thus $X \in \cB_n \subseteq \cS_n$ and hence $T(X) \in \cS_n \subseteq \cD_n$. Now write $X = E_{1,1} + X_0$, where $\tr (X_0) =0$. Therefore,
	$$
	T(X) = T(E_{1,1}) + X_0 = Z_1 \oplus Z_2,
	$$
	where
	$Z_1 =
	\begin{psmallmatrix}
		\alpha+1+y_1 & \alpha(\alpha+1) +1 \cr
		-1 & -\alpha+y_2
	\end{psmallmatrix}$
	and $Z_2$ is a diagonal matrix with constant entries. Using the same argument as in Step~1, we again obtain $y_1 = y_2$.
	
	Therefore $y_1 = y_2$. Repeating the argument shows that $y_j = y_1$ for all $1 \leq j \leq n$.
	
	\textbf{Step 3.}
	Finally, we prove that $Y=0$. From the previous steps, we may assume that $Y = y I$ for some $y \in \IF$.
	
	For $n = 2k+1 \geq 3$, consider the trace-zero matrices
	$$
	X_1=\begin{psmallmatrix}0&0&0\\0&0&1\\0&-1&0\end{psmallmatrix}\oplus \sum_{i=4}^n(-1)^i E_{i,i}
	\quad
	\text{and}
	\quad
	X_2=\begin{psmallmatrix}1&1&0\\-1&0&0\\0&0&-1\end{psmallmatrix}\oplus \sum_{i=4}^n(-1)^i E_{i,i}.
	$$
	Then Proposition~\ref{prop-rev-class} implies that $E_{1,1} +X_1$ and $ E_{1,1} +X_2$ belong to $\cB_{n} \subseteq \cS_n$. Therefore
	$$
	T(E_{1,1} +X_1)=\begin{psmallmatrix}1+y&0&0\\0&y&1\\0&-1&y\end{psmallmatrix}\oplus Z
	\quad
	\text{and}
	\quad
	T(E_{1,1} +X_2)=\begin{psmallmatrix}2+y&1&0\\-1&y&0\\0&0&-1+y\end{psmallmatrix}\oplus Z
	$$
	also belong to $\cS_{n} \subseteq \cD_n$, where $Z=\sum_{i=4}^n(-1)^i E_{i,i}$. Thus
	$$
	\det(T(E_{1,1}+X_1))=(y+1)(y^2+1)\beta
	\quad\text{and}\quad
	\det(T(E_{1,1}+X_2))=(y+1)^2(-1+y)\beta
	$$
	both lie in $\{\pm 1\}$,	where $\beta =\det(Z)$. Since $y \neq \pm1$ and $\beta \neq 0$, it follows that
	$$
	(y+1)(y^2+1)=\pm (y+1)^2(-1+y),
	$$
	which implies $y^2+1=\pm(-1+y^2)$. Since $\mathrm{char}(\IF) \neq 2$, we obtain $y=0$.
	
	For $n = 2k \ge 4$, consider the trace-zero matrices
	$
	X_1=
	\begin{psmallmatrix}
		0&1\\
		-1&0
	\end{psmallmatrix}
	\oplus
	\begin{psmallmatrix}
		1&0\\
		0&-1
	\end{psmallmatrix}
	\oplus
	\sum_{i=5}^n (-1)^i E_{i,i}
	$
	and
	$
	X_2=
	\begin{psmallmatrix}
		0&1\\
		-1&0
	\end{psmallmatrix}
	\oplus
	\begin{psmallmatrix}
		0&1\\
		-1&0
	\end{psmallmatrix}
	\oplus
	\sum_{i=5}^n (-1)^i E_{i,i}
	$
	in $M_n(\IF)$. Then Proposition~\ref{prop-rev-class} implies that $E_{1,1}+X_1$ and $E_{1,1}+X_2$ belong to $\cB_n \subseteq \cS_n$. Therefore
	$$
	T(E_{1,1}+X_1)=
	\begin{psmallmatrix}
		1+y&1\\
		-1&y
	\end{psmallmatrix}
	\oplus
	\begin{psmallmatrix}
		1+y&0\\
		0&-1+y
	\end{psmallmatrix}
	\oplus Z
	\quad
	\text{and}
	\quad
	T(E_{1,1}+X_2)=
	\begin{psmallmatrix}
		1+y&1\\
		-1&y
	\end{psmallmatrix}
	\oplus
	\begin{psmallmatrix}
		y&1\\
		-1&y
	\end{psmallmatrix}
	\oplus Z
	$$
	also belong to $\cS_n \subseteq \cD_n$, where
	$
	Z=\sum_{i=5}^n\bigl((-1)^i+y\bigr)E_{i,i}.
	$
	Thus
	$$
	\det(T(E_{1,1}+X_1))=(y^2-1)\beta
	\quad \text{and} \quad
	\det(T(E_{1,1}+X_2))=(y^2+1)\beta,
	$$
	both lie in $\{\pm 1\}$, where $\beta=(y^2+y+1)\det(Z)$. Using the same argument as in the previous case, we conclude that $y=0$.
	
	Hence $Y=0$, and therefore $T(E_{i,i})=E_{i,i}$ for all $1\le i\le n$. This completes the proof.
\end{proof}

Before proving our main theorem, we observe that unlike the case $n \ge 3$, for $n=2$ there exists a unital map $T$ that preserves $\cS_2$ and satisfies the conditions of Lemma~\ref{lem-main-all-unital-scalar} with $r=-1$.
For $A \in M_n(\IF)$, let $\mathrm{tr}(A)$ denote its trace.

\begin{remark}\label{remark-key-3}
	Let $\mathrm{char}(\IF)\neq 2$, and let
	$T: M_2(\IF) \rightarrow M_2(\IF)$ be a bijective linear map such that $T(I)=I$ and the restriction of $T$ to $\mathfrak{sl}_2(\IF)$ has one of the following forms:
	$$T(X)=-X \quad \text{or} \quad T(X)=-X^t.$$
	We claim that $T(\cS_2)\subseteq\cS_2$.
	
	We consider the case where the restriction of $T$ to $\mathfrak{sl}_2(\IF)$ is given by $T(X)=-X$, since a similar argument applies to the other case. Since $\mathrm{char}(\IF)\neq2$, we have
	$$
	T(X)=\frac{\mathrm{tr}(X)}{2}I-\bigg(X-\frac{\mathrm{tr}(X)}{2}I\bigg)
	=-X+\mathrm{tr}(X)I
	\quad \text{for all } X\in M_2(\IF).
	$$
	Observe that $\det(T(X))=\det(X)$ for all $X\in M_2(\IF)$. Therefore, we conclude that $T(\cD_2)\subseteq\cD_2$. Using \cite[Corollary~2]{Ba}, we have $\cC_2=\cD_2$, and thus $T(\cC_2)\subseteq\cC_2$.
	
	Next, we prove that $T(\cB_{2}) \subseteq \cB_{2}$. Let $A\in\cB_2$. Then $A$ is either similar to a diagonal matrix with eigenvalues in $\{-1,1\}$, or similar to the companion matrices $C_f$ or $C_g$, where $f(x)=x^2-1$ and $g(x)=x^2+ax+1$ are self-reciprocal irreducible monic polynomials over $\IF$. In the first and second cases, $A$ is similar to an involution in $\cB_{2}\cap\mathfrak{sl}_2(\IF)$; therefore, $T(A)=-A\in\cB_2$.
	
	In the third case, up to similarity, $A$ is given by
	$
	A=\begin{psmallmatrix}
		0&-1\\
		1&-a
	\end{psmallmatrix}.
	$
	Then we obtain
	$$
	T(A)=-A+\mathrm{tr}(A)I
	=\begin{psmallmatrix}
		-a&1\\
		-1&0
	\end{psmallmatrix}\in\cB_2.
	$$
	Thus $T(\cB_{2}) \subseteq \cB_{2}$.
	
	Hence $T(\cS_2)\subseteq\cS_2$. This shows that $r=-1$ is possible in Lemma~\ref{lem-main-all-unital-scalar} when $n=2$. Furthermore, note that in this case
	$$
	T(X)=QX^{t}Q^{-1} \quad \text{for all } X\in M_2(\IF),
	$$
	where $Q=\begin{psmallmatrix}
		0&1\\
		-1&0
	\end{psmallmatrix}$.
	Thus, in this case as well, the map $T$ is of the same form as in Theorem~\ref{thm-main}. \qed
\end{remark}

We now apply the above results to prove our main theorem.

\textit{Proof of Theorem~\ref{thm-main}}.
The sufficiency part of the result is straightforward; see also Remark~\ref{remark-key-3} for the case $n=2$. Therefore, we focus on proving the necessity.

Let $\cS_n=\cB_n$, $\cC_n$, or $\cD_n$, and let $T: M_n(\IF)\rightarrow M_n(\IF)$ be a bijective linear map satisfying $T(\cS_n)\subseteq\cS_n$, where $n\ge2$. By Lemma~\ref{lem-main-all-form}, there exist a scalar $r\in\IF$ and an invertible matrix $R\in M_n(\IF)$ such that the restriction of $T$ to $\mathfrak{sl}_n(\IF)$ has one of the following forms:
\begin{equation}\label{eq-1-main-thm-proof}
	X\mapsto rT(I)RXR^{-1} \quad \text{or} \quad X\mapsto rT(I)RX^tR^{-1}.
\end{equation}

To prove the result, we consider the following cases depending on $\cS_n$.

\textbf{Case (a).} Let $\cS_n=\cB_n$. In view of Lemma~\ref{lem-key-nonunital-B}, we have $T(I)=\pm I$. Using Lemma~\ref{lem-main-all-unital-scalar} and Remark~\ref{remark-key-3}, we conclude that there exist $\alpha\in\{-1,1\}$ and an invertible matrix $P$ such that $T(I)=\alpha I$, and the restriction of $T$ to $\mathfrak{sl}_n(\IF)$ has one of the following forms:
\begin{equation*}
	X\mapsto \alpha PXP^{-1} \quad \text{or} \quad X\mapsto \alpha PX^tP^{-1}.
\end{equation*}

Suppose that $n$ is not a multiple of $\mathrm{char}(\IF)$. Then every $Z\in M_n(\IF)$ can be written as $Z=aI+X$ for some $a\in\IF$ and $X\in\mathfrak{sl}_n(\IF)$, and the result follows in this case. Now assume that $n$ is a multiple of $\mathrm{char}(\IF)$. Using Lemma~\ref{lem-for-n-divides-char}, we obtain
\begin{equation*}
	E_{i,i}\mapsto \alpha P E_{i,i} P^{-1}
	\quad \text{or} \quad
	E_{i,i}\mapsto \alpha P E_{i,i}^{t} P^{-1},
\end{equation*}
for all $1\le i\le n$. The proof for this case now follows from the fact that the set $\{E_{i,j}\mid 1\le i,j\le n\}$ forms a basis of $M_n(\IF)$.

\textbf{Case (b).} Let $\cS_n=\cC_n$ and $T(I)=I$. The result in this case follows from a similar argument as in Case~(a).

\textbf{Case (c).} Let $\cS_n=\cD_n$. Applying Lemma~\ref{lem-main-all-unital-scalar} and Remark~\ref{remark-key-3}, we obtain that there exists an invertible matrix $Q\in M_n(\IF)$ such that the restriction of $T$ to $\mathfrak{sl}_n(\IF)$ has one of the following forms:
$$
X\mapsto T(I)Q^{-1}XQ
\quad \text{or} \quad
X\mapsto T(I)Q^{-1}X^tQ.
$$
Let $P=T(I)Q^{-1}$. Since $T(I)\in\cD_n$, we have $T(I)=PQ$ with $\det(PQ)=\pm1$, and the restriction of $T$ to $\mathfrak{sl}_n(\IF)$ has one of the following forms:
$$
X\mapsto PXQ
\quad \text{or} \quad
X\mapsto PX^tQ.
$$
We now consider two cases depending on whether $n$ is a multiple of $\char(\IF)$ and use a similar argument as in Case~(a) to complete the proof.
\qed

%Results on (non-unital) linear preservers of $\mathcal C_n$
%Results on (non-unital) linear maps preserving $\mathcal C_n$
%More results related to linear maps preserving $\cC_n$.

\section{Related results}\label{sec-related-results}

This section is divided into two subsections. In the first, we present additional results on the linear preservers of $\cC_n$, and in the second, we provide results for the case when the characteristic of the field is $2$.

\subsection{Non-unital preservers of $\cC_n$}
\label{sec-related-results-part-1}

In this subsection, we discuss the bijective linear maps $T: M_n(\IF) \rightarrow M_n(\IF)$ that map $\cC_n$ into itself. For this purpose, we introduce the following notation.

Define the set
\begin{equation}\label{eq-set-lambda}
	\Lambda_n(\IF) := \{P \in M_n(\IF) \mid P \cC_n \subseteq \cC_n \}.
\end{equation}
Observe that the identity matrix $I$ belongs to $\Lambda_n(\IF)$. This set depends on both $n$ and the underlying field $\IF$. Some of its basic properties are described in the next remark.

\begin{remark}\label{rem-set-Lambda}
	Since $I \in \Lambda_n(\IF)$, we have $\Lambda_n(\IF) \neq \emptyset$ and $\Lambda_n(\IF) \subseteq \cC_n$. In view of \cite{GHR}, $\Lambda_n(\IF)= \cD_n$ when $\cC_n=\cD_n$. Moreover, if $A \in \Lambda_n(\IF)$, then $-A, A^{-1} \in \Lambda_n(\IF)$. This can be shown as follows.
	
	Let $A \in \Lambda_n(\IF)$. Suppose that $X=X_1X_2X_3$ is an arbitrary element of $\cC_n$, where each $X_i^2=I$. Then $-X=(-X_1)X_2X_3$ and $X^{-1}=X_3^{-1}X_2^{-1}X_1^{-1}=X_3X_2X_1$ belong to $\cC_n$. Moreover,
	\[
	A^{-1}X^{-1}A=(A^{-1}X_3A)(A^{-1}X_2A)(A^{-1}X_1A)\in \cC_n.
	\]
	Observe that $-AX=A(-X)$ for all $X\in \cC_n$. Since $A \in \Lambda_n(\IF)$ and $-X\in \cC_n$, we have $-A \in \Lambda_n(\IF)$. Now note that
	\[
	(A^{-1}X)^{-1}=X^{-1}A=A(A^{-1}X^{-1}A).
	\]
	Since $A \in \Lambda_n(\IF)$ and $A^{-1}X^{-1}A \in \cC_n$, it follows that $(A^{-1}X)^{-1} \in \cC_n$, and hence $A^{-1}X \in \cC_n$ for all $X\in \cC_n$. Therefore $A^{-1} \in \Lambda_n(\IF)$. Thus we conclude that $-A, A^{-1} \in \Lambda_n(\IF)$.
	\qed
\end{remark}

The following result characterizes the bijective linear preservers of $\cC_n$ in terms of $\Lambda_n(\IF)$.

\begin{theorem}\label{thm-non-unital-C_n}
	Let $\IF$ be a field with characteristic not equal to $2$. Let $T: M_n(\IF) \rightarrow M_n(\IF)$ be a bijective linear map. Then $T(\cC_n) \subseteq \cC_n$ if and only if there exist invertible matrices $P, Q\in M_n(\IF)$ with $PQ\in \Lambda_n(\IF)$ such that $T$ has the form
	\begin{equation*}
		X \mapsto PXQ \quad \text{or} \quad X \mapsto PX^tQ.
	\end{equation*}
\end{theorem}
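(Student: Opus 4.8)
The plan is to reduce the problem to the unital case, Theorem~\ref{thm-main}(ii), by factoring out $T(I)$. The sufficiency direction is immediate from the definition of $\Lambda_n(\IF)$: if $T(X) = PXQ$ and $PQ \in \Lambda_n(\IF)$, then for $X = X_1 X_2 X_3 \in \cC_n$ (each $X_i$ an involution) we have $PX_1X_2X_3Q = (PQ)(Q^{-1}X_1Q)(Q^{-1}X_2Q)(Q^{-1}X_3Q) \in (PQ)\cC_n \subseteq \cC_n$, and similarly for the transpose form using that $\cC_n$ is closed under transpose (a product of involutions transposes to a product of involutions). So the work is in the necessity direction.

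For necessity, the first step is to apply Lemma~\ref{lem-main-all-form} with $\cS_n = \cC_n$: there exist a scalar $r$ with $r^{2n}=1$ and an invertible $R$ so that, on $\mathfrak{sl}_n(\IF)$, $T$ has the form $X \mapsto rT(I)RXR^{-1}$ or $X \mapsto rT(I)RX^tR^{-1}$. Next, invoke Lemma~\ref{lem-main-all-unital-scalar} (with $\cS_n = \cC_n$) to pin down $r$: for $n \ge 3$ we get $r = 1$, and for $n = 2$ we get $r = \pm 1$, but in the $n=2$ case the map $X \mapsto -X$ on $\mathfrak{sl}_2$ is itself realized as a conjugation-transpose $X \mapsto QX^tQ^{-1}$ (as in Remark~\ref{remark-key-3}), so after possibly composing with that we may assume $r = 1$ in all cases. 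Thus, writing $A = T(I)$, the restriction of $T$ to $\mathfrak{sl}_n(\IF)$ is $X \mapsto ARXR^{-1}$ or $X \mapsto ARX^tR^{-1}$. Set $P = AR$ and $Q = R^{-1}$; then on $\mathfrak{sl}_n(\IF)$ we have $T(X) = PXQ$ (resp.\ $PX^tQ$), and $PQ = A = T(I) \in \cC_n$.

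The remaining step is to extend this formula from $\mathfrak{sl}_n(\IF)$ to all of $M_n(\IF)$ and to upgrade $PQ \in \cC_n$ to $PQ \in \Lambda_n(\IF)$. For the extension: if $n$ is not a multiple of $\char(\IF)$, every matrix decomposes as $aI + X$ with $X$ trace-zero, and since $T(I) = A = PQ = PIQ$, linearity gives $T(Z) = PZQ$ (resp.\ $PZ^tQ$) everywhere. If $n$ is a multiple of $\char(\IF)$ (which forces $n \ge 3$ under $\char(\IF)\neq 2$, and also forces $r=1$), we apply Lemma~\ref{lem-for-n-divides-char} to conclude $T(E_{i,i}) = PE_{i,i}Q$ (resp.\ transpose) for all $i$, and since the $E_{i,j}$ form a basis of $M_n(\IF)$, the formula holds everywhere. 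Finally, once we know $T(X) = PXQ$ (resp.\ $PX^tQ$) for all $X$, the hypothesis $T(\cC_n) \subseteq \cC_n$ says precisely that $PX_1X_2X_3Q \in \cC_n$ for every product of three involutions; writing this as $(PQ)(Q^{-1}X_1X_2X_3Q) \in \cC_n$ and noting $Q^{-1}X_1X_2X_3Q$ ranges over all of $\cC_n$ as the $X_i$ do, we conclude $(PQ)\cC_n \subseteq \cC_n$, i.e.\ $PQ \in \Lambda_n(\IF)$.

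I expect the main obstacle to be the bookkeeping in the characteristic-divides-$n$ case and the $n=2$ normalization of $r$: one has to verify carefully that absorbing the extra transpose (to kill $r = -1$ when $n=2$) does not change the shape of the conclusion, and that Lemma~\ref{lem-for-n-divides-char}'s hypotheses (namely that $T$ restricted to $\mathfrak{sl}_n$ is exactly $X \mapsto T(I)PXP^{-1}$ with scalar $1$) are genuinely met after the reductions. Everything else is a routine repackaging of Theorem~\ref{thm-main}(ii)'s proof with $T(I)$ carried along instead of assumed to be $I$.
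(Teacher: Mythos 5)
Your proposal is correct and follows essentially the same route as the paper: reduce to the form on $\mathfrak{sl}_n(\IF)$ via Lemmas~\ref{lem-main-all-form} and~\ref{lem-main-all-unital-scalar}, extend to all of $M_n(\IF)$ (splitting on whether $\mathrm{char}(\IF)$ divides $n$, using Lemma~\ref{lem-for-n-divides-char}), and then read off $PQ=T(I)\in\Lambda_n(\IF)$ from $Q^{-1}\cC_nQ=\cC_n$. The only cosmetic difference is that the paper dispatches $n=2$ at the outset via $\cC_2=\cD_2$, Remark~\ref{rem-set-Lambda}, and Theorem~\ref{thm-main}(iii), whereas you absorb the $r=-1$ possibility by composing with the map of Remark~\ref{remark-key-3}; both are valid.
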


\begin{proof}
	Since $PXQ=PQ(Q^{-1}XQ)$, $PX^tQ=PQ(Q^{-1}X^tQ)$, and $Q\cC_nQ^{-1}\subseteq\cC_n$, the sufficiency of the result is straightforward. We now focus on proving the necessity. From \cite[Corollary~2]{Ba}, we have $\cC_2=\cD_2$, so the case $n=2$ follows directly from Remark~\ref{rem-set-Lambda} and Theorem~\ref{thm-main}.
	
	Now assume $n\ge3$. Let $T: M_n(\IF) \rightarrow M_n(\IF)$ be a bijective linear map satisfying $T(\cC_n) \subseteq \cC_n$. Let $A:=T(I) \in \cC_n$. Using Lemma~\ref{lem-main-all-form} and Lemma~\ref{lem-main-all-unital-scalar}, we obtain that there exists an invertible matrix $Q \in M_n(\IF)$ such that the restriction of $T$ to $\mathfrak{sl}_n(\IF)$ has one of the following forms:
	\[
	X\mapsto AQ^{-1}XQ \quad \text{or} \quad X\mapsto AQ^{-1}X^{t}Q.
	\]
	
	We now consider two cases depending on whether $n$ is a multiple of $\char(\IF)$. Using Lemma~\ref{lem-for-n-divides-char} and a similar argument as in Theorem~\ref{thm-main}, we conclude that $T$ has one of the following forms:
	\begin{equation*}
		X \mapsto AQ^{-1}XQ \quad \text{or} \quad X \mapsto AQ^{-1}X^tQ.
	\end{equation*}
	
	Since $T(\cC_n) \subseteq \cC_n$ and $Q^{-1}\cC_nQ\subseteq \cC_n$, it follows that $AX\in \cC_n$ for all $X\in \cC_n$, that is, $A\in\Lambda_n(\IF)$. The proof now follows by setting $P=AQ^{-1}$.
\end{proof}

Note that the above theorem depends on the subset $\Lambda_n(\IF)$ of $\cC_n$. Therefore, a complete characterization of the preservers of $\cC_n$ requires a thorough understanding of its structure. If $\cC_n=\cD_n$, then $\Lambda_n(\IF)=\cD_n$, and hence Theorem~\ref{thm-non-unital-C_n} is equivalent to Theorem~\ref{thm-main}(iii). In this case, the bijective linear preservers of $\cC_n$ are precisely those described in Theorem~\ref{thm-main}(iii). 

Now suppose that $\cC_n \subsetneq \cD_n$. In this case we have $\Lambda_n(\IF) \subseteq \cC_n \subsetneq \cD_n$. In the following remark we examine $\Lambda_3(\IF)$ in certain cases where $\cC_3 \subsetneq \cD_3$ and show that $\Lambda_3(\IF)=\{\pm I\}$.

\begin{remark}\label{rem-struct-lamba-three}
	Let $\mathrm{char}(\IF) \notin \{2,3\}$, and suppose that the polynomial $t^2+t+1$ is reducible in $\IF[t]$. Using Remark~\ref{rem_C_n}, we have $\cC_3 \subsetneq \cD_3$. Moreover, according to \cite[p.~57]{Ba}, there exists a scalar matrix $\alpha I \in \cD_3$ that is not in $\cC_3$, where $\alpha^4+\alpha^2+1=0$. We now show that if $A \in \Lambda_3(\IF)$, then $A=\pm I$.
	
	Suppose that $A \in \Lambda_3(\IF)$ and $A \neq \pm I$. Since $\Lambda_3(\IF) \subseteq \cC_3$, \cite[Corollary~3]{Ba} implies that $A$ is a nonscalar invertible matrix in $\cD_3$. Let $B:=\alpha A^{-1}=A^{-1}(\alpha I)\in M_3(\IF)$. Since $A$ and $\alpha I$ belong to $\cD_3$, it follows that $B$ is a nonscalar matrix in $\cD_3$. Then, by \cite[Corollary~3(b)]{Ba}, we have $B \in \cC_3$. Since $A \in \Lambda_3(\IF)$, we obtain $AB=\alpha I \in \cC_3$, which is a contradiction. Hence $\Lambda_3(\IF)=\{\pm I\}$.
	\qed
\end{remark}

Recall that if $\mathrm{char}(\IF)=3$ or $t^2+t+1$ is irreducible in $\IF[t]$, then $\cC_3=\cD_3$ (see Remark~\ref{rem_C_n}), and thus the bijective linear preservers of $\cC_3$ follow directly from Theorem~\ref{thm-main}(iii). The following result characterizes the bijective linear preservers of $\cC_3$ in the remaining cases, where $\cC_3 \subsetneq \cD_3$ and $\mathrm{char}(\IF) \neq 2$; see also Theorem~\ref{thm-main-order-3-char-2}.

\begin{theorem}\label{thm-prod-3-not2}
	Let $\char(\IF)\neq 2$, and let $T: M_3(\IF) \rightarrow M_3(\IF)$ be a bijective linear map. Suppose that $\cC_3$ is a proper subset of $\cD_3$. Then $T(\cC_3) \subseteq \cC_3$ if and only if there exist a scalar $\alpha \in \{-1,1\}$ and an invertible matrix $P\in M_3(\IF)$ such that $T$ has the form
	$$X \mapsto \alpha PXP^{-1} \quad \text{or} \quad X \mapsto \alpha PX^tP^{-1}.$$
\end{theorem}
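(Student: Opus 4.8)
The plan is to combine the structural results already proved with the special features of the case $n=3$, $\cC_3 \subsetneq \cD_3$. By Remark~\ref{rem_C_n}, the hypothesis $\cC_3 \subsetneq \cD_3$ forces $\mathrm{char}(\IF) \notin \{2,3\}$, that $t^2+t+1$ is reducible in $\IF[t]$, and $|\IF| > 5$; in particular the analysis of Remark~\ref{rem-struct-lamba-three} applies. First I would invoke Theorem~\ref{thm-non-unital-C_n}: a bijective linear preserver $T$ of $\cC_3$ has the form $X \mapsto PXQ$ or $X \mapsto PX^tQ$ for invertible $P,Q$ with $PQ \in \Lambda_3(\IF)$. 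So it remains only to pin down $PQ$. The key input is Remark~\ref{rem-struct-lamba-three}, which gives $\Lambda_3(\IF) = \{\pm I\}$ under exactly the present hypotheses. Hence $PQ = \alpha I$ with $\alpha \in \{-1,1\}$, so $Q = \alpha P^{-1}$ and $T$ has the form $X \mapsto \alpha P X P^{-1}$ or $X \mapsto \alpha P X^t P^{-1}$, which is the asserted conclusion. Conversely, any map of this form preserves $\cC_3$ since $\cC_3$ is closed under similarity and under $X \mapsto -X$ (write $X = X_1X_2X_3$ with $X_i^2 = I$, then $-X = (-X_1)X_2X_3$), and it is transpose-invariant because $X^t = (X_1X_2X_3)^t = X_3^t X_2^t X_1^t$ is again a product of three involutions.

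The main subtlety — and the step I expect to require the most care — is ensuring that Remark~\ref{rem-struct-lamba-three} genuinely covers \emph{all} cases in which $\cC_3 \subsetneq \cD_3$ and $\mathrm{char}(\IF)\neq 2$, rather than just the cases it explicitly names. Remark~\ref{rem-struct-lamba-three} is stated under the hypotheses $\mathrm{char}(\IF)\notin\{2,3\}$ and $t^2+t+1$ reducible. So I would first argue, using Remark~\ref{rem_C_n}, that $\cC_3 \subsetneq \cD_3$ together with $\mathrm{char}(\IF)\neq 2$ forces precisely these hypotheses: if $\mathrm{char}(\IF)=3$ then $\cC_3=\cD_3$ by Remark~\ref{rem_C_n}(iii), a contradiction; and if $t^2+t+1$ were irreducible then again $\cC_3=\cD_3$ by Remark~\ref{rem_C_n}(iii). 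Thus $\mathrm{char}(\IF)\notin\{2,3\}$ and $t^2+t+1$ is reducible, which is exactly the situation of Remark~\ref{rem-struct-lamba-three}, giving $\Lambda_3(\IF)=\{\pm I\}$.

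I would also double-check the auxiliary claim used inside Remark~\ref{rem-struct-lamba-three}, namely the existence of a scalar $\alpha\in\IF$ with $\alpha^4+\alpha^2+1=0$ that is not $\pm 1$: reducibility of $t^2+t+1$ gives a primitive cube root of unity $\omega\in\IF$, and then a square root $\alpha$ of $\omega$ (or of $\omega^2$) satisfies $\alpha^4+\alpha^2+1 = \omega^2+\omega+1 = 0$; since $\mathrm{char}(\IF)\neq 2$ we have $\alpha \neq \pm 1$, for otherwise $\omega = \alpha^2 = 1$. The fact that $\alpha I \in \cD_3 \setminus \cC_3$ is cited from \cite[p.~57]{Ba} and from \cite[Corollary~3]{Ba}. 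With $\Lambda_3(\IF) = \{\pm I\}$ in hand and Theorem~\ref{thm-non-unital-C_n} providing the shape of $T$, the proof reduces to the short deduction $PQ = \pm I \implies T(X) = \pm PXP^{-1}$ or $\pm PX^tP^{-1}$, completing the argument.
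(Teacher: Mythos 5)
Your proposal is correct and follows essentially the same route as the paper: deduce from Remark~\ref{rem_C_n} that $\mathrm{char}(\IF)\notin\{2,3\}$ and $t^2+t+1$ is reducible, apply Remark~\ref{rem-struct-lamba-three} to get $\Lambda_3(\IF)=\{\pm I\}$, and conclude via Theorem~\ref{thm-non-unital-C_n}. The extra care you take in verifying the hypotheses of Remark~\ref{rem-struct-lamba-three} and the existence of $\alpha$ with $\alpha^4+\alpha^2+1=0$ is sound but only fills in details the paper leaves implicit.
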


\begin{proof}
	Using Remark~\ref{rem_C_n}, we obtain that $\mathrm{char}(\IF) \neq 3$ and that the polynomial $t^2+t+1$ is reducible in $\IF[t]$. Then Remark~\ref{rem-struct-lamba-three} implies that $\Lambda_3(\IF)=\{\pm I\}$. The result now follows from Theorem~\ref{thm-non-unital-C_n}.
\end{proof}

%\subsection{Results for fields with characteristic $2$
	
\subsection{The characteristic $2$ case}\label{sec-related-results-part-2}

In the previous discussion, we assumed that $\mathrm{char}(\IF)\neq 2$. The characterization of linear preservers of products of involutions becomes more challenging when $\mathrm{char}(\IF)=2$. We now discuss the cases $n=2$ and $n=3$ under the assumption that $\mathrm{char}(\IF)=2$. The following remark shows that when $n=2=\mathrm{char}(\IF)$, we have $\cB_2=\cC_2=\cD_2$.

\begin{remark}\label{rem-char-two}
	Let $n=2=\mathrm{char}(\IF)$. Since $\mathrm{char}(\IF)=2$, we have
	$$ \begin{psmallmatrix}
		1 & x\\
		0 & 1
	\end{psmallmatrix}
	\begin{psmallmatrix}
		0 & 1\\
		1 & x
	\end{psmallmatrix}
	=
	\begin{psmallmatrix}
		x & 1+x^2\\
		1 & x
	\end{psmallmatrix}
	=
	\begin{psmallmatrix}
		x & 1\\
		1 & 0
	\end{psmallmatrix}
	\begin{psmallmatrix}
		1 & x\\
		0 & 1
	\end{psmallmatrix}.
	$$
	Thus,
$
	\begin{psmallmatrix}
		0 & 1\\
		1 & x
	\end{psmallmatrix}
	\in \cB_2(\IF)
	\quad \text{for all } x\in\IF.
$
	Consequently, every matrix in $M_2(\IF)$ with determinant $\pm1$ belongs to $\cB_2$, implying that $\cB_2=\cC_2=\cD_2$.
	\qed
\end{remark}

To characterize the bijective linear preservers of $\cS_2$ when $\mathrm{char}(\IF)=2$, we require the following analogue of Lemma~\ref{lem-for-n-divides-char}.

\begin{lemma}\label{lem-char-two}
	Let $\mathrm{char}(\IF)=2$, and let $T: M_2(\IF)\rightarrow M_2(\IF)$ be a bijective linear map such that $T(\cS_2)\subseteq \cS_2$. Suppose that there exists an invertible matrix $P\in M_2(\IF)$ such that the restriction of $T$ to $\mathfrak{sl}_2(\IF)$ has one of the following forms:
	\begin{equation*}
		X \mapsto T(I)PXP^{-1} \quad \text{or} \quad X \mapsto T(I)PX^tP^{-1}.
	\end{equation*}
	Then there exists $Y\in\{O,I\}\subseteq M_2(\IF)$ such that
	$$
	T(E_{i,i})=T(I)P(E_{i,i}+Y)P^{-1} \quad \text{for all } 1\le i\le2.
	$$
\end{lemma}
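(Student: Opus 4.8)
The plan is to reduce to the normalized situation $T(I)=I$ and $T(X)=X$ for all $X\in\mathfrak{sl}_2(\IF)$ (the transpose case and the general $P$ are handled by the same argument after conjugating and transposing). In characteristic $2$ the space $\mathfrak{sl}_2(\IF)$ consists of the trace-zero matrices, and since $\tr(I)=2=0$ we have $I\in\mathfrak{sl}_2(\IF)$; thus $T(I)=I$ is consistent with $T|_{\mathfrak{sl}_2}=\mathrm{id}$. The matrices $E_{1,1}$ and $E_{2,2}$ both have trace $1$, and $E_{1,1}+E_{2,2}=I$, so once we know $T(E_{1,1})=E_{1,1}+Y$ for some $Y$, linearity forces $T(E_{2,2})=T(I)-T(E_{1,1})=I-(E_{1,1}+Y)=E_{2,2}-Y=E_{2,2}+Y$ (signs are irrelevant in characteristic $2$). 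So it suffices to show $T(E_{1,1})=E_{1,1}+Y$ with $Y\in\{O,I\}$.

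First I would write $T(E_{1,1})=E_{1,1}+Y$ for an as-yet-unknown $Y=(y_{s,t})\in M_2(\IF)$; the goal is to pin down $Y\in\{O,I\}$, i.e.\ $Y=yI$ for some $y\in\IF$ with the extra constraint that only $y=0,1$ can occur — but note in characteristic $2$ that if $y\notin\{0,1\}$ we still want to exclude it. Actually the cleaner target is: show the off-diagonal entries of $Y$ vanish and the two diagonal entries are equal, so $Y=yI$, and then show $y\in\{0,1\}$. Because $\cB_2=\cC_2=\cD_2$ in characteristic $2$ (Remark~\ref{rem-char-two}), membership in $\cS_2$ is equivalent to having determinant in $\{-1,1\}=\{1\}$, so the only tool is: $\det(T(Z))=1$ whenever $Z\in M_2(\IF)$ has $\det(Z)=1$. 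The strategy is to feed $T$ a one-parameter family of matrices $Z(\alpha)=E_{1,1}+X(\alpha)$ with $X(\alpha)\in\mathfrak{sl}_2(\IF)$ and $\det(Z(\alpha))=1$ for all $\alpha$, compute $\det(T(Z(\alpha)))=\det(E_{1,1}+Y+X(\alpha))$, which is a low-degree polynomial in $\alpha$, and use that it must be identically $1$ for infinitely many $\alpha$ (we may assume $\IF$ is infinite, or at least large enough; the finite small-field cases are covered by $\cC_n=\cD_n$ results, though in characteristic $2$ one should double-check $|\IF|=2$ separately — there $M_2(\mathbb F_2)$ is small and can be checked by hand). Choosing $X(\alpha)=\alpha E_{2,1}+\begin{psmallmatrix}0&0\\0&0\end{psmallmatrix}$-type perturbations, or more precisely $X(\alpha)=\begin{psmallmatrix}0&0\\\alpha&0\end{psmallmatrix}$ so that $Z(\alpha)=\begin{psmallmatrix}1&0\\\alpha&0\end{psmallmatrix}$ has determinant $0$ — that's wrong; instead take $X(\alpha)=\begin{psmallmatrix}\alpha&\beta\\\gamma&\alpha\end{psmallmatrix}\in\mathfrak{sl}_2$ and arrange $\det(E_{1,1}+X(\alpha))=1$. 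Concretely $E_{1,1}+\begin{psmallmatrix}\alpha&\beta\\\gamma&\alpha\end{psmallmatrix}=\begin{psmallmatrix}1+\alpha&\beta\\\gamma&\alpha\end{psmallmatrix}$ has determinant $\alpha(1+\alpha)-\beta\gamma=\alpha+\alpha^2-\beta\gamma$; setting $\beta=1,\gamma=\alpha+\alpha^2-1$ makes this $1$, giving a genuine one-parameter family in $\cS_2$. Then $\det(T(Z(\alpha)))=\det\big(\begin{psmallmatrix}1+\alpha+y_{11}&1+y_{12}\\\alpha+\alpha^2-1+y_{21}&\alpha+y_{22}\end{psmallmatrix}\big)$ must equal $1$ for all $\alpha$; expanding, the coefficient of each power of $\alpha$ yields linear/quadratic relations among the $y_{s,t}$. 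Iterating with a second family (e.g.\ perturbing in a different off-diagonal direction, or using $\beta$ as the parameter) should force $y_{12}=y_{21}=0$ and $y_{11}=y_{22}$.

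Once $Y=yI$, I would plug in one more test matrix to constrain $y$: for instance $E_{1,1}$ itself has determinant $0$, so instead use $Z=\begin{psmallmatrix}1&0\\0&1\end{psmallmatrix}+\begin{psmallmatrix}0&1\\1&0\end{psmallmatrix}=\begin{psmallmatrix}1&1\\1&1\end{psmallmatrix}$ which has determinant $0$ — also no good; rather pick $Z=E_{1,1}+\begin{psmallmatrix}0&0\\1&1\end{psmallmatrix}=\begin{psmallmatrix}1&0\\1&1\end{psmallmatrix}\in\cS_2$, then $T(Z)=\begin{psmallmatrix}1+y&0\\1&1+y\end{psmallmatrix}$ has determinant $(1+y)^2$, forcing $(1+y)^2=1$, i.e.\ $y^2=0$ (characteristic $2$), hence $y=0$. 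Hmm — that would give only $Y=O$, so I must instead use a test that also allows $y=1$: note $I+Y=(1+y)I$, and $T(I)=I$ is already forced; the value $y=1$ corresponds to $T(E_{1,1})=E_{1,1}+I=E_{2,2}$, which swaps $E_{1,1}\leftrightarrow E_{2,2}$ — this is realized by conjugation by the permutation matrix $\begin{psmallmatrix}0&1\\1&0\end{psmallmatrix}$, which does preserve $\cS_2$, so $y=1$ genuinely occurs and my test above must be flawed. The resolution: with $y=1$, $Z=\begin{psmallmatrix}1&0\\1&1\end{psmallmatrix}$ maps to $\begin{psmallmatrix}0&0\\1&0\end{psmallmatrix}$, determinant $0\neq 1$ — contradiction?? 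No: $T$ is only assumed to satisfy $T(X)=X$ on $\mathfrak{sl}_2$ and $T(I)=I$; but $T$ is a bijective preserver, and $\begin{psmallmatrix}0&1\\1&0\end{psmallmatrix}X\begin{psmallmatrix}0&1\\1&0\end{psmallmatrix}$ does NOT fix $\mathfrak{sl}_2$ pointwise, so the $y=1$ case arises only after absorbing a conjugation into $P$; within the present normalization only $y=0$ survives, but the lemma statement allows $Y\in\{O,I\}$ precisely to cover both normalizations depending on the sign/transpose choices. So the honest plan is: run the determinant test above to conclude $(1+y)^2=1$, hence $y\in\{0,1\}$ — wait, $(1+y)^2=1+y^2$ in characteristic $2$, and $1+y^2=1\iff y^2=0\iff y=0$; to get $y=1$ allowed I'd need a test giving $(1+y)^2\in\{0,1\}$... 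I think the correct reading is that the lemma's conclusion $Y\in\{O,I\}$ is the combined statement over all sub-cases and the single-case argument gives $Y=O$ with the $Y=I$ possibility appearing through a parallel normalization; in the write-up I would state: "by replacing $P$ with $P\begin{psmallmatrix}0&1\\1&0\end{psmallmatrix}$ if necessary" to merge the two.

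The main obstacle: in characteristic $2$ the only invariant available is the determinant (since $\cB_2=\cC_2=\cD_2$), and determinant is a degree-$2$ polynomial, so a single test family gives only two or three scalar equations — not obviously enough to kill three unknown entries of $Y$ and then fix the last scalar. The work is in choosing two or three test families cleverly (exploiting that we may perturb in the $E_{2,1}$, $E_{1,2}$, and diagonal directions, and that $|\IF|>2$ forces polynomial identities) so that the accumulated equations force $Y=yI$ and $y\in\{0,1\}$; plus a separate, finite check when $\IF=\mathbb F_2$ (only $2^4=16$ matrices, $6$ of determinant $1$, so a direct verification that any bijective linear preserver of $\cS_2=\mathrm{GL}_2(\mathbb F_2)\cup\{0\text{-det part?}\}$ normalized as above sends $E_{1,1}$ to $E_{1,1}$ or $E_{2,2}$). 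I expect the bookkeeping of the polynomial coefficients to be the only real friction; conceptually it is the same mechanism as Steps~1--3 of Lemma~\ref{lem-for-n-divides-char}, just specialized to $n=2$ with determinant replacing the general $\cD_n$-membership criterion.
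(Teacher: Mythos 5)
Your overall mechanism is the same as the paper's: normalize to $T(I)=I$ and $T|_{\mathfrak{sl}_2(\IF)}=\mathrm{id}$, use $\cB_2=\cC_2=\cD_2$ so that the only available constraint is $\det(T(Z))=1$ whenever $\det(Z)=1$, and test $T$ on matrices $E_{1,1}+X$ with $X$ trace-zero of the right determinant. Your two polynomial families do in fact suffice if you carry out the expansion: for $Z(\alpha)=\begin{psmallmatrix}1+\alpha&1\\ \alpha^2+\alpha+1&\alpha\end{psmallmatrix}$ one gets $\det(T(Z(\alpha)))=y_{1,2}\alpha^2+(y_{1,1}+y_{2,2}+y_{1,2})\alpha+\mathrm{const}$, forcing $y_{1,2}=0$, $y_{1,1}=y_{2,2}$ and $y_{2,1}=y_{2,2}(1+y_{2,2})$; the mirror family with the parameter in the $(1,2)$ slot and a fixed $1$ in the $(2,1)$ slot has $\alpha^2$-coefficient $y_{2,1}$, forcing $y_{2,1}=0$, whence $y_{2,2}(1+y_{2,2})=0$ and $Y\in\{O,I\}$ with no further test needed. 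Two caveats: this polynomial-identity argument needs at least three parameter values, hence $|\IF|\ge 4$, and you leave $\IF=\mathbb{F}_2$ to an unperformed hand check, whereas the paper uses single test matrices whose off-diagonal entries are chosen to cancel those of $Y$ modulo $2$, working uniformly in $|\IF|$.

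The genuine error is in your concluding step. You decompose $Z=\begin{psmallmatrix}1&0\\1&1\end{psmallmatrix}$ as $E_{1,1}+\begin{psmallmatrix}0&0\\1&1\end{psmallmatrix}$ and apply $T|_{\mathfrak{sl}_2}=\mathrm{id}$ to the second summand, but that summand has trace $1\neq 0$, so it is not in $\mathfrak{sl}_2(\IF)$ and the computation $T(Z)=\begin{psmallmatrix}1+y&0\\1&1+y\end{psmallmatrix}$ is invalid (correctly, $Z=I+E_{2,1}$ with both summands trace-zero in characteristic $2$, so $T(Z)=Z$ and no constraint arises). This faulty test leads you to $(1+y)^2=1$, hence ``$y=0$ only,'' and then to the incorrect resolution that $Y=I$ occurs only under a different normalization to be absorbed into $P$. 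In fact $Y=I$ occurs within your fixed normalization: the map $X\mapsto X+\mathrm{tr}(X)I$ is a linear involution of $M_2(\IF)$ fixing $\mathfrak{sl}_2(\IF)$ pointwise and fixing $I$, it preserves the determinant since $\det(X+\mathrm{tr}(X)I)=\mathrm{tr}(X)^2+\mathrm{tr}(X)^2+\det(X)=\det(X)$ in characteristic $2$, and it sends $E_{i,i}$ to $E_{i,i}+I$. So any argument excluding $y=1$ in this normalization is necessarily wrong, and the proposed patch of replacing $P$ by $P\begin{psmallmatrix}0&1\\1&0\end{psmallmatrix}$ does not repair it. Delete that final step, rely on the two-family computation above, and supply the $\mathbb{F}_2$ verification separately.
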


\begin{proof}
	We may assume that $T(I)=I$ and $T(X)=X$ for all $X\in\mathfrak{sl}_2(\IF)$, since the remaining cases can be handled using a similar argument. It suffices to show that there exists $Y\in\{O,I\}\subseteq M_2(\IF)$ such that
	$$
	T(E_{i,i})=E_{i,i}+Y \quad \text{for all } 1\le i\le2.
	$$
	
	Using Remark~\ref{rem-char-two}, we have $\cS_2=\cD_2$. Hence we assume $T(\cD_2)\subseteq\cD_2$. Let $T(E_{i,i})=E_{i,i}+Y_i$ for all $1\le i\le2$. Since $T(E_{1,1}-E_{2,2})=E_{1,1}-E_{2,2}$, we may assume that
	$$
	T(E_{1,1})=E_{1,1}+Y \quad \text{and} \quad T(E_{2,2})=E_{2,2}+Y,
	$$
	where
	$$
	Y=
	\begin{psmallmatrix}
		y_{1,1} & y_{1,2}\\
		y_{2,1} & y_{2,2}
	\end{psmallmatrix}
	\in M_2(\IF).
	$$
	
	We now show that $Y$ is a scalar matrix. Suppose that $y_{1,2}\neq0$ or $y_{2,1}\neq0$. Without loss of generality, assume $y_{1,2}\neq0$, as a similar argument applies to the case $y_{2,1}\neq0$. Consider the trace-zero matrix
	$$
	X=
	\begin{psmallmatrix}
		0 & y_{1,2}\\
		-1/y_{1,2} & 0
	\end{psmallmatrix}.
	$$
	Then Remark~\ref{rem-char-two} implies $X+E_{1,1},X+E_{2,2}\in\cD_2$. Since $\mathrm{char}(\IF)=2$ (i.e., $-1=1$), we have
	$$
	T(X+E_{1,1})=
	\begin{psmallmatrix}
		1+y_{1,1} & 0\\
		y_{2,1}-1/y_{1,2} & y_{2,2}
	\end{psmallmatrix},
	\quad
	T(X+E_{2,2})=
	\begin{psmallmatrix}
		y_{1,1} & 0\\
		y_{2,1}-1/y_{1,2} & 1+y_{2,2}
	\end{psmallmatrix}.
	$$
	
Using $T(X+E_{i,i})\in\cD_2$ for $1\le i\le2$, we obtain
	$$
	\det(T(X+E_{1,1}))=(1+y_{1,1})y_{2,2}=1
	\quad \text{and} \quad
	\det(T(X+E_{2,2}))=y_{1,1}(1+y_{2,2})=1.
	$$
	It follows that $y_{1,1}=y_{2,2}$ and $(1+y_{1,1})y_{1,1}=1$, i.e., $y_{1,1}^2+y_{1,1}+1=0$. Since $\mathrm{char}(\IF)=2$, the polynomial $t^2+t+1$ is irreducible over $\IF$, which yields a contradiction. Therefore $y_{1,2}=y_{2,1}=0$, and hence
	$$
	Y=
	\begin{psmallmatrix}
		y_{1,1} & 0\\
		0 & y_{2,2}
	\end{psmallmatrix}.
	$$
	
	Next consider the trace-zero matrix
	$$
	X=
	\begin{psmallmatrix}
		0 & 1\\
		1 & 0
	\end{psmallmatrix}.
	$$
	Using Remark~\ref{rem-char-two}, we have $X+E_{1,1},X+E_{2,2}\in\cD_2$. Thus
	$$
	T(X+E_{1,1})=
	\begin{psmallmatrix}
		1+y_{1,1} & 1\\
		1 & y_{2,2}
	\end{psmallmatrix}\in\mathcal D_2,
	\qquad
	T(X+E_{2,2})=
	\begin{psmallmatrix}
		y_{1,1} & 1\\
		1 & 1+y_{2,2}
	\end{psmallmatrix}
	\in \cD_2.
	$$
Since $-1=1$, we obtain
	$$
	\det(T(X+E_{1,1}))=(1+y_{1,1})y_{2,2}+1=1
	\quad \text{and} \quad
	\det(T(X+E_{2,2}))=y_{1,1}(1+y_{2,2})+1=1.
	$$
	This implies $y_{1,1}=y_{2,2}$ and $(1+y_{1,1})y_{1,1}=0$. Hence $y_{1,1}=0$ or $1$, and therefore $Y\in\{O,I\}$. This completes the proof.
\end{proof}

The following result classifies the bijective linear preservers of $\cS_{2}$ when $\mathrm{char}(\IF)=2$.

\begin{theorem}\label{thm-main-2}
	Let $\mathrm{char}(\IF)=2$, and let $T: M_2(\IF) \rightarrow M_2(\IF)$ be a bijective linear map. Then $T(\cS_{2}) \subseteq \cS_{2}$ if and only if there exist invertible matrices $P, Q\in M_2(\IF)$ with $\det(PQ)=1$ such that $T$ has the form
	$$
	X \mapsto PXQ \quad \text{or} \quad X \mapsto PX^tQ.
	$$
\end{theorem}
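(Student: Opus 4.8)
The plan is to follow the architecture of the proof of Theorem~\ref{thm-main}, with one genuinely new feature: when $n=2=\mathrm{char}(\IF)$ the identity matrix is trace-zero, so the restriction of $T$ to $\mathfrak{sl}_2(\IF)$ no longer determines $T$, and this gap is closed by Lemma~\ref{lem-char-two}. Sufficiency is immediate: by Remark~\ref{rem-char-two}, $\cS_2=\cD_2=\{X\in M_2(\IF):\det(X)=1\}$ (note $-1=1$ in $\IF$), and if $T(X)=PXQ$ or $PX^tQ$ with $\det(PQ)=1$ then $\det(T(X))=\det(PQ)\det(X)=\det(X)$, so $T(\cS_2)\subseteq\cS_2$.

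For necessity, first apply Lemma~\ref{lem-main-all-form}. Since $n=2$ is a multiple of $\mathrm{char}(\IF)=2$, the scalar produced there is $r=1$, so there is an invertible $P\in M_2(\IF)$ with $T(X)=T(I)PXP^{-1}$ for all $X\in\mathfrak{sl}_2(\IF)$ (I treat this case; the transpose case $T(X)=T(I)PX^tP^{-1}$ is entirely parallel). Since $T(I)\in\cS_2$ we have $\det(T(I))=1$, so $T(I)$ is invertible. Lemma~\ref{lem-char-two} now supplies $Y\in\{O,I\}$ with $T(E_{i,i})=T(I)P(E_{i,i}+Y)P^{-1}$ for $i=1,2$. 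As $\{E_{1,1},E_{1,2},E_{2,1},E_{2,2}\}$ is a basis of $M_2(\IF)$ and $E_{1,2},E_{2,1}\in\mathfrak{sl}_2(\IF)$, this determines $T$ on all of $M_2(\IF)$, and it remains only to identify the resulting form.

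If $Y=O$, then $T$ agrees on this basis with $X\mapsto T(I)PXP^{-1}$, so $T(X)=P_0XQ_0$ with $P_0=T(I)P$ and $Q_0=P^{-1}$; here $P_0Q_0=T(I)$, hence $\det(P_0Q_0)=1$. If $Y=I$, then $E_{1,1}+I=E_{2,2}$ and $E_{2,2}+I=E_{1,1}$ in $M_2(\IF)$ (since $I=E_{1,1}+E_{2,2}$ and $\mathrm{char}(\IF)=2$), so $T(E_{1,1})=T(I)PE_{2,2}P^{-1}$ and $T(E_{2,2})=T(I)PE_{1,1}P^{-1}$, while $T(E_{1,2})=T(I)PE_{1,2}P^{-1}$ and $T(E_{2,1})=T(I)PE_{2,1}P^{-1}$. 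Writing $J=\begin{psmallmatrix}0&1\\1&0\end{psmallmatrix}$ (so $J=J^{-1}$), a direct check on the basis gives $T(X)=T(I)PJX^tJP^{-1}$ for all $X$; equivalently, the linear map $X\mapsto\mathrm{tr}(X)I+X$ arising here on the diagonal block equals $X\mapsto JX^tJ$, so a transpose necessarily appears. Hence $T(X)=P_0X^tQ_0$ with $P_0=T(I)PJ$, $Q_0=JP^{-1}$, and $P_0Q_0=T(I)$. In the transpose case for the restriction to $\mathfrak{sl}_2(\IF)$, the same dichotomy gives $X\mapsto P_0X^tQ_0$ when $Y=O$ and $X\mapsto P_0XQ_0$ when $Y=I$, again with $P_0Q_0=T(I)$ and $\det(P_0Q_0)=1$. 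This covers all cases.

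The one step needing care is the $Y=I$ case, where one must recognize that the map $X\mapsto\mathrm{tr}(X)I+X$ on the diagonal entries is realized by $X\mapsto JX^tJ$, so that $T$ remains within the stated family and the transpose-type maps genuinely cannot be omitted from the conclusion. Everything else is routine bookkeeping, and — in contrast to Theorem~\ref{thm-main}(i),(ii) — no $\pm1$ scalar ambiguity survives here, because $\pm1=1$ in characteristic $2$.
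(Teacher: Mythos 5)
Your proposal is correct and follows essentially the same route as the paper: sufficiency via Remark~\ref{rem-char-two}, then Lemma~\ref{lem-main-all-form} (with $r=1$ since $2$ divides $\mathrm{char}(\IF)$) combined with Lemma~\ref{lem-char-two}, and a case split on $Y\in\{O,I\}$ in which the $Y=I$ case is absorbed by conjugating the transpose with the flip matrix $\begin{psmallmatrix}0&1\\1&0\end{psmallmatrix}$ (the paper's $R=E_{1,2}+E_{2,1}$). The identification of $X\mapsto\mathrm{tr}(X)I+X$ with $X\mapsto JX^{t}J$ is exactly the computation the paper performs entrywise.
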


\begin{proof}
	The converse is straightforward and follows from Remark~\ref{rem-char-two}. Therefore, we focus on proving the necessity. By Remark~\ref{rem-char-two}, we have $\cB_2=\cC_2=\cD_2$. Hence $T(\cD_{2}) \subseteq \cD_{2}$. Let $A := T(I)$.
	
	Since $\mathrm{char}(\IF)=2$ is a multiple of $n=2$, Lemma~\ref{lem-main-all-form} implies that there exists an invertible matrix $S \in M_2(\IF)$ such that the restriction of $T$ to $\mathfrak{sl}_{2}(\IF)$ has one of the following forms:
	\begin{equation*}
		X \mapsto ASXS^{-1} \quad \text{or} \quad X \mapsto ASX^tS^{-1}.
	\end{equation*}
	
	Without loss of generality, we consider the first case, as the second case can be handled by a similar argument. In view of Lemma~\ref{lem-char-two}, there exists $Y\in\{O,I\}$ such that
	$$
	T(E_{i,i})=AS(E_{i,i}+Y)S^{-1} \quad \text{for all } 1\le i\le2.
	$$
	
	Observe that every matrix 
	$
	Z=\begin{psmallmatrix}
		z_{1,1} & z_{1,2}\\
		z_{2,1} & z_{2,2}
	\end{psmallmatrix}
	\in M_2(\IF)
	$
	can be written as
	\begin{equation}\label{eq-char-2}
		Z = z_{1,1}E_{1,1} + z_{2,2}E_{2,2} + Z_1,
	\end{equation}
	where 
	$
	Z_1=
	\begin{psmallmatrix}
		0 & z_{1,2}\\
		z_{2,1} & 0
	\end{psmallmatrix}
	$
	is a trace-zero matrix in $M_2(\IF)$. We now consider the following two cases:
	
	\begin{enumerate}[label={\normalfont(\roman*)}]
		\item If $Y=O$, then $T(E_{i,i})=ASE_{i,i}S^{-1}$ for all $1 \le i \le 2$. Hence
		$$
		T(Z)=ASZS^{-1} \quad \text{for all } Z\in M_2(\IF).
		$$
		
		\item If $Y=I$, then
		$
		T(E_{1,1}) = AS(E_{1,1}+I)S^{-1}=ASE_{2,2}S^{-1}
		$
		and
		$
		T(E_{2,2}) = AS(E_{2,2}+I)S^{-1}=ASE_{1,1}S^{-1}.
		$
		Using \eqref{eq-char-2}, we obtain that $T$ has the form
		\begin{equation*}
			\begin{psmallmatrix}
				z_{1,1} & z_{1,2}\\
				z_{2,1} & z_{2,2}
			\end{psmallmatrix}
			\mapsto
			AS
			\begin{psmallmatrix}
				z_{2,2} & z_{1,2}\\
				z_{2,1} & z_{1,1}
			\end{psmallmatrix}
			S^{-1}.
		\end{equation*}
	\end{enumerate}
	
	Consider the invertible matrix $R = E_{1,2} + E_{2,1} \in M_2(\IF)$. Then
	$$
	T(Z)=ASRZ^{t}R^{-1}S^{-1}=A(SR)Z^{t}(SR)^{-1} \quad \text{for all } Z\in M_2(\IF).
	$$
	
	Now choose an invertible matrix $Q \in M_2(\IF)$ depending on $Y$ as follows: $Q=S$ if $Y=O$, and $Q=SR$ if $Y=I$. Then $T$ has the form
	$$
	X\mapsto AQ^{-1}XQ \quad \text{or} \quad X\mapsto AQ^{-1}X^tQ.
	$$
	The proof now follows by taking $P=AQ^{-1}$.
\end{proof}

Next, let $n=3$ and $\char(\IF)=2$. Unlike the case $n=2=\mathrm{char}(\IF)$, here we may have $\cB_3 \subsetneq \cC_3 \subsetneq \cD_3$, as shown in the following remark. We will also classify elements of $\cB_3$ in this remark.

\begin{remark}\label{rem-order-3-char-2}
	Let $\char(\IF)=2$. In view of Proposition~\ref{prop-rev-class} (see also \cite[Theorem 3.1]{AOPV}), an element $A \in M_3(\IF)$ belongs to $\cB_3$ if and only if $A$ is similar to one of the following matrices:
	\begin{equation*}
		I, \quad
		\begin{psmallmatrix}
			1 & 1 & 0\\
			0 & 1 & 0\\
			0 & 0 & 1
		\end{psmallmatrix},
		\quad \text{or} \quad
		\begin{psmallmatrix}
			0 & 0 & 1\\
			1 & 0 & a\\
			0 & 1 & a
		\end{psmallmatrix},
		\quad \text{where } a \in \IF .
	\end{equation*}
	
	Note that the matrix
	\[
	\begin{psmallmatrix}
		0 & 0 & 1\\
		1 & 0 & 0\\
		0 & 1 & 1
	\end{psmallmatrix}
	\notin \cB_3,
	\]
	but it belongs to $\cC_3$ (see \cite[Corollary 3(b)]{Ba}). Suppose that $|\IF|>2$. Then $t^2+t+1$ is reducible in $\IF[t]$, i.e., there exists $\alpha \in \IF$ such that $\alpha^4+\alpha^2+1=0$. From \cite[p.~57]{Ba}, we have $\alpha I\notin\cC_3$ but $\alpha I\in\cD_3$. However, if $|\IF|=2$, then $t^2+t+1$ is irreducible in $\IF[t]$, and by Remark~\ref{rem_C_n} we have $\cC_3=\cD_3$. We summarize these observations as follows:
	\begin{enumerate}[label={\normalfont(\roman*)}]
		\item If $|\IF|>2$ and $\char(\IF)=2$, then $\cB_3 \subsetneq \cC_3 \subsetneq \cD_3$.
		\item If $|\IF|=2$ and $\char(\IF)=2$, then $\cB_3 \subsetneq \cC_3=\cD_3$.\qed
	\end{enumerate}
\end{remark}

To characterize bijective linear preservers of $\cB_3$ and $\cC_3$ over a field $\IF$ with $\mathrm{char}(\IF)=2$, we require the following analogue of Lemma~\ref{lem-key-nonunital-B}.

\begin{lemma}\label{lem-nonunital-C-3}
	Let $\mathrm{char}(\IF)=2$, and let $T: M_3(\IF) \rightarrow M_3(\IF)$ be a bijective linear map such that either $T(\cB_{3}) \subseteq \cB_{3}$, or $T(\cC_{3}) \subseteq \cC_{3}$ and $|\IF|>2$. Suppose that there exist a nonzero scalar $r \in \IF$ and an invertible matrix $P \in M_3(\IF)$ such that the restriction of $T$ to $\mathfrak{sl}_3(\IF)$ has one of the following forms:
	\begin{equation*}
		X \mapsto r T(I)PXP^{-1} \quad \text{or} \quad X \mapsto r T(I)PX^tP^{-1}.
	\end{equation*}
	Then $T(I)=I$.
\end{lemma}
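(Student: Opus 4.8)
The plan is to mimic the structure of Lemma~\ref{lem-key-nonunital-B}: assume $A := rT(I)$ or rather $A := T(I) \in \cB_3$ (since $I \in \cB_3$) and suppose $A \neq I$; then produce an explicit $X \in \cB_3$ (or $X\in\cC_3$, in the second hypothesis) with $AX \notin \cB_3$ (resp. $AX\notin\cC_3$), contradicting $T(\cB_3)\subseteq\cB_3$ (resp. $T(\cC_3)\subseteq\cC_3$). First I would normalize: by replacing $T$ with $X\mapsto T(P^{-1}XP)$ we may take $P=I$, so that $T(X) = rAX$ for all trace-zero $X$; and since $P^{-1}\cB_3P\subseteq\cB_3$, $P^{-1}\cC_3P\subseteq\cC_3$, we may put $A$ into the rational canonical form of Proposition~\ref{prop-rev-class}. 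Note that over a field of characteristic~$2$ every companion matrix $C_f$ with $f(0)=1$ lies in $\cB_2$ or $\cB_n$ precisely when $f$ is (a power of) a self-reciprocal irreducible, exactly as in the odd-characteristic case, so Lemmas~\ref{lem-special-nil-comp-type2} and~\ref{lem-special-nil-comp-type1} still apply verbatim; the only genuinely new feature is that $-1=1$, which collapses some of the case analysis.

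Next I would enumerate the possible rational canonical forms of a matrix $A\in\cB_3$, $A\ne I$, using Remark~\ref{rem-order-3-char-2}: up to similarity $A$ is $\begin{psmallmatrix}1&1&0\\0&1&0\\0&0&1\end{psmallmatrix}$ (a non-identity unipotent), or a $3\times3$ companion matrix $C_f$ with $f$ a power of a self-reciprocal irreducible of degree dividing~$3$, or a direct sum $C_g\oplus C_{\widetilde g}\oplus(1)$ with $g$ linear non-self-reciprocal (i.e. $g(x)=x-a$, $a\ne0,1$, so $A$ carries a $2\times2$ block similar to $\diag(a,a^{-1})$ together with a $(1)$). For the companion-matrix cases with an indecomposable $3\times3$ block I would invoke Lemma~\ref{lem-special-nil-comp-type1} directly to get a strictly upper triangular $N$ with $A(I+N)\notin\cB_3$, and since $X=I+r^{-1}N$ is unipotent it lies in $\cB_3\subseteq\cC_3$, while $T(X)=rA(I+r^{-1}N)\cdot r^{-1}$… — more carefully, I take $X=I+r^{-1}N$, which is unipotent hence in $\cB_3$, and compute $T(X)=T(I)+r^{-1}T(N)=A+r^{-1}(rAN)=A(I+N)\notin\cB_3$. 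The non-identity unipotent case and the $C_g\oplus C_{\widetilde g}\oplus(1)$ case I would treat by hand: for the latter, write $A$ (up to similarity) as $\begin{psmallmatrix}0&1\\1&a+a^{-1}\end{psmallmatrix}\oplus(1)$ (using that $\diag(a,a^{-1})$ is similar to this companion matrix, cf.\ Remark~\ref{rem-special-two-decomp}) and build a trace-zero $X\in\cB_3$ of the type used in Lemma~\ref{lem-special-three}, adapted to characteristic~$2$, so that $AX$ has non-self-reciprocal characteristic polynomial; since $|\IF|>2$ in the $\cC_3$ case and the underlying field is arbitrary in the $\cB_3$ case, I can still choose the free parameter avoiding the finitely many bad values. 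The point throughout is that $X$ is either unipotent or trace-zero, hence in $\mathfrak{sl}_3(\IF)$ or differs from $I$ by a trace-zero matrix, so that $T(X)=rAX$ (up to the harmless scalar, absorbed as above), and $rAX\notin\cB_3$ gives the contradiction.

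For the stronger hypothesis $T(\cC_3)\subseteq\cC_3$ with $|\IF|>2$ I additionally need to rule out $A=T(I)$ being similar to a scalar matrix $\alpha I$ with $\alpha\ne1$: but $\alpha I\in\cD_3$ and, when $\alpha^4+\alpha^2+1\ne0$, also $\alpha I\in\cC_3$, whereas if $\alpha I\in\cC_3$ is a scalar that is not $\pm I$ one still needs $A\ne\pm I$, and since $\char(\IF)=2$ we have $-I=I$, so $A$ scalar forces $A=\alpha I$; I then pick $X\in\cC_3$ (using Remark~\ref{rem-order-3-char-2}, e.g.\ the matrix $\begin{psmallmatrix}0&0&1\\1&0&0\\0&1&1\end{psmallmatrix}\in\cC_3$ and its translates) so that $\alpha X$ fails to lie in $\cC_3$, invoking the classification of scalar matrices in $\cC_3$ from \cite[p.~57]{Ba}. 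The main obstacle I anticipate is precisely this $\cC_3$-versus-$\cD_3$ bookkeeping: unlike for $\cB_3$, where Proposition~\ref{prop-rev-class} gives a clean similarity classification, membership in $\cC_3$ is only partially understood, so each candidate $A\ne I$ must be paired with an explicit certificate $X\in\cC_3$ for which $AX\notin\cC_3$, and verifying $AX\notin\cC_3$ requires the somewhat delicate facts from \cite{Ba} about which non-scalar and scalar $3\times3$ matrices of determinant~$1$ are products of three involutions in characteristic~$2$. Once $A=T(I)$ is shown to be the identity, the conclusion $T(I)=I$ is immediate.
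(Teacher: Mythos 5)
Your treatment of the first alternative ($T(\cB_3)\subseteq\cB_3$) follows the paper's strategy, but your enumeration of the rational canonical forms of $A=T(I)\in\cB_3\setminus\{I\}$ is incomplete: besides the unipotent block, the indecomposable cubic companion matrices, and $\diag(a,a^{-1},1)$, there is the form $C_f\oplus(1)$ with $f=x^2+cx+1$ an \emph{irreducible} self-reciprocal quadratic (e.g.\ $\begin{psmallmatrix}0&1\\1&1\end{psmallmatrix}\oplus(1)$ over $\mathbb{F}_2$). None of your cited tools reaches it: Lemma~\ref{lem-special-nil-comp-type1} needs a block of size $\ge 3$, Lemma~\ref{lem-special-nil-comp-type2} needs a non-self-reciprocal factor, and no block-local perturbation can work because in characteristic $2$ every $2\times2$ matrix of determinant $1$ lies in $\cB_2$ (Remark~\ref{rem-char-two}), so $A_1X_1\in\cB_2$ for \emph{every} admissible $X_1$. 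Even the char-$2$ adaptation of Lemma~\ref{lem-special-three} that you propose for the $\diag(a,a^{-1},1)$ case breaks down here over $\IF=\mathbb{F}_2$ (the required parameter $a$ with $a^2+aa_1+1\ne r^2$ does not exist). The paper avoids all of this by observing that every non-unipotent element of $\cB_3$ in characteristic $2$ is non-derogatory, hence similar to $\begin{psmallmatrix}0&0&1\\1&0&a\\0&1&a\end{psmallmatrix}$, and then perturbing by $bE_{1,3}$ with $b\neq 0$ to destroy self-reciprocality of the characteristic polynomial in one stroke, valid over every field of characteristic $2$ including $\mathbb{F}_2$.

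The second alternative is where your plan genuinely fails. Under the hypothesis $T(\cC_3)\subseteq\cC_3$ you only get $A=T(I)\in\cC_3$, not $A\in\cB_3$, and for $|\IF|>2$ and $\mathrm{char}(\IF)=2$ the set $\cC_3$ is all of $\cD_3$ except the two scalars $\alpha I$ with $\alpha^2+\alpha+1=0$; in particular $\cC_3\setminus\cB_3$ contains a vast supply of non-scalar matrices, so ruling out only scalar $A\ne I$ does not come close to covering the possibilities. Worse, the certificates from the $\cB_3$ analysis are useless here: $AX\notin\cB_3$ does not imply $AX\notin\cC_3$, and since $\cC_3$ omits only two matrices from $\cD_3$, producing $X\in\cC_3$ with $AX\notin\cC_3$ by hand is essentially hopeless. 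The paper's actual argument is a short global one that you are missing: by bijectivity and $T(\cC_3)\subseteq\cC_3$, the preimage of the exceptional scalar $bI\in\cD_3\setminus\cC_3$ must again be an exceptional scalar $aI$, so linearity gives $T(I)=(b/a)I$, and Ballantine's constraint on scalars in $\cC_3$ forces $(b/a)^2=1$, hence $b/a=1$ in characteristic $2$ and $T(I)=I$. Without this (or an equivalent) idea, the $\cC_3$ half of the lemma is not proved.
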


\begin{proof}
	We consider the case $T(X)=rT(I)PXP^{-1}$ for all $X\in\mathfrak{sl}_3(\IF)$, as the second case can be proved by a similar argument. Let $A:=T(I)$. Without loss of generality, assume that $T(X)=rAX$ for all $X\in\mathfrak{sl}_3(\IF)$. To prove the result, we consider the following two cases.
	
	\textbf{Case (a).} Let $T(\cB_{3}) \subseteq \cB_{3}$. Then $A=T(I)\in\cB_{3}$. Suppose that $A\neq I$. Then, by Remark~\ref{rem-order-3-char-2}, $A$ must be one of the matrices
	\[
	\begin{psmallmatrix}
		1 & 1 & 0\\
		0 & 1 & 0\\
		0 & 0 & 1
	\end{psmallmatrix},
	\quad
	\text{or}
	\quad
	\begin{psmallmatrix}
		0 & 0 & 1\\
		1 & 0 & a\\
		0 & 1 & a
	\end{psmallmatrix},
	\]
	where $a\in\IF$. In each of the following subcases, we construct a matrix $X\in\cB_{3}$ such that $T(X)\notin\cB_{3}$, which yields a contradiction.
	
	\textbf{Subcase (i).} Let
$
	A=
	\begin{psmallmatrix}
		1 & 1 & 0\\
		0 & 1 & 0\\
		0 & 0 & 1
	\end{psmallmatrix}.
$
	Consider
$
	X=
	\begin{psmallmatrix}
		0 & 1 & 1\\
		0 & 0 & 1\\
		1 & 1 & 0
	\end{psmallmatrix},
$
	which lies in $\mathfrak{sl}_3(\IF)\cap\cB_{3}$. To verify that $X\in\cB_{3}$, note that $gX=X^{-1}g$, where
$
	g=
	\begin{psmallmatrix}
		1 & 0 & 0\\
		1 & 1 & 0\\
		0 & 0 & 1
	\end{psmallmatrix}.
$
	Since $X\in\cB_{3}$, we obtain
	\[
	T(X)=rAX=
	\begin{psmallmatrix}
		0 & r & 0\\
		0 & 0 & r\\
		r & r & 0
	\end{psmallmatrix}.
	\]
	The characteristic polynomial of $T(X)$ is
	\[
	\det(\lambda I-T(X))=\lambda^3-r^2\lambda-r^3,
	\]
	which is not self-reciprocal. Hence, by Proposition~\ref{prop-rev-class}, we conclude that $T(X)\notin\cB_3$.
	
	\textbf{Subcase (ii).} Let
$
	A=
	\begin{psmallmatrix}
		0 & 0 & 1\\
		1 & 0 & a\\
		0 & 1 & a
	\end{psmallmatrix}
$
	for some $a\in\IF$. Consider $X_b=I+r^{-1}bE_{1,3}\in\cB_3$, where $b\in\IF$. Then
	\[
	T(X)=T(I)+T(r^{-1}bE_{1,3})=A(I+bE_{1,3})=
	\begin{psmallmatrix}
		0 & 0 & 1\\
		1 & 0 & b+a\\
		0 & 1 & a
	\end{psmallmatrix}.
	\]
	The characteristic polynomial of $T(X)$ is
	\[
	\det(\lambda I-T(X))=\lambda^3-a\lambda^2-(a+b)\lambda-1.
	\]
	Choose $b=1$ if $a=0$, and $b=a$ if $a\neq0$. For this choice of $b$, let $X:=X_b$. Then the characteristic polynomial of $T(X)$ is not self-reciprocal, and hence Proposition~\ref{prop-rev-class} implies that $T(X)\notin\cB_3$.
	
	From the above discussion, we conclude that if $A\neq I$, then there exists $X\in\cB_{3}$ such that $T(X)\notin\cB_{3}$, contradicting the assumption that $T(\cB_{3})\subseteq\cB_{3}$. Hence $T(I)=A=I$.
	
	\textbf{Case (b).} Let $T(\cC_{3}) \subseteq \cC_{3}$ and $|\IF|>2$. Then $A=T(I)\in\cC_{3}$ and $t^2+t+1$ is reducible in $\IF[t]$, i.e., there exists $b\in\IF$ such that $b^4+b^2+1=0$. Since $\mathrm{char}(\IF)=2$, it follows from \cite[p.~57]{Ba} that there are exactly two matrices in $\cD_3$ that are not in $\cC_3$, namely matrices of the form $\alpha I$, where $\alpha^4+\alpha^2+1=0$. Furthermore, since $T(\cC_3)\subseteq\cC_3$ and $T$ is bijective, there exists $a\in\IF$ with $a^4+a^2+1=0$ such that $T(aI)=bI$. By the linearity of $T$, we obtain $T(I)=(b/a)I$. Using \cite[Corollary 3(a)]{Ba}, we have $(b/a)^2=1$. Since $\mathrm{char}(\IF)=2$, it follows that $b/a=1$, and therefore $T(I)=A=I$. This completes the proof.
\end{proof}

The next result is an analogue of Lemma~\ref{lem-main-all-unital-scalar} for the case $n=3$ and $\mathrm{char}(\IF)=2$.

\begin{lemma}\label{lem-unital-scalar-order-3}
	Let $\mathrm{char}(\IF)=2$, and let $T: M_3(\IF) \rightarrow M_3(\IF)$ be a bijective linear map such that $T(\cS_{3}) \subseteq \cS_{3}$. Suppose that there exist a nonzero scalar $r \in \IF$ and an invertible matrix $P \in M_3(\IF)$ such that the restriction of $T$ to $\mathfrak{sl}_3(\IF)$ has one of the following forms:
	\begin{equation*}
		X \mapsto r T(I)PXP^{-1} \quad \text{or} \quad X \mapsto rT(I)PX^tP^{-1}.
	\end{equation*}
	Then $r=1$.
\end{lemma}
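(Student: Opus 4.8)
The plan is to pin down $r$ by applying $T$ to the one-parameter family of matrices in $\cB_3$ exhibited in Remark~\ref{rem-order-3-char-2} and reading off the determinants of the images. We may assume that the restriction of $T$ to $\mathfrak{sl}_3(\IF)$ is $X\mapsto rT(I)PXP^{-1}$, since in the transposed case one simply replaces $A_a$ below by $A_a^{t}$ throughout, which changes none of the relevant determinants (as $A_a$ and $A_a^{t}$ share the same characteristic polynomial). Write $A:=T(I)$. Since $I\in\cB_3\subseteq\cS_3$ we have $A\in\cS_3\subseteq\cD_3$, and as $\mathrm{char}(\IF)=2$ every product of involutions has determinant $1$; hence $\det(A)=1$, and likewise $\det(T(Z))=1$ for every $Z\in\cS_3$.

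Next I would exploit that $3=1$ in $\IF$, so that $3$ is invertible modulo $\mathrm{char}(\IF)$ and every $Z\in M_3(\IF)$ decomposes as $Z=\mathrm{tr}(Z)I+(Z-\mathrm{tr}(Z)I)$ with $Z-\mathrm{tr}(Z)I\in\mathfrak{sl}_3(\IF)$. For $a\in\IF$ set
\[
A_a:=\begin{psmallmatrix} 0 & 0 & 1 \\ 1 & 0 & a \\ 0 & 1 & a \end{psmallmatrix},
\]
which by Remark~\ref{rem-order-3-char-2} lies in $\cB_3\subseteq\cS_3$ for every $a$, has characteristic polynomial $x^{3}+ax^{2}+ax+1$, and satisfies $\mathrm{tr}(A_a)=a$. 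Applying $T$ gives
$T(A_a)=aA+rAP(A_a-aI)P^{-1}=AP\bigl(a(1+r)I+rA_a\bigr)P^{-1}$
(using $1-r=1+r$), so, since $\det(A)=1$,
\[
\det\bigl(T(A_a)\bigr)=\det\bigl(a(1+r)I+rA_a\bigr).
\]
A direct $3\times 3$ expansion then yields $\det\bigl(a(1+r)I+rA_a\bigr)=a^{3}(1+r^{2})+a^{2}r^{2}(1+r)+r^{3}$.

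Since $T(A_a)\in\cS_3\subseteq\cD_3$, this determinant equals $1$ for every $a\in\IF$. Taking $a=0$ forces $r^{3}=1$. Substituting $r^{3}=1$ back in, the condition becomes $(1+r^{2})a^{2}(a+1)+1=1$, i.e. $(1+r^{2})a^{2}(a+1)=0$, for all $a\in\IF$. If $|\IF|>2$, choose $a\in\IF\setminus\{0,1\}$; then $a^{2}(a+1)\neq 0$, so $1+r^{2}=(1+r)^{2}=0$ and hence $r=1$. If $|\IF|=2$, then $r$ is the unique nonzero element $1$. In either case $r=1$, as required.

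The computation is routine; the one genuine point is to test $T$ against the entire family $\{A_a\}_{a\in\IF}\subseteq\cB_3$ supplied by Remark~\ref{rem-order-3-char-2} rather than against isolated matrices, as it is precisely the freedom in the parameter $a$, combined with $\det(T(I))=1$, that excludes $r$ being a primitive cube root of unity. The only care needed is the characteristic-$2$ arithmetic (in particular $3=1$, so the scalar/traceless splitting remains available).
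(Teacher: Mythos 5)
Your proof is correct and follows essentially the same strategy as the paper: test $T$ on matrices of the form $\begin{psmallmatrix}0&0&1\\1&0&a\\0&1&a\end{psmallmatrix}\in\cB_3$ from Remark~\ref{rem-order-3-char-2}, use the scalar-plus-traceless splitting (valid since $3=1$ in $\IF$), and force the image determinant to equal $1$. The only difference is cosmetic: the paper picks the single matrix with $a=r$ to get $r^4=1$ directly, whereas you run over the whole family $\{A_a\}$ to get $r^3=1$ together with $(1+r)^2a^2(a+1)=0$, which needs the small case split on $|\IF|$; both computations check out.
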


\begin{proof}
	We consider only the first case, as the second follows by a similar argument. Without loss of generality, assume that $T(X)=rT(I)X$ for all $X\in\mathfrak{sl}_3(\IF)$.
	
	Let $A:=T(I)$. Then $A \in \cS_3 \subseteq \cD_3$, and hence $\det(A)=1$. Consider
	$
	X=\begin{psmallmatrix}
		0 & 0 & 1\\
		1 & 0 & r\\
		0 & 1 & r
	\end{psmallmatrix}\in M_3(\IF).
	$
	Using Remark~\ref{rem-order-3-char-2}, we have $X\in\cB_3 \subseteq \cS_3$. Since $\mathrm{char}(\IF)=2$ and $r\neq0$, we can write
	$$
	X=rI+\begin{psmallmatrix}
		r & 0 & 1\\
		1 & r & r\\
		0 & 1 & 0
	\end{psmallmatrix}.
	$$
	This implies that
	$$
	T(X)=A\begin{psmallmatrix}
		r+r^2 & 0 & r\\
		r & r+r^2 & r^2\\
		0 & r & r
	\end{psmallmatrix}.
	$$
	Using $\det(A)=1$, we obtain
	$$
	\det(T(X))=(r^4+2r^2)\det(A)=r^4.
	$$
	Since $X\in \cS_3$, it follows that $T(X)\in \cS_3 \subseteq \cD_3$, and hence $\det(T(X))=r^4=1$. Because $\char(\IF)=2$, the equation $r^4=1$ implies $r=1$.
\end{proof}

The following result generalizes Theorem~\ref{thm-main} to the case $n=3$ and $\char(\IF)=2$, and characterizes the bijective linear preservers of $\cS_3$; cf.~Theorem~\ref{thm-prod-3-not2}.

\begin{theorem}\label{thm-main-order-3-char-2}
	Let $\char(\IF)=2$, and let $T: M_3(\IF) \rightarrow M_3(\IF)$ be a bijective linear map. Then the following statements hold.
	\begin{enumerate}[label={\normalfont(\roman*)}]
		\item $T(\cB_{3}) \subseteq \cB_{3}$, or $T(\mathcal C_{3}) \subseteq \mathcal C_{3}$ and $|\IF|>2$, if and only if there exists an invertible matrix $P\in M_3(\IF)$ such that $T$ has the form
		$$
		X \mapsto PXP^{-1} \quad \text{or} \quad X \mapsto PX^tP^{-1}.
		$$
		
		\item $T(\mathcal C_{3}) \subseteq \mathcal C_{3}$ and $|\IF|=2$, or $T(\mathcal D_{3}) \subseteq \mathcal D_{3}$, if and only if there exist invertible matrices $P,Q\in M_3(\IF)$ with $\det(PQ)=1$ such that $T$ has the form
		$$
		X \mapsto PXQ \quad \text{or} \quad X \mapsto PX^tQ.
		$$
	\end{enumerate}
\end{theorem}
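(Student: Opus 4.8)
The plan is to follow the template established for the characteristic-not-two case in Theorem~\ref{thm-main}, assembling the proof from the three structural lemmas available for $n=3$ and $\mathrm{char}(\IF)=2$, namely Lemma~\ref{lem-main-all-form}, Lemma~\ref{lem-nonunital-C-3}, Lemma~\ref{lem-unital-scalar-order-3}, and Lemma~\ref{lem-for-n-divides-char}. The sufficiency direction is routine in each case: for a unital conjugation $X\mapsto PXP^{-1}$ (or with transpose), preservation of $\cB_3$ and $\cC_3$ is immediate since similarity preserves the number of involution factors and the transpose of a product of involutions is again such a product; for $X\mapsto PXQ$, one writes $PXQ = PQ\,(Q^{-1}XQ)$ and uses that $PQ\in\cD_3$ (because $\det(PQ)=\pm1$ and by \cite{GHR} every such matrix lies in $\cD_3$), together with the fact that $\cD_3$ is closed under left multiplication by elements of $\cD_3$. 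So the substance is the necessity direction.

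For necessity, I would first invoke Lemma~\ref{lem-main-all-form}: since $\mathrm{char}(\IF)=2$ divides $n=3$ is false ($3$ is odd), we get a scalar $r\in\IF$ with $r^6=1$ and an invertible $R$ such that the restriction of $T$ to $\mathfrak{sl}_3(\IF)$ is $X\mapsto rT(I)RXR^{-1}$ or $X\mapsto rT(I)RX^tR^{-1}$. Then Lemma~\ref{lem-unital-scalar-order-3} forces $r=1$ in all three cases ($\cS_3=\cB_3,\cC_3,\cD_3$). For part (i), I would additionally apply Lemma~\ref{lem-nonunital-C-3} to conclude $T(I)=I$, so that the restriction of $T$ to $\mathfrak{sl}_3(\IF)$ is exactly $X\mapsto PXP^{-1}$ or $X\mapsto PX^tP^{-1}$ with $P=R$. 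Since $n=3$ is a multiple of $\mathrm{char}(\IF)=2$? No --- $3$ is not a multiple of $2$, so in fact every $Z\in M_3(\IF)$ decomposes as $Z = aI + X$ with $a\in\IF$ (take $a = \tfrac13\mathrm{tr}(Z)$, noting $3$ is invertible since $\mathrm{char}(\IF)=2$) and $X\in\mathfrak{sl}_3(\IF)$. Hence $T$ is determined on all of $M_3(\IF)$ by its values on $I$ and on $\mathfrak{sl}_3(\IF)$, and we directly obtain $T(Z) = PZP^{-1}$ or $PZ^tP^{-1}$. This means Lemma~\ref{lem-for-n-divides-char} is actually not needed here, which simplifies the argument. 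For part (ii), I would note that for $\cC_3$ with $|\IF|=2$ we have $\cC_3=\cD_3$ by Remark~\ref{rem_C_n}, so both subcases reduce to $T(\cD_3)\subseteq\cD_3$; then following Case~(c) of the proof of Theorem~\ref{thm-main}, set $P=T(I)R^{-1}$ so that $T(I)=PR$ with $\det(PR)=\det(T(I))=\pm1=1$ (the last equality since $\mathrm{char}(\IF)=2$), and the restriction of $T$ to $\mathfrak{sl}_3(\IF)$ is $X\mapsto PXR$ or $X\mapsto PX^tR$; again using the decomposition $Z=aI+X$ we extend to all of $M_3(\IF)$, obtaining $T(Z)=PZQ$ or $PZ^tQ$ with $Q=R$.

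The main obstacle, and the point requiring the most care, is establishing $T(I)=I$ in part (i) --- but this is precisely the content of Lemma~\ref{lem-nonunital-C-3}, which is already proved in the excerpt, so in the write-up it is merely a citation. A secondary subtlety is verifying that the two-sided form $X\mapsto PXQ$ in part (ii) cannot be collapsed to a conjugation: this is genuine, since $\cD_3$ (equivalently $\cC_3$ when $|\IF|=2$) is strictly larger than $\cB_3$ and is not merely conjugation-invariant but invariant under multiplication by the whole of $\cD_3$, so the extra freedom in choosing $P,Q$ independently (subject only to $\det(PQ)=1$) is real and must be retained in the statement. I would also double-check the edge point that in part~(i) for $\cC_3$ we require $|\IF|>2$: when $|\IF|=2$ the hypothesis $T(\cC_3)\subseteq\cC_3$ gives no extra information beyond $T(\cD_3)\subseteq\cD_3$ (since $\cC_3=\cD_3$ there), which is exactly why that case is shunted into part~(ii). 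Modulo these observations, the proof is a direct assembly of the cited lemmas along the lines of the proof of Theorem~\ref{thm-main}, and I would keep the exposition terse, referring back to Case~(a)–(c) of that proof rather than repeating the basis-expansion argument in full.

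\begin{proof}
The sufficiency in both parts is straightforward: for a map of the form $X\mapsto PXP^{-1}$ or $X\mapsto PX^tP^{-1}$, preservation of $\cB_3$ and $\cC_3$ follows because similarity and transposition each preserve the minimal number of involution factors; for $X\mapsto PXQ$ with $\det(PQ)=1$, write $PXQ=PQ(Q^{-1}XQ)$ and use that $PQ\in\cD_3$ by \cite{GHR} together with $Q^{-1}\cD_3Q\subseteq\cD_3$ and closure of $\cD_3$ under left multiplication by $\cD_3$; for $|\IF|=2$ we have $\cC_3=\cD_3$ by Remark~\ref{rem_C_n}. We now prove necessity.

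Let $\cS_3=\cB_3$, $\cC_3$, or $\cD_3$, and let $T\colon M_3(\IF)\to M_3(\IF)$ be a bijective linear map with $T(\cS_3)\subseteq\cS_3$. By Lemma~\ref{lem-main-all-form} there exist a scalar $r\in\IF$ and an invertible matrix $R\in M_3(\IF)$ such that the restriction of $T$ to $\mathfrak{sl}_3(\IF)$ has one of the forms $X\mapsto rT(I)RXR^{-1}$ or $X\mapsto rT(I)RX^tR^{-1}$. By Lemma~\ref{lem-unital-scalar-order-3}, $r=1$. Since $\mathrm{char}(\IF)=2$ and $3$ is odd, the scalar $3=1$ is invertible in $\IF$, so every $Z\in M_3(\IF)$ can be written as $Z=aI+X$ with $a=3^{-1}\,\mathrm{tr}(Z)\in\IF$ and $X=Z-aI\in\mathfrak{sl}_3(\IF)$. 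Hence $T$ is completely determined by $T(I)$ together with its restriction to $\mathfrak{sl}_3(\IF)$.

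For part~(i), suppose $T(\cB_3)\subseteq\cB_3$, or $T(\cC_3)\subseteq\cC_3$ with $|\IF|>2$. By Lemma~\ref{lem-nonunital-C-3}, $T(I)=I$. Setting $P=R$, the restriction of $T$ to $\mathfrak{sl}_3(\IF)$ has the form $X\mapsto PXP^{-1}$ or $X\mapsto PX^tP^{-1}$, and since $T(I)=I=PIP^{-1}$, the decomposition $Z=aI+X$ above yields $T(Z)=PZP^{-1}$ or $T(Z)=PZ^tP^{-1}$ for all $Z\in M_3(\IF)$.

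For part~(ii), suppose $T(\cC_3)\subseteq\cC_3$ with $|\IF|=2$, or $T(\cD_3)\subseteq\cD_3$. In the first case $\cC_3=\cD_3$ by Remark~\ref{rem_C_n}, so in either case $T(\cD_3)\subseteq\cD_3$; in particular $A:=T(I)\in\cD_3$, so $\det(A)=\pm1=1$ as $\mathrm{char}(\IF)=2$. Put $P=AR^{-1}$ and $Q=R$. Then $T(I)=A=PQ$ with $\det(PQ)=1$, and the restriction of $T$ to $\mathfrak{sl}_3(\IF)$ has the form $X\mapsto PXQ$ or $X\mapsto PX^tQ$. Using the decomposition $Z=aI+X$ once more gives $T(Z)=PZQ$ or $T(Z)=PZ^tQ$ for all $Z\in M_3(\IF)$. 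This completes the proof.
\end{proof}
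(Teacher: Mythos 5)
Your proof is correct and follows essentially the same route as the paper: Lemma~\ref{lem-main-all-form} for the form of $T$ on $\mathfrak{sl}_3(\IF)$, Lemma~\ref{lem-unital-scalar-order-3} for $r=1$, Lemma~\ref{lem-nonunital-C-3} for $T(I)=I$ in part~(i), and the decomposition $Z=\mathrm{tr}(Z)\,I+X$ with $X\in\mathfrak{sl}_3(\IF)$ (valid since $3\neq 0$ when $\mathrm{char}(\IF)=2$) to extend from $\mathfrak{sl}_3(\IF)$ to all of $M_3(\IF)$, exactly as the paper's ``similar argument as in Theorem~\ref{thm-main}.'' The only blemish is a harmless relabelling slip in part~(ii): since the restriction is $X\mapsto ARXR^{-1}$, you should take $Q=R^{-1}$ and $P=AR$ (so that $PXQ=ARXR^{-1}$), rather than $Q=R$ and $P=AR^{-1}$ as written.
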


\begin{proof}
	Since $\char(\IF)=2$, Remark~\ref{rem-order-3-char-2} implies that $\cB_3 \subsetneq \cC_3 \subsetneq \cD_3$ for $|\IF|>2$, and $\cB_3 \subsetneq \cC_3=\cD_3$ for $|\IF|=2$. Moreover, every matrix in $M_3(\IF)$ can be written as the sum of a scalar matrix and a trace-zero matrix. The proof now follows from Lemmas~\ref{lem-main-all-form}, \ref{lem-nonunital-C-3}, and \ref{lem-unital-scalar-order-3}, together with a similar argument as in Theorem~\ref{thm-main}.
\end{proof}

\section{Future directions}\label{sec-remarks}

Below we outline several potential directions for future research.

\begin{enumerate}[label={\normalfont(\Alph*)}]
	
	\item 
	It would be interesting to determine the structure of bijective linear maps $T: M_n(\IF)\rightarrow M_n(\IF)$ satisfying $T(\cC_n) \subseteq \cC_n$ in the case where $\cC_n \neq \cD_n$ and $n>3$. In Theorem~\ref{thm-main}, we characterized such maps under the additional assumption that $T$ is unital (i.e., $T(I)=I$). The characterization of the non-unital case is given in Theorem~\ref{thm-non-unital-C_n} in terms of the set $\Lambda_n(\IF)$ defined in \eqref{eq-set-lambda}. 
	
	The structure of this set is examined in Remark~\ref{rem-struct-lamba-three} for the case $\cC_3 \neq \cD_3$, where it is used to derive Theorem~\ref{thm-prod-3-not2}. Based on our investigations, we conjecture that $\Lambda_n(\IF)=\pm I$ whenever $\cC_n \neq \cD_n$. In other words, if $\cC_n \neq \cD_n$, then any bijective linear map $T$ satisfying $T(\cC_n) \subseteq \cC_n$ must satisfy $T(I)=\pm I$.
	
	\item 
	The bijective linear preservers of products of involutions in $M_n(\IF)$, where $\IF$ is a field with $\mathrm{char}(\IF)=2$, were characterized in Theorems~\ref{thm-main-2} and~\ref{thm-main-order-3-char-2} for $n=2$ and $n=3$, respectively. Investigating this problem further for $n>3$ and $\mathrm{char}(\IF)=2$ would be an interesting direction for future research.
	
	\item 
	Another possible direction for further study is to relax the bijectivity assumption in Theorem~\ref{thm-main}.
	
	\item 
	A related research direction arises in the context of $\mathcal{B}(\mathcal{H})$, the algebra of bounded linear operators on an infinite-dimensional complex Hilbert space $\mathcal{H}$. Halmos and Kakutani (see \cite{HK}) showed that every unitary in $\mathcal{B}(\mathcal{H})$ can be expressed as a product of four symmetries (i.e., unitary involutions), and bijective continuous linear maps on $\mathcal{B}(\mathcal{H})$ preserving such products have been classified in \cite[Corollary~1]{Ra}. This naturally leads to the problem of classifying bijective continuous linear maps on $\mathcal{B}(\mathcal{H})$ that preserve products of two or three symmetries. More broadly, analogous preserver problems arise in the setting of factors (von Neumann algebras with trivial center), motivated by Fillmore’s results for type ${\rm II}_\infty$ and ${\rm III}$ factors (see \cite{Fi}), and recent work by Bhat et al.\ on products of symmetries in type ${\rm II}_1$ factors (see \cite{BNS}).
	
	\item 
	In this paper, we investigated the problem of characterizing linear preservers of the set of products of two involutions (i.e., strongly reversible or bireflectional elements) in $M_n(\IF)$. The classification of such elements has been studied extensively for many classical matrix groups; see \cite{OS} for a survey. This naturally leads to the question of characterizing bijective linear maps on $M_n(\IF)$ that preserve these sets.
	
\end{enumerate}

\bigskip
\noindent
{\bf \Large Acknowledgements}\nopagebreak

The authors thank the referee for a careful reading of the manuscript and for insightful comments and suggestions that significantly improved the quality and presentation of the paper. The authors are grateful to Rohit Dilip Holkar (IISER Bhopal) for organizing the ``2024 AESIM School on Linear Preserver Problems,'' where this project began. They also thank Allen Herman (University of Regina) for valuable insights and discussions. 

C.-K. Li is an affiliate member of the Institute for Quantum Computing, University of Waterloo; his research is supported by the Simons Foundation (Grant No.\ 851334). T. Lohan acknowledges financial support from the IIT Kanpur Postdoctoral Fellowship. S. Singla’s research is supported by the University of Regina’s Faculty of Science and by the Pacific Institute for the Mathematical Sciences (PIMS) Postdoctoral Fellows Program.

\medskip\noindent
(Li) Department of Mathematics, College of William \& Mary, Williamsburg, VA 23187, USA.
E-mail: ckli@math.wm.edu

\medskip\noindent
(Lohan)
Department of Mathematics and Statistics, Indian Institute of Technology Kanpur, Kanpur 208016, India.
E-mail: tejbirlohan70@gmail.com

\medskip\noindent
(Singla) 
The Department of Mathematics and Statistics, University of Regina, Canada. 
E-mail: ss774@snu.edu.in

\end{document}